\title{The Boolean Compactness Theorem for $\mathrm{L}_{\infty\infty}$}
\author{Juan M Santiago Suárez, Matteo Viale}
\date{\today}
\newcommand{\Linf}{\ensuremath{\mathrm{L}_{\infty \omega}}}
\newcommand{\Linff}{\ensuremath{\mathrm{L}_{\infty \infty}}}
\theoremstyle{plain}
	\newtheorem{Theorem}{Theorem}
	\newtheorem{theorem}{Theorem}[section]
	\newtheorem{proposition}[theorem]{Proposition}
	\newtheorem{lemma}[theorem]{Lemma}
	\newtheorem{corollary}[theorem]{Corollary}
	\newtheorem{fact}[theorem]{Fact}
	\newtheorem{question}[theorem]{Question}
\theoremstyle{definition}
	\newtheorem{Definition}{Definition}
	\newtheorem{definition}[theorem]{Definition}
	\newtheorem{notation}[theorem]{Notation}
	\newtheorem{example}[theorem]{Example}
\theoremstyle{remark}
	\newtheorem{remark}[theorem]{Remark}
\DeclareMathOperator{\RO}{RO}
\newcommand{\bool}[1]{\mathsf{#1}}
\newcommand{\Qp}[1]{\left\llbracket #1 \right\rrbracket}
\newcommand{\bp}[1]{\left\lbrace #1 \right\rbrace}
\newcommand{\rao} {\rightarrow}
\newcommand{\Lrao} {\Leftrightarrow}
\newcommand{\Rao} {\Rightarrow}
\newcommand{\MM}{\ensuremath{\text{{\sf MM}}}}
\begin{document}

 \maketitle

\begin{abstract}
We show that, contrary to the commonly held view, there is a natural and optimal compactness theorem for 
$\mathrm{L}_{\infty\infty}$ which generalizes the usual compactness theorem for first order logic. The key to this result is the switch from Tarski semantics to Boolean valued semantics. On the way to prove it, we also show that the latter is a (the?) natural semantics both for $\mathrm{L}_{\infty\infty}$ and for $\mathrm{L}_{\infty\omega}$.
\end{abstract}

\thanks{
The second author acknowledges support from INDAM through GNSAGA and from the project: \emph{PRIN 2022 Models, sets and classifications, prot. 2022TECZJA}.
The first author acknowledges support from the project \emph{VINCI 2021 Chapitre 2 - AIDES A LA MOBILITE POUR THESES EN COTUTELLE C2-99}.

\textbf{MSC:} \emph{03C75, 03E40,03B50.} \textbf{Keywords:} \emph{Infinitary Logics, Compactness, Forcing, Consistency Properties.} 

We thank Boban Velickovic (who outlined us the relevance of consistency properties), Ben de Bondt (for many useful comments), and the referee.}

\tableofcontents

\section{Introduction}

The compactness theorem for first order logic is the cornerstone result over which the machinery of model theory develops. In the current paper we generalize this result to its optimal version for the infinitary logics $\mathrm{L}_{\infty\infty}$ and $\mathrm{L}_{\infty\omega}$. This has to be taken with a grain of salt, as it is well known that the literal generalization of the compactness theorem to these logics is false (see Example \ref{faicom}). 
To overcome this obstacle we revise the semantics and move from Tarski semantics to \emph{Boolean valued} semantics (see Section \ref{subsec:boolvalsem} for details); we also revise appropriately the notion of finitely consistent theory  to that of \emph{finitely conservative} theory (see the key Def. \ref{def:finitecons0}). Theorem \ref{thm:boolcomp0} shows that any $\mathrm{L}_{\infty\infty}$-theory is consistent for Boolean valued semantics (from now on Boolean consistency) if and only if it is logically equivalent to a finitely conservative theory. 
Hence, finite conservativity stands to Boolean consistency for $\mathrm{L}_{\infty\infty}$ much alike finite consistency stands to consistency for first order logic. In fact, in a very precise sense, we can show that Thm. \ref{thm:boolcomp0} is a generalization of first order compactness (see Theorem \ref{thm:focompmain}).

We already showed that Boolean valued models provide an appropriate semantics for infinitary logics in \cite{MatteoJuanBoolean} (where we established natural versions of completeness, interpolation, Beth-definability, and omitting types for the infinitary logics $\mathrm{L}_{\infty\infty}$ and $\Linf$ relative to this semantics). The current paper closes the ring adding on top of the above results this optimal compactness theorem for $\mathrm{L}_{\infty\infty}$ (and $\Linf$) relative to this semantics. 

We will expand on the comparison between Tarski semantics and Boolean valued semantics and on the merits of the latter in Section \ref{sec:boolvalsemvstarskisem}. Let us rightaway note that the naturalness and usefulness of the latter follows already from the following observations: a Tarski consistent (i.e. admitting a Tarski model) theory is also Boolean consistent (while the converse fails in general for uncountable $\mathrm{L}_{\infty\infty}$-theories); for countable $\mathrm{L}_{\omega_1\omega}$-theories, Tarski consistency, Boolean consistency, and finite conservativity are equivalent properties (up to logical equivalence).

A strong point of the present paper (which to a large extent applies also to \cite{MatteoJuanBoolean}) is (for us) that we can obtain all these results by elementary means: the proofs require tools available to any scholar who has had a general course in logic covering the usual results for first order logic (completeness, compactness) and has some familiarity with Boolean algebras and compactness arguments in general topology.\footnote{Some of the results in \cite{MatteoJuanBoolean} on the other hand require a certain familiarity with the forcing method in set theory.}
As already outlined, the main idea guiding our work (here and in \cite{MatteoJuanBoolean}) is the shift from Tarski semantics to Boolean valued semantics in the analysis of infinitary logics. Once one gets accustomed to the different semantical framework, all proofs flow naturally as the expected outcome of the right definitions.
Note also that Boolean valued models compare to ordinary Tarski models, much alike (pre)sheaves of rings compare to rings, hence the semantical shift we perform here and in \cite{MatteoJuanBoolean} is natural, as it reflects in the field of infinitary logics the same pattern which led algebraic geometers to the development of sheaf theory in their quest for the most general formulation of the notion of algebraic variety.

We will detail with more precision the above scant general considerations in Section \ref{sec:boolvalsemvstarskisem}.\medskip

The infinitary logic $\mathrm{L}_{\kappa\lambda}$ admits as formulae those constructed from the atomic formulae of the signature $\mathrm{L}$ using negation, conjunctions and disjunctions of size less than $\kappa$, and blocks  of quantifiers $\exists(x_i:i<\gamma)$, $\forall(x_i:i<\gamma)$  on a string of variables indexed by some $\gamma<\lambda$. The $\mathrm{L}_{\kappa\lambda}$-formulae can only have less than $\lambda$-many free variables.\footnote{Hence infinitary disjunctions/conjunctions on a set of $\mathrm{L}_{\kappa\lambda}$-formulae are allowed only if the formulae have all their free variables occurring in a fixed set of size less than $\lambda$.}

The logic $\mathrm{L}_{\infty\lambda}$ is the union of the logics $\mathrm{L}_{\kappa\lambda}$ for $\kappa$ a cardinal; the logic $\mathrm{L}_{\infty\infty}$ is the union of the logics $\mathrm{L}_{\kappa\lambda}$ for $\kappa,\lambda$ cardinals. Our focus in this paper is mainly on the logics $\mathrm{L}_{\infty\omega}$ and $\mathrm{L}_{\infty\infty}$.

We now state the main definitions and results, i.e. the compactness theorem for $\mathrm{L}_{\infty\infty}$ and its corollaries. This requires to revise the notion of finitely consistent theory, which is central in the standard formulation of the compactness theorem for first order logic. The following two definitions introduce the key concepts leading to a revised notion of finitary consistency.
\begin{Definition} \label{Def:consstrength}
Let $\psi_0,\psi_1$ be $\mathrm{L}_{\infty\infty}$-sentences such that $\psi_1\vdash \psi_0$.\footnote{$\vdash$ refers to the natural generalization to $\mathrm{L}_{\infty\infty}$ of the usual notion of provability, see Section \ref{subsec:proofsystem} below for its formal definition.} 
$\psi_1$ is a \emph{conservative strengthening of $\psi_0$} if for all finite sets $s$ of subsentences\footnote{Note that by a subsentences of an $\mathrm{L}$-sentence $\psi$ we mean any sentence obtained from a subformula of $\psi$ by substituting its free variables with constants in (an arbitrary expansion of) $\mathrm{L}$, see for details Notation \ref{not:subformulaeconprop}.} of $\psi_0$, $\psi_0\wedge\bigwedge s$ is Boolean consistent if and only if so is $\psi_1\wedge\bigwedge s$.
\end{Definition}

Essentially, the above definition isolates those strengthenings of $\psi_0$ which are not able to tell apart from $\psi_0$ those properties expressible in the language of $\psi_0$ using subsentences of $\psi_0$ which are consistent with $\psi_0$.

Finite conservativity defined below is the new central concept  replacing finite consistency in the formulation of the compactness theorem for $\mathrm{L}_{\infty\infty}$.

\begin{Definition}\label{def:finitecons0}
Let $\{\psi_i:i\in I\}$ be a family of $\mathrm{L}_{\infty\infty}$-sentences. $\{\psi_i:i\in I\}$ is \emph{finitely conservative} if:
\begin{itemize} 
\item
at least one $\psi_i$ is Boolean consistent (see Definition \ref{def:BolCon}), and
\item
for all $u\subseteq v$ non-empty finite subsets of $I$, $\bigwedge_{i\in v}\psi_i$ is a conservative strengthening of $\bigwedge_{i\in u}\psi_i$ .
\end{itemize}
\end{Definition}

A trivial observation is the following: $T$ is Boolean consistent if and only if $\bp{\bigwedge T}$ is finitely conservative, henceforth any Boolean consistent theory is logically equivalent to a finitely conservative one.

Note that the following family of sentences is a finitely consistent (but not finitely conservative) $\mathrm{L}_{\omega_1\omega}$-theory which is not Boolean consistent:
\[
\mathrm{T} = \{ \bigvee_{n < \omega} c_\omega = c_n \}\cup \ \{c_n \neq c_m : n <m\leq \omega \}.
\]	
Let $v = \{\bigvee_{n < \omega} c_\omega = c_n, c_0 \neq c_\omega\}$, and $u=\bp{\bigvee_{n < \omega} c_\omega = c_n}$, then $c_0 = c_\omega$
is consistent with $\bigvee_{n < \omega} c_\omega = c_n$ (which is $\bigwedge u$), but not with $\bigwedge v$, hence $\bigwedge v$ is not a conservative strengthening of $\bigwedge u$ (see Example \ref{faicom} for details).

We can now state the Boolean Compactness theorem for $\mathrm{L}_{\infty\infty}$:
 \begin{Theorem}[Boolean Compactness Theorem for $\mathrm{L}_{\infty\infty}$]\label{thm:boolcomp0}
      Let $\{\psi_i:i\in I\}$ be a finitely conservative family of $\mathrm{L}_{\infty\infty}$-sentences.

Then $\bigwedge_{i \in I} \psi_i$ is Boolean consistent and is a conservative strenghtening of $\bigwedge_{i\in u}\psi_i$ for all $u$ non-empty and finite subset of $I$.

Furthermore, if each $\psi_i$ is an $\mathrm{L}_{\infty\omega}$-sentence, 
$\bigwedge_{i \in I} \psi_i$  has a mixing model (see Definition \ref{def:mixingmodel}).\footnote{See Section \ref{sec:boolvalsemvstarskisem} to appreciate the difference between being Boolean consistent and admitting a mixing model witnessing it.}

	\end{Theorem}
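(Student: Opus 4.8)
\emph{Plan and reductions.} I would first reduce the statement to the single assertion that $\bigwedge_{i\in I}\psi_i$ is Boolean consistent, and then exhibit a witnessing Boolean valued model as a completed direct limit of Lindenbaum--Tarski algebras, using the completeness theorem for $\Linff$ of \cite{MatteoJuanBoolean} as a black box. For the reduction, note that in a family closed under finite conjunctions any member implying another is automatically a conservative strengthening of it (apply Definition~\ref{def:finitecons0} to the pair and use that the conjunction of the two is logically equivalent to the stronger one); combining this with directedness, for each fixed $i_0$ and each finite set $s$ of subsentences of $\psi_{i_0}$ the family $\{\psi_i\wedge\bigwedge s:\psi_i\vdash\psi_{i_0}\}$ is again finitely conservative, so the conservativity clause of the theorem reduces to its consistency clause applied to these subfamilies. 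It therefore suffices to produce a Boolean valued model of $\bigwedge_i\psi_i$. Re-indexing by the non-empty finite subsets of $I$ (with the corresponding finite conjunctions) I may assume the family is directed by $\vdash$; passing to Henkin expansions $\psi_i^{H}$ in a fixed language $\mathrm{L}^{+}=\mathrm{L}\cup C$, with $C$ a set of fresh constants closed under witnesses for the existential subformulas of the $\psi_i$, I may assume each $\psi_i$ proves its own Henkin axioms (that the expanded family stays finitely conservative and that Boolean consistency of the conjunction is unchanged up to reduct uses that Boolean consistent theories admit mixing models, so the fresh constants can always be interpreted); and by the first clause of Definition~\ref{def:finitecons0} together with the observation above, I may assume every $\psi_i$ is itself Boolean consistent.

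\emph{The colimit of Lindenbaum algebras.} Fix a set $\mathcal{F}$ of $\mathrm{L}^{+}_{\infty\infty}$-sentences containing all atomic $\mathrm{L}^{+}$-sentences and all (generalized) subsentences of all the $\psi_i$, closed under finite Boolean combinations and under the infinitary disjunctions, conjunctions and quantifier instances that actually occur in the $\psi_i$. For each $i$ let $A_i=\mathcal{F}/{\equiv_{\psi_i}}$ be the Lindenbaum--Tarski algebra ordered by $\psi_i$-provable implication; it is a non-trivial Boolean algebra because $\psi_i$ is consistent, and the proof rules for infinitary $\bigwedge$, $\bigvee$ together with the Henkin axioms show that inside $A_i$ the class of each such subsentence is the appropriate supremum or infimum of the classes of its constituents. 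For $\psi_j\vdash\psi_i$ the tautological map $[\theta]_{\psi_i}\mapsto[\theta]_{\psi_j}$ is a Boolean homomorphism, and these form a directed system; put $A_\infty=\varinjlim_i A_i$. Since no $A_i$ is trivial, $A_\infty$ is non-trivial, but — as the theory $\mathrm{T}$ of Example~\ref{faicom} shows — this alone is useless: there the class of $\bigvee_{n}c_\omega=c_n$ stays equal to $1$ in $A_\infty$ while every disjunct collapses to $0$, so the canonical valuation would violate the clause for infinitary disjunction. The real issue is whether the \emph{named} infinitary suprema and infima survive in the colimit.

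\emph{The key lemma, where finite conservativity is used.} I would prove: for every disjunction $\bigvee_{n}\chi_n$ occurring in some $\psi_i$ one has $[\bigvee_n\chi_n]_{A_\infty}=\bigvee_n[\chi_n]_{A_\infty}$ in $A_\infty$, and dually for conjunctions (the quantifier case being handled instead through the Henkin axioms and monotonicity alone). Only the inequality "$\le$" is at stake. If $[\eta]_{A_\infty}$, with $\eta\in\mathcal F$, is an upper bound of the $[\chi_n]_{A_\infty}$, then unwinding the colimit yields, for each $n$, an index $k_n$ with $\psi_{k_n}\vdash\chi_n\to\eta$, the $k_n$ being a priori unbounded in the directed set. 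Now choose a single member $\psi_{k_0}$ of the family of which $\bigvee_n\chi_n$ and all the finitely many subsentences occurring in $\eta$ are subsentences — possible by directedness and closure under finite conjunction — so that, via disjunctive normal form and the conventions of Notation~\ref{not:subformulaeconprop}, each $\chi_n\to\eta$ is a Boolean combination of subsentences of $\psi_{k_0}$. Picking $m$ above $k_n$ and $k_0$, the sentence $\psi_m$ proves $\chi_n\to\eta$ and is a conservative strengthening of $\psi_{k_0}$; and conservativity of $\psi_m$ over $\psi_{k_0}$ forces $\psi_{k_0}$ and $\psi_m$ to prove exactly the same Boolean combinations of subsentences of $\psi_{k_0}$, whence $\psi_{k_0}\vdash\chi_n\to\eta$ — now \emph{uniformly in $n$}. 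The infinitary disjunction rule then gives $\psi_{k_0}\vdash(\bigvee_n\chi_n)\to\eta$, i.e.\ $[\bigvee_n\chi_n]_{A_\infty}\le[\eta]_{A_\infty}$. This passage from "consistent with every member of the family over the common sublanguage" to "provable at one finite stage", which is exactly the role of finite conservativity, is the step I expect to be the main obstacle to carry out carefully — in particular the bookkeeping with negated subsentences, and the need to isolate the purely-quantifier part of the induction and route it through the Henkin constants rather than through conservativity.

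\emph{Conclusion.} Let $\mathbb{B}_\infty$ be the MacNeille completion of $A_\infty$, a non-trivial complete Boolean algebra in which, since MacNeille completions preserve all existing suprema and infima, the key lemma guarantees that every named $\bigvee$, $\bigwedge$, $\exists$ is still computed as the sup or inf of the values of its constituents. Let $M_\infty$ be the $\mathbb{B}_\infty$-valued structure with domain the closed $\mathrm{L}^{+}$-terms and atomic values $\llbracket R(\bar t)\rrbracket=[R(\bar t)]_{A_\infty}$, $\llbracket t=t'\rrbracket=[t=t']_{A_\infty}$; the equality axioms being logical validities, $M_\infty$ is a legitimate Boolean valued model, and an induction on $\phi\in\mathcal F$ — atomic and finitary steps immediate, the infinitary and quantifier steps being precisely the key lemma and the Henkin axioms — gives $\llbracket\phi\rrbracket_{M_\infty}=[\phi]_{A_\infty}$ for all $\phi\in\mathcal F$. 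In particular $\llbracket\psi_i\rrbracket_{M_\infty}=1$ for every $i$, so $\llbracket\bigwedge_i\psi_i\rrbracket_{M_\infty}=1$ and $\bigwedge_i\psi_i$ is Boolean consistent; with the first reduction this also settles the conservativity clause. For the final assertion, when every $\psi_i$ is an $\Linf$-sentence so is $\bigwedge_i\psi_i$, and one runs the same construction with mixing term models at each stage (available by the completeness theorem for $\Linf$ in \cite{MatteoJuanBoolean}): closing the domain of $M_\infty$ under mixtures makes it a mixing model, and since all quantifier blocks are finite this does not change the value of any $\mathrm{L}^{+}$-sentence, so $M_\infty$ is the desired mixing model of $\bigwedge_i\psi_i$.
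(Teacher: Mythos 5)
Your overall architecture (a directed colimit of Lindenbaum--Tarski algebras followed by a MacNeille completion) is genuinely different from the paper's, which builds the witness via a consistency property: $S$ consists of the sets $\{\Psi\}\cup t$ with $t$ a finite Boolean consistent set of subsentences of a single $\psi_{i_t}\in t$, and finite conservativity is used exactly once, to verify the clause for the infinitary conjunct $\Psi$. Unfortunately your route has a fatal gap: the ``key lemma'' is false, and so is the step you use to derive it. For the step, take $\psi_{k_0}=\bigvee_{n}(c=c_n)$ and $\psi_m=\psi_{k_0}\wedge(c=c_0)$. Every finite set $s$ of subsentences of $\psi_{k_0}$ (these are just $\psi_{k_0}$ and the equalities $c=c_n$) is consistent with both sentences (interpret all constants equally), so $\psi_m$ is a conservative strengthening of $\psi_{k_0}$; yet $\psi_m\vdash c=c_0$ while $\psi_{k_0}\not\vdash c=c_0$. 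Definition~\ref{constre} quantifies only over subsentences, not over their negations, so your DNF argument breaks exactly at the negated literals you flagged as ``bookkeeping'': conservativity controls which positive combinations are \emph{consistent} with the base, not which Boolean combinations are \emph{provable} from the strengthening.

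The lemma itself fails, not just this proof of it. Let $\mathrm{L}$ have constants $c,d,e,c_n$ ($n\in\omega$), let $\psi=\bigvee_n(c=c_n)$, $\rho_m=\bigwedge_{n<m}\bigl(\neg(c=c_n)\vee(d=e)\bigr)$, and let $T$ be the closure under finite conjunctions of $\{\psi\}\cup\{\psi\wedge\rho_m:m\geq 1\}$. This $T$ is finitely conservative: $\neg(d=e)$ is not a subsentence of any member, so any finite set of subsentences consistent with $\psi\wedge\rho_{m'}$ remains consistent after adjoining further conjuncts $\rho_m$, since one may always pass to a model in which $d=e$ holds. In $A_\infty$ one has $[c=c_n]\leq[d=e]$ for every $n$ (witnessed at stage $m=n+1$), while $[\bigvee_n(c=c_n)]=1$ and $[d=e]<1$ (each $\psi\wedge\rho_m$ is consistent with $d\neq e$, e.g.\ by interpreting $c$ as $c_m$). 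So $[d=e]$ is an upper bound of the disjuncts not lying above the disjunction: the named supremum is destroyed in the colimit, and your term model over $\mathbb{B}_\infty$ satisfies $\bigvee_n\Qp{c=c_n}\leq[d=e]<1=[\psi]$, hence is not a model of $\psi$. (The theorem is still true for this $T$; the correct model must make $d=e$ true, which the colimit --- faithfully recording that $d\neq e$ is consistent at every finite stage --- cannot do.) This is precisely why the paper works with consistency properties, where the witness for a disjunction is chosen locally, compatibly with a finite amount of accumulated information, instead of demanding that all named suprema survive a global limit. A secondary problem: your Henkinization of infinite quantifier blocks invokes mixing/fullness for $\mathrm{L}_{\infty\infty}$, which is unavailable (mixing models are full only for $\mathrm{L}_{\infty\omega}$, and Boolean consistent $\mathrm{L}_{\infty\infty}$-theories need not admit mixing models at all). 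Your opening reduction of the conservativity clause to the consistency clause applied to the subfamilies $\{\psi_i\wedge\bigwedge s\}$ is, by contrast, a correct and worthwhile observation.
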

We already noticed that any Boolean consistent theory $T$ is clearly the logical consequence of the theory
$\bp{\bigwedge T}$, which is easily seen to be finitely conservative. Hence,
an $\mathrm{L}_{\infty\infty}$-theory is Boolean consistent if and only if it is logically equivalent to a finitely conservative one.

The theorem generalizes first order compactness in view of the following (combined with Observation \ref{observation2} of Section \ref{sec:boolvalsemvstarskisem} yielding that Boolean consistent first order theories have a Tarski model):

\begin{Theorem}\label{thm:focompmain}
        Let $T$ be a finitely consistent first order theory. Then there exists an $\Linf$-theory $T^*$ such that :
        \begin{itemize}
        	\item $T^*$ is provably stronger than $T$, and 
        	\item $T^*$ is finitely conservative.
        \end{itemize}
        
	\end{Theorem}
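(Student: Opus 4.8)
The plan is to build $T^*$ from $T$ by adding, for every first order sentence $\varphi$ in the (possibly expanded) signature, a single sentence that decides $\varphi$ in a way that is forced by a maximal finitely consistent extension. More precisely, fix (using Zorn's lemma) a maximal finitely consistent first order theory $T^\infty \supseteq T$ in the signature $\mathrm{L}$. Since $T^\infty$ is a complete and finitely consistent first order theory, first order compactness gives it a Tarski model, hence by Observation \ref{observation2} it is Boolean consistent and every subsentence-level consistency question about members of $T^\infty$ is settled the same way whether or not we pass to the $\Linf$-conjunction. The natural candidate is then
\[
T^* = \Bigl\{\, \bigwedge T^\infty \,\Bigr\},
\]
or, if one wants $T^*$ to remain a set rather than rely on $\bigwedge$ over a proper class of sentences, the closure under finite conjunctions of $T^\infty$ itself (which is already a first order, a fortiori $\Linf$, theory). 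I would first check the two bulleted requirements for this choice and then, if the single-conjunction formulation causes a cardinality issue, fall back to the set-of-finite-conjunctions formulation.

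The first bullet, that $T^*$ is provably stronger than $T$, is immediate: every $\varphi \in T$ lies in $T^\infty$, so $\bigwedge T^\infty \vdash \varphi$ (and in the finite-conjunctions formulation each $\varphi \in T$ literally belongs to $T^*$). For the second bullet I must verify that $T^*$ is finitely conservative in the sense of Definition \ref{def:finitecons0}. When $T^* = \{\bigwedge T^\infty\}$ this reduces, by the ``trivial observation'' recorded right after Definition \ref{def:finitecons0}, to showing that $\bigwedge T^\infty$ is Boolean consistent; and that follows because $T^\infty$ has a Tarski model $M$ (first order compactness applied to the finitely consistent complete theory $T^\infty$), $M$ is a Tarski model of $\bigwedge T^\infty$, and Tarski consistency implies Boolean consistency. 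For the finite-conjunctions formulation I must additionally check the third clause: for a non-empty finite $u \subseteq T^\infty$ the conjunction $\bigwedge u$ must be a conservative strengthening of each $\psi \in u$, i.e.\ for each finite set $s$ of subsentences of $\psi$, $\psi \wedge \bigwedge s$ is Boolean consistent iff $\bigwedge u \wedge \bigwedge s$ is. Here the point is that all the sentences involved ($\psi$, the members of $u$, the members of $s$) are \emph{first order} sentences; so each of $\psi \wedge \bigwedge s$ and $\bigwedge u \wedge \bigwedge s$ is a finite first order theory, for which Boolean consistency coincides with Tarski consistency (Observation \ref{observation2}) and hence with finite consistency. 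Since $T^\infty$ is a complete finitely consistent first order theory, $T^\infty \cup \{\psi \wedge \bigwedge s\}$ and $T^\infty \cup \{\bigwedge u \wedge \bigwedge s\}$ are finitely consistent under exactly the same condition on $s$ (namely, that $\neg(\psi \wedge \bigwedge s)$ is not a first order consequence of a finite subset of $T^\infty$), which by completeness of $T^\infty$ is the same as $\psi \wedge \bigwedge s$ being consistent outright; and this is in turn the same as $\bigwedge u \wedge \bigwedge s$ being consistent, because $u \subseteq T^\infty$ so $\psi \wedge \bigwedge s$ and $\bigwedge u \wedge \bigwedge s$ are equiconsistent given $T^\infty$. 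Unwinding, the two Boolean consistency statements are equivalent, giving conservativity.

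The main obstacle I anticipate is purely bookkeeping rather than conceptual: making sure the ``subsentence'' relation is handled correctly. A subsentence of $\psi$ (Notation \ref{not:subformulaeconprop}) may use constants from an arbitrary expansion of the signature, so the set $s$ can involve fresh constants not occurring in $T^\infty$; I must confirm that the equivalence above is insensitive to adding such constants, which it is, because one may conservatively expand any Tarski model of a first order theory to interpret new constants, so Boolean/Tarski consistency of a first order theory is unaffected by the presence of unused constant symbols. A secondary, cosmetic obstacle is the choice between the two formulations of $T^*$: if the signature (hence the class of first order sentences) is a proper class one cannot literally form $\bigwedge T^\infty$, so I would state the theorem with the finite-conjunctions formulation, which is always a legitimate $\Linf$-theory, and only remark that when everything is a set the slicker single-axiom $T^* = \{\bigwedge T^\infty\}$ also works. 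Modulo these routine checks, the proof is exactly: extend $T$ to a complete finitely consistent $T^\infty$, take $T^*$ to be (the finite-conjunction closure of) $T^\infty$, and observe that for first order theories Boolean consistency $=$ finite consistency, so completeness of $T^\infty$ makes every relevant consistency question stable under strengthening, which is precisely finite conservativity.
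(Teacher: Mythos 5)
There are two genuine problems with your proposal. The first is circularity with respect to the purpose of the statement. You establish Boolean consistency of $T^\infty$ by writing ``first order compactness gives it a Tarski model.'' But this theorem exists precisely so that, combined with the Boolean Compactness Theorem and the fact that Boolean consistent first order theories have Tarski models, one can \emph{derive} first order compactness (this is how the paper uses it in Corollary \ref{thm:concompfirord}, whose conclusion is ``hence $T$ is Tarski consistent''). If the construction of $T^*$ already invokes first order compactness, the derivation collapses to a triviality: finitely consistent $\Rightarrow$ has a model $\Rightarrow$ Boolean consistent $\Rightarrow$ $\bp{\bigwedge T}$ is finitely conservative. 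The paper's proof avoids this by working purely syntactically: extend $T$ (over a language enriched with countably many fresh constants) to a \emph{maximal coherent} theory $T_0$ for the proof system of Section \ref{subsec:proofsystem} — a Zorn's lemma argument needing only the correctness theorem, not the existence of any model — and then apply Theorem \ref{thm:comthe} to the syntactically complete $T_0$. The model is then \emph{produced} by the Boolean Compactness Theorem, not assumed.

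The second problem is that your fallback candidate, the closure under finite conjunctions of $T^\infty$ itself, is simply not finitely conservative, and your verification contains the error. You claim that consistency of $T^\infty\cup\{\psi\wedge\bigwedge s\}$ is, by completeness of $T^\infty$, ``the same as $\psi\wedge\bigwedge s$ being consistent outright.'' It is not: take $\psi=\forall x\,(\neg P(x)\vee Q(x))$ and $\chi=\forall x\,\neg P(x)$, both in $T^\infty$, and the subsentence $P(c)$ of $\psi$ for a fresh constant $c$. Then $\psi\wedge P(c)$ is consistent but $\psi\wedge\chi\wedge P(c)$ is not, so $\psi\wedge\chi$ is not a conservative strengthening of $\psi$; the definition demands outright equiconsistency of $\psi\wedge\bigwedge s$ and $\bigwedge u\wedge\bigwedge s$, not equiconsistency modulo $T^\infty$. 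This is exactly the gap that the $\phi\mapsto\phi^*$ saturation in Theorem \ref{thm:comthe} is designed to close: each $\phi^*$ conjoins to $\phi$ every subsentence of $\phi$ that $T_0$ proves and the negation of every subsentence that $T_0$ refutes, so that any subsentence consistent with $\phi^*$ is one that $T_0$ does not refute, and hence (by syntactic completeness) cannot become inconsistent upon conjoining further members of $T^*$. Your single-axiom variant $T^*=\bp{\bigwedge T^\infty}$ does satisfy the letter of the statement, but only via the first order compactness you assumed, which is the ``trivial solution'' the paper explicitly sets aside.
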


We can also outline in which ways the Boolean Compactness Theorem is related to Barwise compactness (see Section \ref{sec:opeque} for details), and exemplify how the notion of finite conservativity can be used to construct Boolean valued models which realize certain types and omit others (see Section \ref{subsec:appcomp} or \cite{SantiagoPhd} for details).

The rest of the paper is organized as follows:
\begin{itemize}
\item
The introductory Section \ref{sec:boolvalsemvstarskisem} gives a detailed comparison of the Boolean valued semantics and Tarski semantics for infinitary logics and of their merits and limits. Its content is not needed to follow the technical parts of the paper, but can serve as motivation and intuition for it. It is mathematically very light.
\item
Section \ref{sec:preliminaries} contains the necessary background material on infinitary logics, proof systems for them, Boolean valued models for them, and Consistency Properties (which are the tools to construct Boolean valued models of provably consistent infinitary theories).
\item
Section \ref{sec:compactness} contains the proof of the Boolean Compactness Theorem for $\mathrm{L}_{\infty\infty}$. It also presents a non-trivial application of the theorem. 
\item
Section \ref{FinConBooConTarCon} connects the above result to first order compactness and Barwise compactness. In particular, it shows that Tarski consistent $\mathrm{L}_{\kappa\lambda}$-theories, and syntactically complete $\mathrm{L}_{\kappa\lambda}$-theories are finitely conservative, and leverages on these results to give a proof of the first order compactness theorem which interpolates through the Boolean compactness theorem.
We also pose some questions on the relation between finite conservativity and Boolean consistency which we leave open.
\end{itemize}

\section{Tarski semantics vs Boolean valued semantics}\label{sec:boolvalsemvstarskisem}

This section serves only as a general motivation for the approach we take here and in \cite{MatteoJuanBoolean}
to the study of infinitary logics. Its content is not needed for the remainder of the paper.

The Tarski semantics for the sublogics of $\mathrm{L}_{\infty\infty}$ in an $\mathrm{L}$-stucture $\mathfrak{M}$ is defined as expected,  i.e.: atomic formulae get their assignment according to the usual rules, $\forall(x_i:i<\gamma)\,\psi$ holds in $\mathfrak{M}$ with assignment $\nu$ if for all $\vec{c}\in\mathfrak{M}^\gamma$, the formula $\psi$ holds in $\mathfrak{M}$ with the assignment $\nu_{\vec{c}}$ obtained from $\nu$ by replacing $\nu(x_i)$ with $\vec{c}(i)$ for all $i<\gamma$; $\bigwedge_{i\in I}\psi_i$ holds in $\mathfrak{M}$
with assignment $\nu$ 
if and only all the $\psi_i$ hold in $\mathfrak{M}$
with assignment $\nu$; accordingly one defines the semantics for the other logical constants.

Any usual proof system for first order logic can be naturally generalized to $\mathrm{L}_{\infty\infty}$. For example see the Gentzen type proof system we present below in Section \ref{subsec:proofsystem}.
It is  straightforward to establish the correctness theorem relative to the Tarski semantics for $\mathrm{L}_{\infty\infty}$, i.e.:
if there is a proof of the sequent $\Gamma\vdash\Delta$, then any $\mathrm{L}$-structure which realizes all formulae in $\Gamma$, realizes at least one formula in $\Delta$.
It can also be checked that the statement \emph{($P$ is a proof of $\Gamma\vdash\Delta$)}
is $\Delta_1(\mathsf{ZF})$.\footnote{For those unfamilliar with the notion of $\Delta_1(T)$-property, see \cite[Def. A.1.8]{VialeForcing} (alternatively see also \cite[Lemma 13.10]{JECHST}).} By a deeper result of Mansfield (the completeness theorem of this proof system for Boolean valued semantics \cite{MansfieldConPro}, see also below), it is also the case that the assertion \emph{($\Gamma\vdash\Delta$ has a proof)} is $\Delta_1(\mathsf{ZFC})$. This brings to light that this notion of proof for $\mathrm{L}_{\infty\infty}$ is set-theoretically simple: for example $\mathrm{L}_{\infty\infty}$-provable sequents form an absolutely definable class with respect to transitive models $M\subseteq N$ of $\mathsf{ZFC}$. 

However, a significant distinction between $\mathrm{L}_{\infty\infty}$ and $\mathrm{L}_{\infty\omega}$ occurs at the level of their Tarski semantics: for an $\mathrm{L}_{\infty\omega}$-sentence $\psi$, and an $\mathrm{L}$-structure $\mathfrak{M}$, the assertion 
$(\mathfrak{M}\models\psi)$ is $\Delta_1(\mathsf{ZF})$ in the parameters $\mathfrak{M},\mathrm{L},\psi$, hence absolute between transitive models of set theory having these relevant parameters.
The same does not occur for $\mathrm{L}_{\infty\infty}$: it is not hard to cook up a signature $\mathrm{L}$, an 
$\mathrm{L}_{\omega_1\omega_1}$-formula $\psi(x_n:n\in\omega)$ in displayed free variables, and an $\mathrm{L}$-structure $\mathfrak{M}$ such that the set theoretic universe $V$ models $\forall(x_n:n\in\omega)\,\psi(x_n:n\in\omega)$ holds in $\mathfrak{M}$, while the same assertion fails in a generic extension of $V$.\footnote{The sentence could formalize the notion of well order with uncountable cofinality, i.e its models are ordinals of uncountable cofinality.}

Another major issue of Tarski semantics is that the completeness theorem fails for the natural proof systems for $\mathrm{L}_{\infty\infty}$ relative to it; specifically there is an $\mathrm{L}_{\omega_2\omega}$-sentence $\psi$ which is provably consistent but does not hold in any Tarski model (in a suitable signature $\mathrm{L}$, $\psi$ asserts that there is a bijection of $\omega$ onto $\omega_1^V$).

So one is left with two options: either change the semantics or change the proof system.
It is the case that the main developments in the analysis of (the infinitary sublogics of)  $\mathrm{L}_{\infty\infty}$
followed the second path, i.e. most authors sticked to Tarski semantics, and either did not pay particular attention to syntactic proofs, or devised new proof systems which yield a completeness theorem for this semantics (e.g. this is the approach taken in \cite{MalitzThesis,MarkerInfLog}). Even if the advantages of Tarski semantics in the interpretation of meanings are notable, there are major drawbacks of this approach:
\begin{itemize}
\item The interpolation theorem fails \cite[Thm. 3.2.4]{MalitzThesis}. Note however that a major result of Shelah on infinitary logics is the isolation of a nice famiy of sublogics of $\mathrm{L}_{\infty\infty}$ for which the interpolation theorem holds relative to Tarski semantics \cite{SHE-INFLOG2012}.
\item The completeness theorem for Tarski semantics relative to natural proof systems fails. It
holds only relative to a proof system whose notion of proof is not absolute  between transitive models of $\mathsf{ZFC}$ \cite{MalitzThesis}.
\item The compactness theorem fails even for $\mathrm{L}_{\omega_1\omega}$, the smallest non-trivial fragment of $\mathrm{L}_{\infty\infty}$ (see Example \ref{faicom}). 
It is worth here to delve more into the compactness-like properties of sublogics of $\mathrm{L}_{\infty\infty}$ relative to Tarski semantics, as the picture is  variegate and there are various interesting sublogics of $\mathrm{L}_{\infty\infty}$ for which variations of compactness principles hold relative to this semantics. 
Limiting our attention to the main results stemming out of the work in the set theoretic and model theoretic community, we can mention several compactness theorems related to Tarski semantics. Let us recall Barwise compactness,  and a compactness result for $\omega$-logic due to Kreisel (and derivable from Barwise compactness): an overview of these results can be found in \cite[Thms. 3.1.1–3.1.5]{KeislerKnight}. In set theory, weakly and strongly compact cardinals emerged as natural generalizations of compactness for $\mathrm{L}_{\kappa\kappa}$ \cite[Ch. 4]{kanamori:higher_infinite} (whereas other large cardinal notions exhibit incompactness \cite{HanfIncompactness}). Furthermore, the study of generalized quantifiers has yielded additional compact logics \cite{ShelahCompact} (also for the logics of \cite{SHE-INFLOG2012} and for $\mathrm{L}_{\infty\omega}$ a weak analogue of compactness holds, namely the non-definability of well orders). More recently, compactness principles for omitting types have resurfaced as tools for characterizing very large cardinals \cite{BendaCompactness,BoneyOmit,BoneyOmit2}. 
However, compactness results for  infinitary sublogics of $\mathrm{L}_{\infty\infty}$ relative to Tarski semantics are sparse: it is most often the case that interesting infinitary logics are non-compact for this semantics (e.g. this is the case for $\mathrm{L}_{\infty\infty}$ and  $\mathrm{L}_{\infty\omega}$). In order to obtain "compact" logics for Tarski semantics, one has either to narrow the scope of the logics (e.g.  see \cite{KeislerKnight,ShelahCompact}) or revise/weaken the notion of compactness (e.g. -in one direction- the characterization of large cardinals in terms of compactness-like properties of $\mathrm{L}_{\kappa\kappa}$, as in \cite{kanamori:higher_infinite,BendaCompactness,BoneyOmit,BoneyOmit2}, or -in another direction- the non-definability of well orders, as in \cite{SHE-INFLOG2012}).
\end{itemize}

We address the  discrepancy between the proof system which is the straigthforward generalization to $\mathrm{L}_{\infty\infty}$ of Gentzen's sequent calculus (as the one given below in Section \ref{subsec:proofsystem}) and  Tarski semantics for $\mathrm{L}_{\infty\infty}$ following the alternative pattern of 
using Boolean valued semantics: roughly for a (complete) Boolean algebra $\bool{B}$ a $\bool{B}$-valued model $\mathcal{M}$ with domain $M$ for a signature $\mathrm{L}$ is a $\bool{B}$-valued 
$\mathrm{L}$-structure, where the truth values of formulae with parameters in $M$ are taken in a Boolean algebra $\bool{B}$ (for details see Def. \ref{def:Bvalmod}).
The delicate part is to assign consistently truth values $\Qp{\phi}$ in $\bool{B}$ to the atomic formulae; once this is done, we can define $\Qp{\neg\phi}=\neg_\bool{B}\Qp{\phi}$, $\Qp{\exists x\,\phi(x)}=\bigvee_{a\in M}\Qp{\phi(a)}$,  $\Qp{\bigvee_{i\in I}\phi_i}=\bigvee_{i\in i}\Qp{\phi_i}$
(for details see Def. \ref{def:boolvalsem}).
It is usually convenient/necessary to assume that $\bool{B}$ is complete to compute truth values for quantifiers and infinitary connectives. 

This approach has already been pursued by Karp and rediscovered by Mansfield \cite{Karp,MansfieldConPro}, who proved a
completeness theorem for the proof system we present in Section \ref{subsec:proofsystem}
relative to Boolean valued semantics.
Apart from Mansfield and Karp's contributions, and Barwise and Ellentuk's contributions \cite{Barwise1973back, Ellentuck1976categoricity}, we haven't traced in the literature many significant uses of Boolean valued semantics in the analysis of $\mathrm{L}_{\infty\infty}$ prior to our results.

In \cite{MatteoJuanBoolean} we proved that for the Boolean valued semantics one recovers also the interpolation theorem \cite[Thm. 3.2]{MatteoJuanBoolean}, the Beth-definability property \cite[Thm. 3.4]{MatteoJuanBoolean}, and the omitting types theorem \cite[Thm. 3.8]{MatteoJuanBoolean}. Furthermore, if one focusses on $\mathrm{L}_{\infty\omega}$, a class of Boolean valued models furnishing a complete semantics for this logic is given by the \emph{mixing models} defined below (see Def. \ref{def:mixingmodel}) \cite[Thm. 3.1]{MatteoJuanBoolean}. It has to be noted 
that, for the $\mathsf{B}$-valued models on a complete Boolean algebra $\mathsf{B}$, the mixing $\mathsf{B}$-valued models form a category which is dually equivalent to the category of sheaves on $\mathsf{B}$ (for the dense Grothendieck topology on the positive elements of $\mathsf{B}$, or -equivalently- for the sup-topology on $\mathsf{B}$ \cite[Examples (d), (e), pp. 114-115]{MacLMoer}), see \cite{PIEVIA19,PIEVIA-SHBOOVALMOD25}.

Let us continue our rough argumentation on why Boolean valued semantics is appropriate for infinitary logics.

In the sequel Tarski models denote for us the usual two valued structures, and we say Boolean/Tarski consistent to assert that a formula/theory has a Boolean valued model/Tarski model.
We now want to bring forward why the so called \emph{full} or \emph{mixing} models are the interesting Boolean valued models.

Given a $\mathsf{B}$-valued $\mathrm{L}$-model $\mathfrak{M}$ and an ultrafilter $F$ on
$\mathsf{B}$, the Tarski quotient $\mathfrak{M}_{/F}$ has as domain the set of equivalence classes for the relation 
	\[
		x \sim_F y \ \Leftrightarrow \ \Qp{x = y}_\mathsf{B}^\mathfrak{M} \in F.
	\]
	For atomic formulae, one sets that 
	
	\begin{equation}\label{eqn:fullness}
	\mathfrak{M}_{/F} \vDash \phi([x_1]_F,\ldots,[x_n]_F)\qquad \text{if and only if} \qquad \Qp{\phi(x_1,\ldots,x_n)}_\mathsf{B}^\mathfrak{M} \in F.
	\end{equation}
	It can be checked that $\mathfrak{M}_{/F}$ is a Tarski model in the signature $\mathrm{L}$, see Def. \ref{def:quotientmodel} for details.

One would like to have that  (\ref{eqn:fullness}) holds for all formulae rather than just for atomic formulae. This is not possible in general for $\mathrm{L}_{\infty\infty}$ and for all Boolean valued models (see for a counterexample \cite[Example 6.3.5]{VialeForcing}), but holds for the large subclass given by the full (or mixing) models relative to the fragment given by $\mathrm{L}_{\infty\omega}$ (see Def. \ref{def:fullmodel}, Proposition \ref{prop:mixfull}).

The following are nice consequences of admitting a full (mixing) model for an $\mathrm{L}_{\infty\omega}$-theory:
\begin{enumerate}
\item \label{observation1}
Assume $T$ is a \emph{countable} $\mathrm{L}_{\omega_1\omega}$-theory admitting 
a mixing $\mathsf{B}$-model $\mathfrak{M}$. Then $\mathfrak{M}_{/F}$ is a Tarski model of $T$ for a dense $G_\delta$-set of ultrafilters $F\in\mathsf{St}(\mathsf{B})$ (this is just a rephrasing in our terminology of Makkai's model existence theorem \cite[Thm. 2]{KeislerInfLog}).
\item \label{observation2}
Assume $T$ is a \emph{first order} theory admitting a mixing $\mathsf{B}$-model $\mathfrak{M}$. Then $\mathfrak{M}_{/F}$ is a Tarski model of $T$ for all ultrafilters $F\in\mathsf{St}(\mathsf{B})$.\footnote{See \cite[Section 3.8, Thm. 6.3.7]{VialeForcing} for the definition of $\mathsf{St}(\mathsf{B})$ and the proof of the mentioned theorem.}
\end{enumerate}
Theorem 3.1 in \cite{MatteoJuanBoolean} shows that any provably consistent $\mathrm{L}_{\infty\omega}$-theory admits a mixing model. Furthermore, it can be shown that any such theory existing in the set theoretic universe $V$ has a Tarski model in a generic extension of $V$. This has to be contrasted with a very nice (and as yet unpublished) result of Ben De Bondt stating that 
there is an $\mathrm{L}_{\omega_1\omega_1}$-sentence $\psi$ which is Boolean consistent, but has no  Tarski model in any generic extension.

In conclusion $\mathrm{L}_{\infty\omega}$ has an absolute Boolean valued semantics\footnote{I.e. for $\mathsf{B}$ a Boolean algebra, $\psi$ an $\mathrm{L}_{\infty\omega}$-sentence, $\mathfrak{M}$ a $\mathsf{B}$-valued $\mathrm{L}$-structure all existing in $V$,  the assertion $b=\Qp{\phi}_\mathsf{B}^\mathfrak{M}$ holds in $V$ if and only if it holds in some transitive outer model $W$ of $V$ (provided one pays a bit of attention in revising the definition of $\Qp{\phi}_\mathsf{B}^\mathfrak{M}$ to cover the case when $\mathsf{B}$ is not complete, so to accomodate the fact that $\mathsf{B}$ might not be a complete Boolean algebra in $W$).} which is complete for a natural proof system and is closely related to Tarski semantics (e.g. an $\mathrm{L}_{\infty\omega}$-theory is Boolean consistent if and only if it has a Tarski model in a forcing extension), on the other hand De Bondt's counterexample shows that the Boolean valued semantics for $\mathrm{L}_{\infty\infty}$ is complete for a natural proof system, but very far apart from Tarski semantics. Furthermore, even its restriction to Tarski models is not absolute.
We refer the reader also to \cite{SantiagoPhd,MatteoJuanBoolean} for a deeper analysis of the merits of our approach in the study of
$\mathrm{L}_{\infty\infty}$ and $\mathrm{L}_{\infty\omega}$.
In particular, \cite{SantiagoPhd} uses Thm. \ref{thm:boolcomp0} to produce an alternative proof of the celebrated result of Asperò and Schindler  \cite{ASPSCH(*)} that $\MM^{++}$ implies Woodin's axiom $(*)$. We are very optimistic that there will be other useful set theoretic applications of Thm. \ref{thm:boolcomp0} to produce iteration theorems for forcing notions. We also expect that Thm. \ref{thm:boolcomp0} can bear interesting fruits outside of set theory.


\section{Preliminaries}\label{sec:preliminaries}

In this section we state all the required background. Those accustomed to the use of infinitary logics and Boolean valued models can safely skip to Section \ref{sec:compactness}. For a more detailed account on these preliminaries see \cite{MatteoJuanBoolean,KeislerInfLog,ModelsGames,VialeForcing}.

\begin{notation}
A \emph{signature} $\mathrm{L}=\bp{R_i:i\in I}\cup\bp{f_j:j\in J}\cup\bp{c_k:k\in K}$ is a set of relation symbols $R_i$ of arity $\kappa_i$, function symbols $f_j$ of arity $\theta_j$ and constant symbols $c_k$. 

A \emph{first order signature} is a signature in which $\kappa_i$ and $\theta_j$ are finite for all $i,j$. 

A \emph{relational signature} is a signature with no function symbol.
\end{notation}

We restrict in most cases our attention to first order relational signatures, and our main results are stated and proved only for those signatures; with minor adjustments everything can be generalized to arbitrary first order signatures. We do this mainly to simplify our exposition and avoid some technicalities which would burden the proofs without shedding more light on the main ideas. On the other hand it is not transparent to us whether there can be natural generalizations of most of our results to arbitrary (non first order) signatures.

\subsection{Infinitary logics}

\begin{definition}
	Fix two cardinals $\lambda, \kappa$, a set of $\kappa$ variables $\{v_\alpha : \alpha < \kappa\}$, and consider a signature $\mathrm{L}$ whose function and relation symbols have all arity below $\kappa$. The set of terms and atomic formulae for $\mathrm{L}_{\kappa \lambda}$ is constructed as expected in analogy to first order logic using the symbols of $\mathrm{L}	\cup \{v_\alpha : \alpha < \kappa\}$. The other $\mathrm{L}_{\kappa \lambda}$-formulae are defined by induction as follows:

	\begin{itemize}
		\item if $\phi$ is an $\mathrm{L}_{\kappa \lambda}$-formula, then so is $\neg \phi$;
		\item if $\Phi$ is a set of $\mathrm{L}_{\kappa \lambda}$-formulae of size $< \kappa$ with free variables in the set $V=\bp{v_i:i\in I}$ for some $I\in [\kappa]^{<\lambda}$, then so are $\bigwedge \Phi$ and $\bigvee\Phi$;
		\item if $V=\bp{v_i:i\in I}$ for some $I\in [\kappa]^{<\lambda}$ and $\phi$ is an $\mathrm{L}_{\kappa \lambda}$-formula, then so are $\forall V \phi$ and $\exists V \phi$.
	\end{itemize}

	We let $\mathrm{L}_{\infty \lambda}$ be the family of $\mathrm{L}_{\kappa \lambda}$-formulae for some $\kappa$, and $\mathrm{L}_{\infty \infty}$ be the family of  $\mathrm{L}_{\kappa \lambda}$-formulae for some $\kappa,\lambda$.
\end{definition}

\subsection{Proof systems for $\Linff$}\label{subsec:proofsystem}

Any usual proof system for first order logic can be naturally generalized to $\mathrm{L}_{\infty\infty}$. For example a Gentzen type set of rules for $\Linff$ is the following:

\begin{displaymath}
\begin{array}{lc}
  
        	         \\
    \mbox{Equality Axiom 1} &
    \prftree{}{\vdash c=c }\\

\\
    \mbox{Equality Axiom 2} &
    \prftree{}{c = d \vdash d = c} \\

\\
   \mbox{Equality Axiom 3} &
    \prftree{}{ c = d, d=e \vdash c = e}\\

\\

 \mbox{Equality Axiom 4}&
    \prftree{}{\{u_i = t_i:i\in I\}, \phi(t_i:i\in I) \vdash \phi(u_i / t_i: i\in I)} \\
 \\
 \end{array}
\end{displaymath}

\begin{displaymath}
\begin{array}{lccl}
    \mbox{Axiom rule} &
    \prftree{}{\Gamma, \phi \vdash \phi, \Delta} &
	\prftree{\Gamma, \phi \vdash \Delta}{\Gamma' \vdash \phi, \Delta'}{\Gamma,\Gamma' \vdash \Delta, \Delta'} &   
    \mbox{Cut Rule} \\
    \mbox{Substitution} &
    \prftree{\Gamma \vdash \Delta}{\Gamma(\overline{w} / \overline{v}) \vdash \Delta(\overline{w} / \overline{v})} & 
    \prftree{\Gamma\vdash \Delta}{\Gamma,\Gamma' \vdash \Delta, \Delta'} &   
    \mbox{Weakening}
 \end{array}
\end{displaymath}

\begin{displaymath}
  \begin{array}{lccl}
    \mbox{Left-$\bigwedge$} &
    \prftree{\Gamma,\Gamma' \vdash \Delta}{\Gamma,\bigwedge \Gamma' \vdash \Delta} &
    \prftree{\Gamma \vdash \phi_i, \Delta \ ,\ i \in I}{\Gamma \vdash \bigwedge_{i \in I} \{\phi_i : i \in I\}, \Delta} &
    \mbox{Right-$\bigwedge$} \\
\\
   \mbox{Left-$\bigvee$} &
    \prftree{\Gamma\vdash \Delta,\Delta' }{\Gamma \vdash \Delta,\bigvee \Delta'} &
    \prftree{\Gamma,\phi_i \vdash , \Delta \ ,\ i \in I}{\Gamma,\bigvee_{i \in I} \{\phi_i : i \in I\} \vdash  \Delta} &
    \mbox{Right-$\bigvee$} \\
  \end{array}
\end{displaymath}

\begin{displaymath}
  \begin{array}{lccl}
    \mbox{Left-$\forall$} &
    \prftree{\Gamma, \phi(\overline{t} /\overline{v}) \vdash \Delta}{\Gamma, \forall \overline{v} \phi(\overline{v}) \vdash \Delta} &
    \prftree[r]{(*)}{\Gamma \vdash \phi(\overline{v} ), \Delta}{\Gamma \vdash \forall \overline{v}\phi(\overline{v}), \Delta} &
    \mbox{Right-$\forall$}\\
    						   \\

  \mbox{Left-$\exists$} &
    \prftree[l]{(*)}{\Gamma, \phi(\overline{v} )\vdash \Delta}{\Gamma, \exists\overline{v} \phi(\overline{v}) \vdash \Delta} &
    \prftree{\Gamma \vdash \phi(\overline{t} / \overline{v}) , \Delta}{\Gamma \vdash \exists \overline{v}\phi(\overline{v}), \Delta} &
    \mbox{Right-$\exists$}\\
    						   \\

  \end{array}
\end{displaymath}

$(*)$ None of the variables appearing in $\overline{v}$ can occur free in $\Gamma\cup\Delta$.
%

A proof of $\Gamma\vdash\Delta$ is a well founded labelled tree whose root is $\Gamma\vdash\Delta$, whose leaves are axioms,  and whose other nodes are obtained from
their immediate successors applying one of the above rules.

In this context we shall (somewhat loosely, but conforming to standard practice) write $\Gamma\vdash\Delta$ to intend that $\Delta$ is derivable from $\Gamma$, i.e. -more correctly- \emph{there is a proof of the string $\Gamma\vdash\Delta$}, and write $\phi \vdash \psi$ when $\{\psi\}$ is derivable from $\{\phi\}$.


\subsection{Boolean valued models} \label{subsec:boolvalsem}

To simplify our exposition we limit ourselves to consider models for relational first order signatures (i.e. signatures without function symbols, and with all their relation symbols of finite arity). With little effort the definitions and results of the present paper can be generalized to arbitrary first order signatures, while it is not clear to us what might be their correct generalization to signatures with relation and function symbols of infinite arity.
We refer the reader to \cite[Chapter 6]{VialeForcing} for details on the content of this section.

\begin{definition} \label{def:Bvalmod}
Let $\mathrm{L}$ be a relational first order signature and $\mathsf{B}$ be a complete Boolean algebra. A $\mathsf{B}$-valued model $\mathcal{M}$ for $\mathrm{L}$ is given by:
	\begin{enumerate}
		\item a non-empty set $M$;
		\item the Boolean value of equality,
		\begin{align*}
			M^2 &\rao \mathsf{B} \\
			(\tau,\sigma) &\mapsto \Qp{\tau=\sigma}^{\mathcal{M}}_\mathsf{B};
		\end{align*} 
		\item the interpretation of relation symbols $R \in \mathrm{L}$ of arity $n$ by maps
			\begin{align*}
				M^n &\rao \mathsf{B} \\
				(\tau_i:i<n) &\mapsto \Qp{R (\tau_i:i<n) }^{\mathcal{M}}_\mathsf{B};
			\end{align*}
		\item the interpretation $c^\mathcal{M} \in M$ of constant symbols $c$ in $\mathrm{L}$.
	\end{enumerate}

	We require that the following conditions hold:

	\begin{enumerate}[(A)]
		\item For all $\tau,\sigma,\pi \in M$,
			\begin{gather*}
				\Qp{\tau=\tau}^{\mathcal{M}}_\mathsf{B} = 1_\mathsf{B}, \\
				\Qp{\tau=\sigma}^{\mathcal{M}}_\mathsf{B} = \Qp{\sigma=\tau}^{\mathcal{M}}_\mathsf{B}, \\
				\Qp{\tau=\sigma}^{\mathcal{M}}_\mathsf{B} \wedge \Qp{\sigma=\pi}^{\mathcal{M}}_\mathsf{B} \leq \Qp{\tau=\pi}_\mathsf{B}^{\mathcal{M}}.
			\end{gather*}
		\item \label{eqn:subslambda} If $R \in \mathrm{L}$ is an $n$-ary relation symbol, for all $(\tau_i:\,i < n), (\sigma_i:\,i < n) \in M^n$,
			\begin{equation*}
				\bigg(\bigwedge_{i < n}\Qp{\tau_i=\sigma_i}^{\mathcal{M}}_\mathsf{B} \bigg) \wedge \Qp{R(\tau_i:\,i < n)}^{\mathcal{M}}_\mathsf{B} \leq 					\Qp{R(\sigma_i:\,i< n)}^{\mathcal{M}}_\mathsf{B}.
			\end{equation*}
	\end{enumerate}
\end{definition}

%
%

\begin{definition}\label{def:boolvalsem} 
	Fix $\mathsf{B}$ a Boolean algebra and $\mathcal{M}$ a $\mathsf{B}$-valued structure for a signature $\mathrm{L}$. We define the $\RO(\mathsf{B}^+)$-value\footnote{When $\bool{B}$ is not a complete boolean algebra, we let $\RO(\bool{B}^+)$ denote its boolean completion (according to the terminology of \cite[Ch. 4]{VialeForcing}). W.l.o.g. the reader may assume throughout the paper that the boolean algebras $\bool{B}$ under consideration are always complete, hence isomorphic to $\RO(\bool{B}^+)$.} of an $\mathrm{L}_{\infty\infty}$-formula $\phi(\overline{v})$ with assignment $\nu:\overline{v} \mapsto \overline{m}$ by induction as follows:

	\begin{gather*}
		\Qp{R(t_i:i < n)[\overline{v} \mapsto \overline{m}]}^{\mathcal{M},\nu}_{\RO(\bool{B}^+)} = \Qp{R(t_i[\overline{v} \mapsto \overline{m}]:i < n)}^{\mathcal{M},\nu}_\mathsf{B} \text{ for $R\in\mathrm{L}$ of arity $n$},\\
	\Qp{(\neg \phi)[\overline{v} \mapsto \overline{m}]}^{\mathcal{M},\nu}_{\RO(\bool{B}^+)}  = \neg \Qp{\phi[\overline{v} \mapsto \overline{m}]}^{\mathcal{M},\nu}_{\RO(\bool{B}^+)} ,\\
		\Qp{(\bigwedge \Phi)[\overline{v} \mapsto \overline{m}]}^{\mathcal{M},\nu}_{\RO(\bool{B}^+)}  = \bigwedge_{\phi \in \Phi} \Qp{\phi[\overline{v} \mapsto \overline{m}]}^{\mathcal{M},\nu}_{\RO(\bool{B}^+)} ,\\
		\Qp{(\bigvee \Phi)[\overline{v} \mapsto \overline{m}]}^{\mathcal{M},\nu}_{\RO(\bool{B}^+)}  = \bigvee_{\phi \in \Phi} \Qp{\phi[\overline{v} \mapsto \overline{m}]}^{\mathcal{M},\nu}_{\RO(\bool{B}^+)} ,\\
		\Qp{(\forall V \phi)[\overline{v} \mapsto \overline{m}]}^{\mathcal{M},\nu}_{\RO(\bool{B}^+)}  = \bigwedge_{\overline{a} \in M^V} \Qp{\phi[\overline{v} \mapsto \overline{m}, V \mapsto \overline{a}]}^{\mathcal{M},\nu}_{\RO(\bool{B}^+)} ,\\ 
		\Qp{(\exists V \phi)[\overline{v} \mapsto \overline{m}]}^{\mathcal{M},\nu}_{\RO(\bool{B}^+)}  = \bigvee_{\overline{a} \in M^V} \Qp{\phi[\overline{v} \mapsto \overline{m}, V \mapsto \overline{a}]}^{\mathcal{M},\nu}_{\RO(\bool{B}^+)}.
	\end{gather*}
\end{definition}

It can be checked that the Boolean truth values in a Boolean valued model given to $\Linff$-sentences is independent of the assignment to free variables. We usually omit as many superscripts and subscripts as possible in the evaluation map $\phi\mapsto\Qp{\phi}$, when those are clear from the context. Also we may write at times $\Qp{\phi(c)}$ ($\Qp{\phi(c_i:i\in I)}$) and 
$\Qp{\phi(c^\mathcal{M})}$
($\Qp{\phi(c_i^{\mathcal{M}}:i\in I)}$): the first is a sloppy notation for the evaluation in the model $\mathcal{M}$ of the sentence obtained by the substitution of the free variable $x$ (the string of free variables $(x_i:i\in I)$) by $c$ (by $(c_i:i\in I)$) in $\phi(x)$ ($\phi(x_i:i\in I)$). The second is a shorthand for what above has been denoted by $\Qp{\phi(x\mapsto c^{\mathcal{M}})}$ (by $\Qp{\phi(x_i\mapsto c_i^{\mathcal{M}}:i \in I)}$). Both notations reproduce a common practice among logicians.

The quotient of  a $\bool{B}$-valued model by a filter $F$ on $\bool{B}$ instantiates in this context the algebraic notion of quotient by an ideal:

\begin{definition}\label{def:quotientmodel}
 Let $\mathsf{B}$ be a Boolean algebra and let $\mathrm{L}=\bp{R_i:i\in I,c_j:j\in J}$ be a relational
language where $R_i$ is a $m_i$-ary relation symbol for every $i\in I$ and each $c_j$ is a constant symbol.
Let also $\mathcal{M}=(M,R_i:i\in I,c_j:j\in J)$ be a $\mathsf{B}$-model for $\mathrm{L}$. 
Let $F$ be a filter on $\bool{B}$. The \textit{$F$-quotient} $\mathcal{M}/_F=(M/_F,R_i/_F: i\in I, [c_j^\mathcal{M}]_F: j\in J)$ is the $\bool{B}/_F$-model defined as follows:
\begin{itemize}
\item [-] $[h]_F=\bp{f\in M:\Qp{f=h}\in F}$ for $h\in M$;
 \item [-] $M/_F=\bp{[h]_F:h\in M}$;
\item [-] $\Qp{R_i/_F([f_1]_F,\dots,[f_{m_i}]_F)}^{\mathcal{M}/_F}_{\bool{B}/_F}=[\Qp{R_i(f_1,\dots,f_n)}^{\mathcal{M}}_{\bool{B}}]_F$
for every $i\in I$.
\end{itemize}

\end{definition}

It can be checked that the $F$-quotient of a $\mathsf{B}$-valued model for a first order signature is a well defined $\bool{B}/_F$-valued model;
 hence it is a Tarski model when $F$ is an ultrafilter.

\begin{definition} \label{def:mixingmodel}
Let $\bool{B}$ be a complete Boolean algebra and $\mathcal{M}$ be a $\mathsf{B}$-valued model for some signature $\mathrm{L}$. Given a cardinal $\lambda$, 
$\mathcal{M}$ has the \emph{$\lambda$-mixing property} if for any antichain $A \subset \mathsf{B}$ of size at most $\lambda$ and $\{\tau_a : a \in A\} \subset M$ there is some $\tau \in M$ such that $a \leq \Qp{\tau=\tau_a}_\mathsf{B}$ for all $a \in A$.

$\mathcal{M}$ has the \emph{mixing property} if it has the $|\bool{B}|$-mixing property.
\end{definition}

\begin{definition} \label{def:fullmodel}
	Let $\lambda \leq \kappa$ be infinite cardinals, $\bool{B}$ be a complete Boolean algebra, and $\mathcal{M}$ be a $\mathsf{B}$-valued model for $\mathrm{L}_{\kappa\lambda}$. $\mathcal{M}$ is \emph{full} for the logic $\mathrm{L}_{\kappa\lambda}$ if for every $\mathrm{L}_{\kappa\lambda}$-formula $\phi(\overline{v},\overline{w})$ and $\overline{m} \in M^{\overline{w}}$ there exists $\overline{n} \in M^{\overline{v}}$ such that

	\[ 
		\Qp{\exists \overline{v} \phi(\overline{v},\overline{m})}_\mathsf{B} = \Qp{\phi(\overline{n},\overline{m})}_\mathsf{B}.
	\]
\end{definition}

\begin{proposition}\label{prop:mixfull}
	Let $\mathrm{L}$ be a signature and $\bool{B}$ a complete Boolean algebra. 
	
	\begin{itemize}
		\item Any $\mathsf{B}$-valued model for $\mathrm{L}$ with the mixing property is full for $\mathrm{L}_{\infty \omega}$.
		\item Assume the $\bool{B}$-valued model $\mathcal{M}$ (with domain $M$)
		
		\begin{itemize}
			\item is full for $\mathrm{L}_{\infty \omega}$,
			\item has the $2$-mixing property, and
			\item for some $\tau,\sigma$ in $M$, $\Qp{\tau = \sigma}^{\mathcal{M}}_\mathsf{B} = 0$.
		\end{itemize} 
		
		Then $\mathcal{M}$ has the mixing property.
	\end{itemize}
\end{proposition}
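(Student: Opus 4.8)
The plan is to treat the two bullets separately. The first is the usual maximal-antichain argument. Fixing an $\mathrm{L}_{\infty\omega}$-formula $\phi(\overline v,\overline w)$ — so $\overline v,\overline w$ are \emph{finite} tuples, which is exactly where the restriction to $\mathrm{L}_{\infty\omega}$ is used — and parameters $\overline m\in M^{\overline w}$, I would set $b=\Qp{\exists\overline v\,\phi(\overline v,\overline m)}_{\mathsf{B}}=\bigvee\{\Qp{\phi(\overline a,\overline m)}_{\mathsf{B}}:\overline a\in M^{\overline v}\}$ and use Zorn's Lemma to pick an antichain $A\subseteq\mathsf{B}^+$ maximal among those all of whose elements $a$ admit a witness $\overline n_a\in M^{\overline v}$ with $a\le\Qp{\phi(\overline n_a,\overline m)}_{\mathsf{B}}$. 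Maximality forces $\bigvee A=b$: if $b\setminus\bigvee A\ne 0$ then, since $b$ is the supremum of the $\Qp{\phi(\overline a,\overline m)}_{\mathsf{B}}$, some meet $\Qp{\phi(\overline a,\overline m)}_{\mathsf{B}}\wedge(b\setminus\bigvee A)$ is nonzero and could be adjoined to $A$. As $A$ is an antichain it has size $\le|\mathsf{B}|$, so the mixing property, applied separately in each of the finitely many coordinates of the tuple, yields $\overline n\in M^{\overline v}$ with $a\le\bigwedge_k\Qp{n_k=(\overline n_a)_k}_{\mathsf{B}}$ for all $a\in A$; the congruence lemma for Boolean valued models (the routine induction extending clauses (A)--(B) of Definition \ref{def:Bvalmod} to all $\mathrm{L}_{\infty\omega}$-formulae) then gives $a\le\Qp{\phi(\overline n,\overline m)}_{\mathsf{B}}$, whence $b=\bigvee A\le\Qp{\phi(\overline n,\overline m)}_{\mathsf{B}}\le b$ and $\overline n$ is the required witness.

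For the second bullet I would first extract, from the $2$-mixing property together with the degenerate pair $\tau_0,\sigma_0\in M$ (with $\Qp{\tau_0=\sigma_0}^{\mathcal{M}}_{\mathsf{B}}=0$), the following coding fact: for every $b\in\mathsf{B}$ there is $\chi_b\in M$ with $\Qp{\chi_b=\tau_0}^{\mathcal{M}}_{\mathsf{B}}=b$. Indeed, applying $2$-mixing to the antichain $\{b,\neg b\}$ with values $\tau_0$ on $b$ and $\sigma_0$ on $\neg b$ gives $\chi_b$ with $b\le\Qp{\chi_b=\tau_0}_{\mathsf{B}}$ and $\neg b\le\Qp{\chi_b=\sigma_0}_{\mathsf{B}}$; symmetry and transitivity of $\Qp{\,\cdot=\cdot\,}$ then give $\Qp{\chi_b=\tau_0}_{\mathsf{B}}\wedge\neg b\le\Qp{\chi_b=\tau_0}_{\mathsf{B}}\wedge\Qp{\chi_b=\sigma_0}_{\mathsf{B}}\le\Qp{\tau_0=\sigma_0}_{\mathsf{B}}=0$, so $\Qp{\chi_b=\tau_0}_{\mathsf{B}}\le b$ as well.

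Now, to verify the mixing property, fix an antichain $A$ and a family $\{\tau_a:a\in A\}\subseteq M$, and consider the parametrised $\mathrm{L}_{\infty\omega}$-formula
\[
\varphi(v)\ :=\ \bigvee_{a\in A}\bigl((\chi_a=\tau_0)\wedge(v=\tau_a)\bigr).
\]
Unwinding the semantics gives $\Qp{\varphi(m)}_{\mathsf{B}}=\bigvee_{a\in A}\bigl(a\wedge\Qp{m=\tau_a}_{\mathsf{B}}\bigr)$ for $m\in M$, hence, using the one-sided infinite distributive law valid in any complete Boolean algebra, $\Qp{\exists v\,\varphi(v)}_{\mathsf{B}}=\bigvee_{a\in A}\bigl(a\wedge\bigvee_{m\in M}\Qp{m=\tau_a}_{\mathsf{B}}\bigr)=\bigvee_{a\in A}a$. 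Fullness then supplies $\rho\in M$ with $\Qp{\varphi(\rho)}_{\mathsf{B}}=\bigvee_{a\in A}a$, i.e. $\bigvee_{a\in A}\bigl(a\wedge\Qp{\rho=\tau_a}_{\mathsf{B}}\bigr)=\bigvee_{a\in A}a$; meeting both sides with a fixed $b\in A$ annihilates every term with $a\ne b$ because $A$ is an antichain, leaving $b=b\wedge\Qp{\rho=\tau_b}_{\mathsf{B}}$, that is $b\le\Qp{\rho=\tau_b}_{\mathsf{B}}$ for all $b\in A$. Thus $\rho$ mixes the family, and $\mathcal{M}$ has the mixing property.

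The delicate point is the invocation of fullness in this last step: the formula $\varphi$ carries the parameters $\chi_a$ and $\tau_a$ for $a\in A$, of which there may be unboundedly many. So one must read ``full for $\mathrm{L}_{\infty\omega}$'' as permitting arbitrary parameters from $M$ — equivalently, as fullness for $\mathrm{L}_{\infty\omega}$ over the signature expanded by a constant for each element of $M$ — which is the intended reading of Definition \ref{def:fullmodel}. Everything else is bookkeeping: checking that $\varphi$ is a legitimate $\mathrm{L}_{\kappa\omega}$-formula for $\kappa>|A|$, the automatic bound $|A|\le|\mathsf{B}|$ on antichains, the congruence lemma used in the first bullet, and the trivial case $A=\varnothing$.
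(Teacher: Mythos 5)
Your proof is correct and follows essentially the same route as the paper: for the second bullet your argument is the paper's verbatim (your $\chi_a$ playing the role of its $\sigma_a$, and you are in fact slightly more careful in deriving the exact equality $\Qp{\chi_b=\tau_0}_{\mathsf{B}}=b$ from the two $2$-mixing inequalities via transitivity, a step the paper leaves implicit), while for the first bullet, which the paper only delegates to a citation, you supply the standard maximal-antichain argument. Your closing remark on reading fullness as allowing arbitrary parameters from $M$ matches the paper's own implicit use of Definition \ref{def:fullmodel} in this very proof, so it is not a gap.
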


A proof of the first item can be obtained with minor twists from that of \cite[Proposition 2.9]{MatteoJuanBoolean}.

\begin{proof}
	Let $A$ be an antichain and $\{\tau_a : a \in A\}$ be a subset of $\mathcal{M}$. We need to find $\rho\in M$ such that 
	
	\[
		a \leq \Qp{\rho = \tau_a}_\mathsf{B} \hspace{0,5cm} \forall a \in A.
	\]
	\vspace{-0,1cm}
	
	\noindent By hypothesis there exist $\tau,\sigma$ such that $\Qp{\tau = \sigma}_\mathsf{B} = 0$. By $2$-mixing for $\mathcal{M}$, for each antichain $\{a,\neg a\}$ we can find $\sigma_a$ such that 
	
	\[
		\Qp{\sigma_a = \tau}_\mathsf{B} = a \ \ \wedge \ \  \Qp{\sigma_a = \sigma}_\mathsf{B} = \neg a.
	\]
	\vspace{-0,1cm}
	
	\noindent Consider the formula 
	
	\[
		\exists x \bigvee_{a \in A} \big( x = \tau_a \wedge \sigma_a = \tau \big).
	\]
	\vspace{-0,1cm}
	
	\noindent Let us check that its truth value is $\bigvee A$. 

	\begin{itemize}
		\item First, since $\Qp{\sigma_a = \tau}_\mathsf{B} = a$, we have that 
		
		\[
			\Qp{\rho = \tau_a}_\mathsf{B} \wedge \Qp{\sigma_a = \tau}_\mathsf{B} \leq a
		\]
		\vspace{-0,1cm}

		\noindent for all $\rho \in M$: therefore 
		
		\[
			\Qp{\exists x \bigvee_{a \in A} \big( x = \tau_a \wedge \sigma_a = \tau \big)}_\mathsf{B} \leq  \bigvee A.
		\]
		\vspace{-0,1cm}
		
		\item Secondly:
		
		\begin{align*}
			\Qp{\exists x \bigvee_{a \in A} \big( x = \tau_a \wedge \sigma_a = \tau \big)}_\mathsf{B}\geq\Qp{ \bigvee_{a \in A} \big( \tau_a = \tau_a \wedge \sigma_a = \tau \big)}_\mathsf{B} = \\
=\bigvee_{a \in A} (\Qp{\tau_a = \tau_a}_\mathsf{B} \wedge \Qp{\sigma_a = \tau}_\mathsf{B}) = \bigvee A.
		\end{align*}

	\end{itemize}	
	
	By fullness of $\mathcal{M}$, we can find $\rho$ such that 
	
	\begin{align*}
		\bigvee A = \Qp{\exists x \bigvee_{a \in A} \big( x = \tau_a \wedge \sigma_a = \tau \big)}_\mathsf{B}=\Qp{\bigvee_{a \in A} \big( \rho = \tau_a \wedge \sigma_a = \tau \big)}_\mathsf{B} = \\
=\bigvee_{a \in A} (\Qp{\rho = \tau_a}_\mathsf{B} \wedge \Qp{\sigma_a = \tau}) = \bigvee_{a \in A} \Qp{\rho = \tau_a}_\mathsf{B} \wedge a.
	\end{align*}

	\noindent Since $A$ is an antichain, we must have that $\Qp{\rho = \tau_a}_\mathsf{B} \wedge a = a$ for all $a\in A$. We conclude $a \leq \Qp{\rho = \tau_a}_\mathsf{B}$ holds for all $a\in A$, as was to be shown.
\end{proof}

\subsection{Semantics and notions of logical consequence for $\mathrm{L}_{\infty \omega}$ and $\mathrm{L}_{\infty \infty}$}

We recall some other key definitions and results from \cite{MatteoJuanBoolean}.
\begin{definition} \label{def:BolCon}

	$\mathrm{BVM}$ denotes the class of Boolean valued models, and $\mathrm{Sh}$ the subclass of Boolean valued models which have the mixing property. Let $\Gamma$ and $\Delta$ be sets of $\mathrm{L}_{\infty\infty}$-formulae. In case $\Gamma = \emptyset$ we let  
	
	\[
		\Qp{\bigwedge \Gamma}_\bool{B}^{\mathcal{M},\nu} = 1_{\bool{B}},
	\]
	and if $\Delta = \emptyset$ we let
	\[
		\Qp{\bigvee \Delta}_\bool{B}^{\mathcal{M},\nu} = 0_\bool{B}
	\]
for any $\bool{B}$-valued model $\mathcal{M}$ and assignement $\nu$.
	
	\begin{itemize}
		\item $\Gamma$ is \emph{Boolean satisfiable} (or Boolean consistent) if there is a complete Boolean algebra $\bool{B}$, a $\bool{B}$-valued model $\mathcal{M}$, and an assignemnt $\nu$ such that $\Qp{\phi}^{\mathcal{M},\nu}_\bool{B}=1_\bool{B}$ for each $\phi\in \Gamma$.
		\item $\Gamma\vDash_\mathrm{BVM} \Delta$ if 
			\[
				\Qp{\bigwedge\Gamma}^{\mathcal{M},\nu}_\bool{B}\leq\Qp{\bigvee\Delta}^{\mathcal{M},\nu}_\bool{B}
			\]
		for any complete Boolean algebra $\bool{B}$,
		$\bool{B}$-valued model $\mathcal{M}$, and assignement $\nu$.
		\item $\Gamma\vDash_\mathrm{Sh} \Delta$ if 
			\[
				\Qp{\bigwedge\Gamma}^{\mathcal{M},\nu}_\bool{B}\leq\Qp{\bigvee\Delta}^{\mathcal{M},\nu}_\bool{B}
			\]
		for any complete Boolean algebra $\bool{B}$, 
		$\bool{B}$-valued model $\mathcal{M}$ \emph{with the mixing property}, and assignment $\nu$.
	
		\item $\psi \equiv_{\mathrm{BVM}} \phi$ (respectively $\psi \equiv_{\mathrm{Sh}} \phi$) if $\bp{\psi} \vDash_\mathrm{BVM} \bp{\phi}$ and $\bp{\phi} \vDash_\mathrm{BVM}\bp{\psi} $ (respectively $\bp{\psi} \vDash_\mathrm{Sh} \bp{\phi}$ and $\bp{\phi} \vDash_\mathrm{Sh}\bp{\psi} $).
	\end{itemize}
\end{definition}

\begin{theorem}[Boolean Completeness for $\mathrm{L}_{\infty\infty}$]  \label{them:boolcompl}
	The following are equivalent for $T,S$ sets of $\mathrm{L}_{\infty\infty}$-formulae.
	\begin{enumerate}
		\item \label{thm:boolcomp2} $T\models_{\mathrm{BVM}}S$,
		\item \label{thm:boolcomp3}	$T\vdash S$.
	\end{enumerate}
	
	Furthermore, if $T,S$ are sets of $\mathrm{L}_{\infty \omega}$-formulae, one also has the equivalence of any of the above items with 
	
	\begin{itemize}
		\item[3.] \label{thm:boolcomp1} $T\models_{\mathrm{Sh}}S$.
	\end{itemize}
	
\end{theorem}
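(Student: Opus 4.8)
The statement assembles two known results: the equivalence of \ref{thm:boolcomp3} and \ref{thm:boolcomp2} is the Karp--Mansfield completeness theorem \cite{Karp,MansfieldConPro} for the sequent calculus of Section \ref{subsec:proofsystem} with respect to Boolean valued semantics, while the extra equivalence with \ref{thm:boolcomp1} in the $\mathrm{L}_{\infty\omega}$ case is \cite[Thm.~3.1]{MatteoJuanBoolean}. I would prove $\ref{thm:boolcomp3}\Rightarrow\ref{thm:boolcomp2}$ (soundness) by a routine induction on the height of a proof tree. Reading $T\vdash S$ as the existence of finite $T_0\subseteq T$, $S_0\subseteq S$ with a derivation of $T_0\vdash S_0$, it suffices to check that $\Qp{\bigwedge\Gamma}^{\mathcal{M},\nu}_\mathsf{B}\leq\Qp{\bigvee\Delta}^{\mathcal{M},\nu}_\mathsf{B}$ holds for every derivable finite sequent $\Gamma\vdash\Delta$, every $\mathsf{B}$-valued model $\mathcal{M}$ and every assignment $\nu$; then $\Qp{\bigwedge T}\leq\Qp{\bigwedge T_0}\leq\Qp{\bigvee S_0}\leq\Qp{\bigvee S}$. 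The equality axioms follow from clauses (A)--(B) of Definition \ref{def:Bvalmod}, and every rule preserves this inequality by elementary computations in complete Boolean algebras; the only delicate point is that in Right-$\forall$ and Left-$\exists$ the eigenvariable condition $(*)$ guarantees that $\Qp{\bigwedge\Gamma}$ and $\Qp{\bigvee\Delta}$ do not depend on the value assigned to $\overline{v}$, so the infimum (resp.\ supremum) over $M^{\overline{v}}$ defining $\Qp{\forall\overline{v}\phi}$ (resp.\ $\Qp{\exists\overline{v}\phi}$) can be pulled out of the inequality using the (co)distributivity of $\mathsf{B}$.

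For $\ref{thm:boolcomp2}\Rightarrow\ref{thm:boolcomp3}$ I would argue by contraposition, constructing a witnessing model. Assume $T\not\vdash S$, and fix a regular cardinal $\kappa$ larger than $|\mathrm{L}|$, $|T|$, $|S|$ and the size of every formula in $T\cup S$. Expand $\mathrm{L}$ to $\mathrm{L}^+$ by adjoining one fresh constant $c_v$ for each free variable $v$ occurring in $T\cup S$, together with a supply $C$ of fresh Henkin constants, and substitute $c_v$ for $v$ throughout, obtaining sets $T',S'$ of $\mathrm{L}^+_{\kappa\kappa}$-sentences with $T'\not\vdash S'$ (Substitution rule). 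Since a $\mathsf{B}$-valued $\mathrm{L}^+$-model is precisely a $\mathsf{B}$-valued $\mathrm{L}$-model together with an assignment $\nu(v):=c_v^{\mathcal{M}}$ of the old free variables, it is enough to produce a complete $\mathsf{B}$ and a $\mathsf{B}$-valued $\mathrm{L}^+$-model $\mathcal{M}$ with $\bigwedge_{\phi\in T'}\Qp{\phi}^{\mathcal{M}}_\mathsf{B}\wedge\bigwedge_{\psi\in S'}\neg\Qp{\psi}^{\mathcal{M}}_\mathsf{B}\neq 0_\mathsf{B}$, i.e.\ $\bigwedge_{\phi\in T'}\Qp{\phi}^{\mathcal{M}}_\mathsf{B}\not\leq\bigvee_{\psi\in S'}\Qp{\psi}^{\mathcal{M}}_\mathsf{B}$. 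To this end I would form the \emph{canonical consistency property} $\mathcal{S}$: the family of finite sets $p$ of $\mathrm{L}^+_{\kappa\kappa}$-sentences with $T'\cup p\not\vdash S'$ (no derivation of $T_0\cup p\vdash S_0$ for finite $T_0\subseteq T'$, $S_0\subseteq S'$). Its closure conditions --- handling $\neg$, $\bigwedge$ (if $\bigwedge\Psi\in p$ then $p\cup\{\psi\}\in\mathcal{S}$ for each $\psi\in\Psi$), $\bigvee$ (for some $\psi\in\Psi$), and $\exists$/$\forall$ (witnessing with a constant from $C$) --- are verified just as in the first-order Henkin-style argument, using the matching left/right rules and Cut, and $\emptyset\in\mathcal{S}$ since $T'\not\vdash S'$. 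Feeding $\mathcal{S}$ into the model-existence theorem for consistency properties (the machinery developed in Section \ref{sec:preliminaries} and \cite{MatteoJuanBoolean}) then yields a complete $\mathsf{B}$ and a $\mathsf{B}$-valued $\mathrm{L}^+$-model $\mathcal{M}$, with domain a quotient of $C$, whose evaluation map satisfies a truth lemma (it respects $\neg,\bigwedge,\bigvee,\forall,\exists$ and assigns positive value to the conjunction of every $p\in\mathcal{S}$), which is exactly what is needed.

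For the $\mathrm{L}_{\infty\omega}$ refinement: $\ref{thm:boolcomp2}\Rightarrow\ref{thm:boolcomp1}$ is immediate since mixing models are Boolean valued models, so validity in all Boolean valued models a fortiori gives validity in all mixing ones; and $\ref{thm:boolcomp1}\Rightarrow\ref{thm:boolcomp3}$ is obtained by rerunning the previous construction while forcing the model to be mixing. Here one exploits that $\mathrm{L}_{\infty\omega}$ has only finite quantifier blocks, so the canonical consistency property may be strengthened by \emph{mixing conditions} --- for each finite antichain of approximations forcing distinct values of some constant, a new constant realizing the corresponding mixture --- and the model-existence theorem then outputs a model with the mixing property witnessing $T\not\models_{\mathrm{Sh}}S$; alternatively one closes the model obtained above under mixtures along antichains and uses fullness for $\mathrm{L}_{\infty\omega}$ (Proposition \ref{prop:mixfull}) to check the truth values are unchanged. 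Either route reproves \cite[Thm.~3.1]{MatteoJuanBoolean}.

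The main obstacle is the model-existence theorem for consistency properties, and for item \ref{thm:boolcomp1} its mixing refinement: one must build the Boolean algebra together with a model that simultaneously contains enough Henkin witnesses to correctly evaluate every existential quantifier block and whose evaluation map provably obeys the truth lemma across all infinitary connectives and quantifiers --- and, in the $\mathrm{L}_{\infty\omega}$ case, do so while securing the mixing property without disturbing those truth values. The bookkeeping that keeps the relevant infinitary suprema from collapsing when one passes to a complete Boolean algebra is the genuinely technical part; by contrast the soundness direction, the syntactic reduction to a consistent set of sentences, and the implication $\ref{thm:boolcomp2}\Rightarrow\ref{thm:boolcomp1}$ are routine.
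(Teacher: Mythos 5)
The paper does not actually prove this theorem: it is quoted as background, with the $\mathrm{L}_{\infty\infty}$ equivalence attributed to Mansfield \cite{MansfieldConPro} (and Karp \cite{Karp}) and the $\mathrm{L}_{\infty\omega}$ refinement to \cite{MatteoJuanBoolean}, both resting on the Model Existence Theorem \ref{ModExiThe} machinery. Your proposal identifies exactly these sources, and your sketch (soundness by induction on the well-founded proof tree, completeness by feeding a canonical consistency property into the Model Existence Theorem, and the mixing refinement from its $\mathrm{L}_{\infty\omega}$ strengthening) is the standard argument those references carry out, so it is essentially the same approach.
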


We note that the $\Linff$ version of the above completeness theorem (i.e. the equivalence between \ref{thm:boolcomp2} and \ref{thm:boolcomp3}  of the theorem) is due to Mansfield \cite{MansfieldConPro}. The proof of the above equivalences for $\mathrm{L}_{\infty \omega}$ appears in \cite{MatteoJuanBoolean}.
In \cite[Theorems 3.2, 3.8, 3.10]{MatteoJuanBoolean} we also proved natural generalizations to Boolean valued semantics for $\Linff$ of the interpolation and omitting types theorems holding for first order logic, we refer the reader to that paper for details.

\subsection{Consistency properties}

We adopt the following conventions:
\begin{notation}
The languages $\mathrm{L}$ under consideration are relational and have as set of constants $\mathcal{D}$.

An $\Linff$-sentence is a formula without free variables, and an $\Linff$-theory is a set of sentences.
\end{notation}

\begin{definition} \label{MovNegIns} Let $\phi$ be an $\mathrm{L}_{\infty \infty}$-formula. 
We define $\phi \neg$ (moving a negation inside) by induction on the complexity of formulae:

\begin{itemize}
\item If $\phi$ is an atomic formula $\varphi$, $\phi \neg$ is $\neg \varphi$.
\item If $\phi$ is $\neg \varphi$, $\phi \neg$ is $\varphi$.
\item If $\phi$ is $\bigwedge \Phi$, $\phi \neg$ is $\bigvee \{\neg \varphi : \varphi \in \Phi\}$.
\item If $\phi$ is $\bigvee \Phi$, $\phi \neg$ is $\bigwedge \{\neg \varphi : \varphi \in \Phi\}$.
\item If $\phi$ is $\forall \vec{v} \varphi(\vec{v})$, $\phi \neg$ is $\exists \vec{v} \neg \varphi (\vec{v})$.
\item If $\phi$ is $\exists \vec{v} \varphi(\vec{v})$, $\phi \neg$ is $\forall \vec{v} \neg \varphi (\vec{v})$.
\end{itemize}
\end{definition}

\begin{definition} \label{def:ConProInf}
	Let $\mathrm{L} = \mathcal{R} \cup \mathcal{D}$ be a signature where the relation symbols are in $\mathcal{R}$ and $\mathcal{D}$ is the set of constants. Given an infinite set of constants $\mathcal{C}$ disjoint from $\mathcal{D}$, consider $\mathrm{L}(\mathcal{C})$ the signature obtained by extending $\mathrm{L}$ with the constants in $\mathcal{C}$. A set $S$ whose elements are set sized subsets of $\mathrm{L}(\mathcal{C})_{\infty\infty}$ is a consistency property for $\mathrm{L}(\mathcal{C})_{\infty\infty}$ if for each $s \in S$ the following properties hold:

	\begin{enumerate}
		\item[(Con)]\label{conspropCon} for any $\mathrm{L}(\mathcal{C})_{\infty\infty}$-sentence $\phi$ either $\phi\not\in s$ or $\neg\phi\not\in s$,
		\item[(Ind.1)]\label{conspropInd1} if $\neg \phi \in s$, $s \cup \{\phi \neg\} \in S$,
		\item[(Ind.2)]\label{conspropInd2} if $\bigwedge \Phi \in s$, then for any $\phi \in \Phi$, $s \cup \{\phi\} \in S$,
		\item[(Ind.3)]\label{conspropInd3} if $\forall \vec{v} \phi(\vec{v}) \in s$, then for any $\vec{c} \in (\mathcal{C}\cup\mathcal{D})^{|\vec{v}|}$, $s \cup \{\phi(\vec{c})\} \in S$,
		\item[(Ind.4)]\label{def:conspropInd4} if $\bigvee \Phi \in s$, then for some $\phi \in \Phi$, $s \cup \{\phi\} \in S$,
		\item[(Ind.5)]\label{def:conspropInd5} if $\exists \vec{v} \phi(\vec{v}) \in s$, then for some $\vec{c} \in \mathcal{C}^{|\vec{v}|}$, $s \cup \{\phi(\vec{c})\} \in S$,
		\item[(Str.1)]\label{def:conspropStr1} if $c,d \in \mathcal{C}\cup\mathcal{D}$ and $c = d \in s$, then $s \cup \{d = c\} \in S$,
		\item[(Str.2)] \label{def:conspropStr2} if $c,d \in \mathcal{C} \cup \mathcal{D}$ and $\{c = d, \phi(d)\} \subset s$, then $s \cup \{\phi(c)\} \in S$,
		\item[(Str.3)] \label{def:conspropStr3} if $d \in \mathcal{C} \cup \mathcal{D}$, then for some $c \in \mathcal{C}$, $s \cup \{c = d\} \in S$.
	\end{enumerate} 
\end{definition}

The following theorem is essentially due to Mansfield for its first part \cite{MansfieldConPro}, while the stronger conclusion for consistency properties consisting of $\Linf$-sentences is \cite[Thm. 5.15]{MatteoJuanBoolean}:

\begin{theorem}[Model Existence Theorem for $\Linff$] \label{ModExiThe} 
	Let $\mathrm{L}$ be a language, $\mathcal{C}$ be a set of fresh constants and $S$ be a consistency property consisting of $\mathrm{L}(\mathcal{C})_{\infty \infty}$-sentences. Then every $s \in S$ is Boolean consistent (and holds in a model with domain $\mathcal{C}$). Furthermore, if $S$ only contains $\Linf$-formulae, for each $s\in S$ there is a witness of its Boolean consistency with the mixing property.	
\end{theorem}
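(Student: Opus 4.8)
The plan is to prove the theorem by a Boolean-valued Henkin construction producing, for a fixed $s_0 \in S$, a single $\bool{B}$-valued model with domain $\mathcal{C}$ in which every member of $s_0$ receives value $1_\bool{B}$. First I order $S$ by reverse inclusion, so that $t$ refines $s$ iff $t \supseteq s$, and work below $s_0$: let $N_s = \bp{t \in S : t \supseteq s}$, let $P = \cp{N_{s_0},\supseteq}$, and let $\bool{B} = \RO(P)$ be its Boolean completion, so that the cones $N_s$ (for $s \supseteq s_0$) form a basis of nonzero conditions and $N_{s_0}$ regularizes to $1_\bool{B}$. The model $\mathcal{M}$ has domain $\mathcal{C}$, interprets each $c \in \mathcal{C}$ as itself (and each $d \in \mathcal{D}$ by a constant supplied by (Str.3)), and assigns to each atomic $\mathrm{L}(\mathcal{C})$-sentence $\phi$ (of the form $c=d$ or $R(\vec{c})$) the regular open value $\Qp{\phi}_\bool{B} = \Reg{\bp{t \in N_{s_0} : \phi \in t}}$; note that $\bp{t : \phi \in t}$ is automatically open, being upward closed under $\supseteq$.

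Before anything else I must check that $\mathcal{M}$ is a legitimate $\bool{B}$-valued model, i.e. that axioms (A) and (B) of Def.~\ref{def:Bvalmod} hold. Reflexivity $\Qp{c=c}_\bool{B}=1_\bool{B}$ reduces to the density of $\bp{t : c=c \in t}$, which follows by combining (Str.3), (Str.1) and (Str.2): produce a fresh $c'$ with $c'=c$, symmetrize to $c=c'$, and substitute to obtain $c=c$. Symmetry and transitivity of the equality value, and the congruence axiom (B) for relation symbols, follow from (Str.1) and (Str.2) by the same kind of density argument. The core of the proof is then the \emph{Truth Lemma}: for every $\mathrm{L}(\mathcal{C})_{\infty\infty}$-sentence $\phi$ and every $s \in N_{s_0}$,
\[
\phi \in s \ \Longrightarrow \ N_s \subseteq \Qp{\phi}_\bool{B}.
\]

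I prove the Truth Lemma by induction on the syntactic complexity of $\phi$, and the one genuinely delicate point is that a consistency property only hands me one extension at a time: to conclude $N_s \subseteq \Qp{\phi}_\bool{B}$ I instead show that the open set witnessing $\Qp{\phi}_\bool{B}$ is dense below $s$, applying the closure conditions not at $s$ but at an arbitrary refinement $t \in N_s$ (legitimate, since every such $t$ lies in $S$). For $\phi = \bigvee\Phi$ I use (Ind.4) at each $t \supseteq s$ to find $t' \supseteq t$ and $\varphi\in\Phi$ with $\varphi \in t'$; the inductive hypothesis gives $N_{t'}\subseteq \Qp{\varphi}_\bool{B}$, so $\bigcup_\varphi \Qp{\varphi}_\bool{B}$ is dense in $N_s$ and hence $N_s \subseteq \Qp{\bigvee\Phi}_\bool{B}$. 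The cases $\bigwedge\Phi$, $\forall V\chi$, $\exists V\chi$ are identical via (Ind.2), (Ind.3), (Ind.5). The negation case $\phi = \neg\psi$ with $\psi$ atomic uses (Con): no $t\supseteq s$ contains $\psi$, so $N_s$ is disjoint from $\bp{t : \psi\in t}$ and therefore from its regularization $\Qp{\psi}_\bool{B}$. For $\psi$ non-atomic I apply (Ind.1) to replace $\neg\psi$ by $\psi\neg$ (recall $\Qp{\psi\neg}_\bool{B} = \neg\Qp{\psi}_\bool{B}$, a direct consequence of Def.~\ref{def:boolvalsem} and De Morgan), note that one further application of (Ind.4)/(Ind.2)/(Ind.5)/(Ind.3) to $\psi\neg$ produces a sentence of the form $\neg\varphi_0$ or $\neg\chi(\vec{c})$ of strictly smaller complexity than $\neg\psi$, and close by the density argument above. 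Once the Truth Lemma holds, $\phi \in s_0$ gives $N_{s_0} \subseteq \Qp{\phi}_\bool{B}$, i.e. $\Qp{\phi}_\bool{B} = 1_\bool{B}$, for every $\phi\in s_0$; so $\mathcal{M}$ is a $\bool{B}$-valued model of $s_0$ with domain $\mathcal{C}$, proving the first assertion for all of $\Linff$.

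For the final clause, assume $S$ consists of $\Linf$-sentences and pass to the mixing closure $\widehat{\mathcal{M}}$ of the model $\mathcal{M}$ just built: its domain $\widehat{M}$ consists of the formal $\bool{B}$-mixtures of elements of $\mathcal{C}$ along antichains of $\bool{B}$, with atomic values computed by the expected suprema. By construction $\widehat{\mathcal{M}}$ has the mixing property and the inclusion $\mathcal{C} \hookrightarrow \widehat{M}$ preserves all atomic Boolean values. The point is that $\widehat{\mathcal{M}}$ still satisfies $s_0$: being mixing it is full for $\Linf$ by the first item of Proposition~\ref{prop:mixfull}, so the value of any $\Linf$-formula at a mixture equals the corresponding mixture of its values at elements of $\mathcal{C}$; because $\Linf$-quantifier blocks are finite, an induction on $\Linf$-sentences with parameters in $\mathcal{C}$ then yields $\Qp{\phi}_\bool{B}^{\widehat{\mathcal{M}}} = \Qp{\phi}_\bool{B}^{\mathcal{M}}$, the only nontrivial step being that each quantifier value computed over $\widehat{M}$ already equals the one computed over $\mathcal{C}$ (using that a maximal antichain joins to $1_\bool{B}$). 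Hence $\widehat{\mathcal{M}}$ is a mixing witness of the Boolean consistency of $s_0$. I expect the two hard points to be exactly these: organizing the Truth Lemma so that the density arguments close the induction through the negation clause (the interplay of (Ind.1) with $\psi\neg$ and the complexity bookkeeping), and verifying that passage to the mixing closure leaves $\Linf$-truth values untouched.
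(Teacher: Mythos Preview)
The paper does not supply its own proof of this theorem: it is stated in the preliminaries with attribution to Mansfield for the $\Linff$ clause and to \cite[Thm.~5.15]{MatteoJuanBoolean} for the mixing addendum. Your sketch is precisely the standard argument behind those references---the regular-open algebra of $(S,\supseteq)$ below $s_0$, a Truth Lemma established by density below each condition (with (Ind.1) followed by one further (Ind.$k$) step to push the induction through the negation case), and passage to the mixing closure for $\Linf$---and it is correct in substance.

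One detail to tighten: interpreting each $d\in\mathcal{D}$ ``by a constant supplied by (Str.3)'' does not work as written, because (Str.3) hands you a witness $c$ that depends on the condition $s$ at which you apply it, and in general no single $c\in\mathcal{C}$ will satisfy $\Qp{c=d}_{\bool{B}}=1_{\bool{B}}$. The clean fix is to first build the model with domain $\mathcal{C}\cup\mathcal{D}$ (so that $d^{\mathcal{M}}=d$ and the atomic values $\Reg{\bp{t:\phi\in t}}$ make sense uniformly for all atomic $\mathrm{L}(\mathcal{C})$-sentences), run the Truth Lemma there, and then note that (Str.3) forces $\bigvee_{c\in\mathcal{C}}\Qp{c=d}_{\bool{B}}=1_{\bool{B}}$ for every $d$, so that the quantifier suprema computed over $\mathcal{C}$ and over $\mathcal{C}\cup\mathcal{D}$ coincide. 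This is a routine adjustment and does not affect the shape of your argument.
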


\subsection{On the quantifier free version of an $\Linff$-theory}\label{subsec:qfversionofT}

We show that the problem of Boolean consistency for $\Linff$-theories can be reduced to 
the simpler case in which the theories under examination consist uniquely of quantifier free $\Linff$-sentences.

\begin{definition}\label{def:QECaxiom}
Given a set of constants $\mathcal{C}$, the quantifier elimination axiom
$\bool{QE}_{\mathcal{C}}$ is the sentence:
\[
\forall x\,\bigvee_{c\in\mathcal{C}}(x=c).
\]

Given an $\Linff$-sentence $\phi$, $\phi_{\mathcal{C}}$ is the quantifier free sentence obtained by replacing along the tree of subformulae of $\phi$ the
quantified subformulae of type:
\begin{itemize}
\item
$\forall \vec{x}\,\phi(\vec{x})$ by $\bigwedge_{\vec{c}\in\mathcal{C}^{\vec{x}}}\phi(\vec{c})$,
\item
$\exists \vec{x}\,\phi(\vec{x})$ by $\bigvee_{\vec{c}\in\mathcal{C}^{\vec{x}}}\phi(\vec{c})$.
\end{itemize}

Given an $\Linff$-theory $T$, $T_{\mathcal{C}}$ is the $\Linff$-theory:
\[
\bp{\phi_{\mathcal{C}}: \phi\in T}\cup\bp{\bool{QE}_{\mathcal{C}}}.
\]
\end{definition}

For $\Linf$-theories one can argue that they are essentially equivalent (modulo the 
$\bool{QE}_{\mathcal{C}}$-axiom) to a quantifier free $\Linf$-theory. This follows from the following:
\begin{fact}
Let $\psi$ be an $\Linf$-sentence. Then for all infinite set of fresh constants $\mathcal{C}$ for $\mathrm{L}$
\[
\mathrm{QE}_{\mathcal{C}}\models_{\bool{BVM}}\psi\leftrightarrow \psi_{\mathcal{C}}.
\]
\end{fact}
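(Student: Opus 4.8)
The plan is to prove the slightly stronger statement that in \emph{every} complete Boolean algebra $\mathsf{B}$ and every $\mathsf{B}$-valued model $\mathcal{M}$ with $\Qp{\mathrm{QE}_{\mathcal{C}}}^{\mathcal{M}}_{\mathsf{B}}=1_{\mathsf{B}}$, and for every assignment $\nu$ of the (necessarily finitely many) free variables of an $\Linf$-formula $\phi$ into the domain $M$ of $\mathcal{M}$, one has $\Qp{\phi}^{\mathcal{M},\nu}_{\mathsf{B}}=\Qp{\phi_{\mathcal{C}}}^{\mathcal{M},\nu}_{\mathsf{B}}$ (with $\phi_{\mathcal{C}}$ as in Definition \ref{def:QECaxiom}). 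The Fact for a sentence $\psi$ then follows by relativisation: given an arbitrary complete $\mathsf{B}$ and $\mathsf{B}$-valued $\mathcal{M}$, put $b=\Qp{\mathrm{QE}_{\mathcal{C}}}^{\mathcal{M}}_{\mathsf{B}}$; if $b=0_{\mathsf{B}}$ there is nothing to prove, and otherwise one passes to the complete Boolean algebra $\mathsf{B}{\upharpoonright}b=\{c\in\mathsf{B}:c\le b\}$ and to the $\mathsf{B}{\upharpoonright}b$-valued model $\mathcal{M}{\upharpoonright}b$ with the same domain whose Boolean values are those of $\mathcal{M}$ multiplied by $b$. A straightforward induction on formulae (using only the distributive law $b\wedge\bigvee_{i}a_{i}=\bigvee_{i}(b\wedge a_{i})$, valid in any complete Boolean algebra) shows $\Qp{\chi}^{\mathcal{M}{\upharpoonright}b}_{\mathsf{B}{\upharpoonright}b}=b\wedge\Qp{\chi}^{\mathcal{M}}_{\mathsf{B}}$ for all formulae $\chi$; hence $\Qp{\mathrm{QE}_{\mathcal{C}}}^{\mathcal{M}{\upharpoonright}b}=1_{\mathsf{B}{\upharpoonright}b}$, and applying the claim inside $\mathsf{B}{\upharpoonright}b$ gives $b\wedge\Qp{\psi}^{\mathcal{M}}=b\wedge\Qp{\psi_{\mathcal{C}}}^{\mathcal{M}}$, which — recalling that $b\le(x\Leftrightarrow y)$ iff $b\wedge x=b\wedge y$ — is exactly the inequality $\Qp{\mathrm{QE}_{\mathcal{C}}}^{\mathcal{M}}\le\Qp{\psi\leftrightarrow\psi_{\mathcal{C}}}^{\mathcal{M}}$ required by $\mathrm{QE}_{\mathcal{C}}\models_{\mathrm{BVM}}\psi\leftrightarrow\psi_{\mathcal{C}}$.

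For the inductive claim, the atomic case is trivial since $\phi_{\mathcal{C}}=\phi$ for atomic $\phi$, and the cases $\neg$, $\bigwedge\Phi$, $\bigvee\Phi$ follow at once from the inductive hypothesis together with the definition of $(-)_{\mathcal{C}}$, which commutes with these connectives --- these steps only take meets/joins/complements of the values returned by the hypothesis and need no distributivity. The crux is the quantifier case, say $\phi=\exists\vec{x}\,\theta(\vec{x},\vec{v})$ with $\vec{x}$ a \emph{finite} block of variables (this is the only place where the restriction to $\Linf$, rather than $\Linff$, is used); write $\vec{m}=\nu(\vec{v})$. Using that substitution of constants for $\vec{x}$ commutes with $(-)_{\mathcal{C}}$, we have $\phi_{\mathcal{C}}=\bigvee_{\vec{c}\in\mathcal{C}^{\vec{x}}}(\theta_{\mathcal{C}})(\vec{c}/\vec{x})$. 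Now $\Qp{\mathrm{QE}_{\mathcal{C}}}^{\mathcal{M}}=1_{\mathsf{B}}$ unwinds to $\bigvee_{c\in\mathcal{C}}\Qp{a=c^{\mathcal{M}}}_{\mathsf{B}}=1_{\mathsf{B}}$ for each $a\in M$, so, distributing the \emph{finite} meet indexed by $\vec{x}$ over these joins, $\bigvee_{\vec{c}\in\mathcal{C}^{\vec{x}}}\bigwedge_{j}\Qp{a_{j}=c_{j}^{\mathcal{M}}}_{\mathsf{B}}=1_{\mathsf{B}}$ for every tuple $\vec{a}\in M^{\vec{x}}$. Combining this with the coherence of Boolean values under equality --- $\bigwedge_{j}\Qp{a_{j}=c_{j}^{\mathcal{M}}}\wedge\Qp{\theta_{\mathcal{C}}(\vec{a},\vec{m})}\le\Qp{\theta_{\mathcal{C}}(\vec{c}^{\mathcal{M}},\vec{m})}$, which holds for all $\Linff$-formulae by an easy induction from the equality and congruence axioms (A), (B) of Definition \ref{def:Bvalmod} --- we obtain, for each $\vec{a}$, $\Qp{\theta_{\mathcal{C}}(\vec{a},\vec{m})}=\bigvee_{\vec{c}}\bigl(\bigwedge_{j}\Qp{a_{j}=c_{j}^{\mathcal{M}}}\wedge\Qp{\theta_{\mathcal{C}}(\vec{a},\vec{m})}\bigr)\le\bigvee_{\vec{c}}\Qp{\theta_{\mathcal{C}}(\vec{c}^{\mathcal{M}},\vec{m})}$. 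Taking the join over $\vec{a}\in M^{\vec{x}}$ and invoking the inductive hypothesis on $\theta$ yields $\Qp{\exists\vec{x}\,\theta(\vec{x},\vec{m})}^{\mathcal{M}}=\bigvee_{\vec{a}}\Qp{\theta(\vec{a},\vec{m})}^{\mathcal{M}}=\bigvee_{\vec{a}}\Qp{\theta_{\mathcal{C}}(\vec{a},\vec{m})}\le\bigvee_{\vec{c}}\Qp{\theta_{\mathcal{C}}(\vec{c}^{\mathcal{M}},\vec{m})}=\Qp{\phi_{\mathcal{C}}}^{\mathcal{M}}$; the reverse inequality is immediate since each $\vec{c}^{\mathcal{M}}$ is a member of $M^{\vec{x}}$. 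The universal case $\forall\vec{x}\,\theta$ is dual, interchanging $\bigvee$ and $\bigwedge$ (or one may derive it from the existential case together with the clauses for $\neg$ in Definition \ref{MovNegIns}).

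The main obstacle, and the reason the statement is confined to $\Linf$, is precisely this last step: the identity $\bigvee_{\vec{c}\in\mathcal{C}^{\vec{x}}}\bigwedge_{j}\Qp{a_{j}=c_{j}^{\mathcal{M}}}=1$ is obtained by distributing a meet over a family of joins, and in an arbitrary complete Boolean algebra this is licensed only when the meet is finite, i.e.\ only when the quantifier block $\vec{x}$ is finite. For $\Linff$-formulae with infinite quantifier strings one would need full (complete) distributivity, which fails in general complete Boolean algebras, so neither this argument nor --- presumably --- the conclusion in the stated form survives. Beyond this point everything is routine bookkeeping: that $(-)_{\mathcal{C}}$ commutes with the connectives and with substitution of constants for variables, and the standard equality-coherence lemma for Boolean valued models.
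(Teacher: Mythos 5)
Your proposal is correct and follows essentially the same route as the paper: restrict to $\mathsf{B}\restriction b$ with $b=\Qp{\mathrm{QE}_{\mathcal{C}}}$ so that $\bigvee_{c\in\mathcal{C}}\Qp{m=c^{\mathcal{M}}}=1$, multiply $\Qp{\theta(\vec{a},\vec{m})}$ by this join, distribute, and use the equality-congruence property of Boolean values to bound it by $\bigvee_{\vec{c}}\Qp{\theta(\vec{c}^{\mathcal{M}},\vec{m})}$, then close the induction over connectives and (finite) quantifier blocks. You are merely more explicit than the paper about the induction scaffolding and the substitution lemma, and your diagnosis of why the argument is confined to $\Linf$ (finite meets distributing over joins) matches the remark the paper makes immediately after its proof.
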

\begin{proof}
Assume in a $\bool{B}$-model $\mathcal{M}$ with domain $M$ for $\mathrm{L}\cup\mathcal{C}=\mathrm{L}(\mathcal{C})$,
$b=\Qp{\mathrm{QE}_{\mathcal{C}}}>0$.
W.l.o.g. by considering $\mathcal{M}$ a $\bool{B}\restriction b$-model, we can assume $b=1$.
Then for all $m\in M$
\[
\bigvee_{c\in\mathcal{C}}\Qp{m=c^{\mathcal{M}}}=1.
\]
Hence, for a $\mathrm{L}(\mathcal{C})_{\infty\infty}$-formula $\phi(x)$ in \emph{unique} displayed free variable $x$ and all $m\in M$:
\begin{equation}\label{eqn:keyidQE}
\Qp{\phi(m)}=\Qp{\phi(m)}\wedge\bigvee_{c\in\mathcal{C}}\Qp{c^{\mathcal{M}}=m}=\bigvee_{c\in\mathcal{C}}(\Qp{\phi(m)}\wedge\Qp{c^{\mathcal{M}}=m})\leq
\bigvee_{c\in\mathcal{C}}\Qp{\phi(c^{\mathcal{M}})}.
\end{equation}
Therefore
\[
\Qp{\exists x\,\phi(x)}=\bigvee_{m\in M}\Qp{\phi(m)}\leq
\bigvee_{m\in M}\bigvee_{c\in\mathcal{C}}\Qp{\phi(c^{\mathcal{M}})}=\bigvee_{c\in\mathcal{C}}\Qp{\phi(c^{\mathcal{M}})}\leq \bigvee_{m\in M} \Qp{\phi(m)} = \Qp{\exists x\,\phi(x)}.
\]
By using the infinitary De Morgan laws for negation, we also get
\[
\Qp{\forall x\,\phi(x)}=
\Qp{\neg\exists x\,\neg\phi(x)}=\neg\bigvee_{c\in\mathcal{C}}\neg\Qp{\phi(c^{\mathcal{M}})}=
\bigwedge_{c\in\mathcal{C}}\Qp{\phi(c^{\mathcal{M}})}.
\]
Going back to the $\bool{B}$-valued model $\mathcal{M}$, we retain that
\[
\Qp{\mathrm{QE}_{\mathcal{C}}}\wedge \Qp{\exists x\,\phi(x)}=\Qp{\mathrm{QE}_{\mathcal{C}}} \wedge \bigvee_{c\in\mathcal{C}}\Qp{\phi(c^{\mathcal{M}})}
\]
and
\[
\Qp{\mathrm{QE}_{\mathcal{C}}}\wedge \Qp{\forall x\,\phi(x)}=\Qp{\mathrm{QE}_{\mathcal{C}}} \wedge \bigwedge_{c\in\mathcal{C}}\Qp{\phi(c^{\mathcal{M}})}
\]
holds for all $\bool{B}$-valued model $\mathcal{M}$ for $\mathrm{L}(\mathcal{C})$ and for all 
$\Linff$-formulae $\phi(x)$ in the \emph{unique} free variable $x$.

By systematically applying these identities, we obtain that for any $\Linf$-sentence $\psi$
in all Boolean valued models $\mathcal{M}$ for $\mathrm{L}(\mathcal{C})$
\[
\Qp{\mathrm{QE}_{\mathcal{C}}}\leq\Qp{\psi\leftrightarrow \psi_{\mathcal{C}}}.
\]
\end{proof}

We do not expect the Fact to hold for $\Linff$-sentences: the natural generalization of the above proof to an existential quantifier on an infinite string $(x_i:i\in I)$ would require to establish the identity 
\[
\Qp{\phi(m_i:i\in I)}=\bigvee_{\vec{c}\in\mathcal{C}^I}\Qp{\phi(c_i^{\mathcal{M}}:i\in I)}
\]
corresponding to the key identity (\ref{eqn:keyidQE}).
This may not hold in Boolean algebras $\bool{B}$ for which
the distributive law
\[
\bigwedge_{i\in I}\bigvee_{c\in\mathcal{C}}\Qp{m_i=c}=
\bigvee_{f\in \mathcal{C}^I}\bigwedge_{i\in I}\Qp{m_i=f(i)}
\]
fails.

Nonetheless, there is a cheap way around this obstacle for $\Linff$-theories, rooted in the Model Existence Theorem \ref{ModExiThe}:

%

\begin{fact}
Let $T$ be an $\Linff$ theory. TFAE:
\begin{itemize}
\item
$T$ is Boolean consistent;
\item
$T_{\mathcal{C}}$ is Boolean consistent for some fresh (for $\mathrm{L}$) set of constants $\mathcal{C}$.
\end{itemize} 
\end{fact}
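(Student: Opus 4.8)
\medskip

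\noindent\textbf{Proof proposal.} The plan is to prove the two implications separately; only the converse (recovering $T$ from $T_{\mathcal{C}}$) needs the Model Existence Theorem~\ref{ModExiThe}, while the direct implication is a short expansion argument. Both halves rest on the following observation, which I would record first: \emph{if $\mathcal{Q}$ is a $\bool{B}$-valued model for $\mathrm{L}(\mathcal{C})$ whose interpretation map $c\mapsto c^{\mathcal{Q}}$ is surjective onto $\dom(\mathcal{Q})$, then $\Qp{\phi}^{\mathcal{Q}}=\Qp{\phi_{\mathcal{C}}}^{\mathcal{Q}}$ for every $\mathrm{L}(\mathcal{C})_{\infty\infty}$-formula $\phi$ and every assignment into $\dom(\mathcal{Q})$.} This is proved by induction on $\phi$; the atomic, negation, $\bigwedge$, $\bigvee$ cases are immediate, and for an existential block $\exists\overline v\,\psi$ one computes
\[
\Qp{\exists\overline v\,\psi}^{\mathcal{Q}}=\bigvee_{\overline a\in\dom(\mathcal{Q})^{\overline v}}\Qp{\psi[\overline v\mapsto\overline a]}^{\mathcal{Q}}=\bigvee_{\overline c\in\mathcal{C}^{\overline v}}\Qp{\psi_{\mathcal{C}}(\overline c)}^{\mathcal{Q}}=\Qp{(\exists\overline v\,\psi)_{\mathcal{C}}}^{\mathcal{Q}},
\]
the middle equality using the inductive hypothesis and the fact that $\overline c\mapsto\overline c^{\mathcal{Q}}$ maps $\mathcal{C}^{\overline v}$ \emph{onto} $\dom(\mathcal{Q})^{\overline v}$, so that the two joins run over one and the same set of Boolean values; the universal step is dual. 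The crucial point is that no infinitary distributive law intervenes.

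For the implication from $T$ to $T_{\mathcal{C}}$: assume $T$ has a $\bool{B}$-valued model $\mathcal{M}$ with domain $M$. I would take $\mathcal{C}$ to be a set of constants fresh for $\mathrm{L}$ with $|\mathcal{C}|\geq|M|$ (and $\mathcal{C}$ infinite), fix a surjection $\mathcal{C}\twoheadrightarrow M$, and let $\mathcal{M}^{+}$ be the $\mathrm{L}(\mathcal{C})$-expansion of $\mathcal{M}$ interpreting the new constants along it. Then $c\mapsto c^{\mathcal{M}^{+}}$ is surjective onto $\dom(\mathcal{M}^{+})=M$, so $\Qp{\bool{QE}_{\mathcal{C}}}^{\mathcal{M}^{+}}=1_{\bool{B}}$ (every $m\in M$ equals some $c^{\mathcal{M}^{+}}$), and by the observation $\Qp{\phi_{\mathcal{C}}}^{\mathcal{M}^{+}}=\Qp{\phi}^{\mathcal{M}^{+}}=\Qp{\phi}^{\mathcal{M}}=1_{\bool{B}}$ for every $\phi\in T$; hence $\mathcal{M}^{+}\models T_{\mathcal{C}}$.

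For the converse, suppose $T_{\mathcal{C}}$ is Boolean consistent for some fresh $\mathcal{C}$ (which we may take infinite) and fix a $\bool{B}$-valued model $\mathcal{N}_{0}$ with $\Qp{\bigwedge T_{\mathcal{C}}}^{\mathcal{N}_{0}}=1_{\bool{B}}$. The key idea is to manufacture out of $\mathcal{N}_{0}$ a consistency property that, fed into Theorem~\ref{ModExiThe}, yields a model of $T_{\mathcal{C}}$ \emph{whose domain is exactly $\mathcal{C}$}. Concretely, I would let $S$ be the family of all sets $s$ of $\mathrm{L}(\mathcal{C})_{\infty\infty}$-sentences such that every sentence of $s$ is either quantifier free or equal to $\bool{QE}_{\mathcal{C}}$, and $\Qp{\bigwedge s}^{\mathcal{N}_{0}}>0_{\bool{B}}$, and then verify that $S$ is a consistency property for $\mathrm{L}(\mathcal{C})_{\infty\infty}$ with $T_{\mathcal{C}}\in S$. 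The clauses (Con), (Ind.1), (Ind.2), (Ind.4), (Str.1), (Str.2) are the routine verifications that evaluation in a fixed model produces a consistency property (using that $\phi\neg$ and $\neg\phi$ get the same value, that the value is monotone under joins, and that substitution of equals is respected — the last holding for quantifier-free formulae by induction from the defining axioms on $\bool{B}$-valued models). The restriction built into $S$ makes (Ind.5) \emph{vacuous}, since no member of $S$ contains an existential sentence, and leaves (Ind.3) to be checked only for $\bool{QE}_{\mathcal{C}}=\forall x\,\bigvee_{c\in\mathcal{C}}x=c$, where it holds because $\mathcal{N}_{0}\models\bool{QE}_{\mathcal{C}}$ forces $\Qp{\bigvee_{c\in\mathcal{C}}d=c}^{\mathcal{N}_{0}}=1_{\bool{B}}$ for every constant $d$ of $\mathrm{L}(\mathcal{C})$; the same identity yields (Str.3), and closure under the rules keeps every state in $S$ within the quantifier-free-plus-$\bool{QE}_{\mathcal{C}}$ fragment. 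Applying Theorem~\ref{ModExiThe} to $S$, the theory $T_{\mathcal{C}}\in S$ holds in a Boolean valued model $\mathcal{P}$ with domain $\mathcal{C}$; then $c\mapsto c^{\mathcal{P}}$ is surjective onto $\dom(\mathcal{P})$, so the observation gives $\Qp{\phi}^{\mathcal{P}}=\Qp{\phi_{\mathcal{C}}}^{\mathcal{P}}=1$ for every $\phi\in T$, whence $\mathcal{P}\!\upharpoonright\!\mathrm{L}\models T$ and $T$ is Boolean consistent.

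The crux is the converse implication, and the real obstacle is the failure, in a general $\bool{B}$, of the infinitary distributive law $\bigwedge_{i\in I}\bigvee_{c\in\mathcal{C}}\Qp{a_{i}=c}=\bigvee_{f\in\mathcal{C}^{I}}\bigwedge_{i\in I}\Qp{a_{i}=f(i)}$ — exactly the failure flagged right after the preceding Fact. Because of it one cannot simply argue that an arbitrary $\bool{B}$-valued model of $T_{\mathcal{C}}$ already models $T$: reconstructing $\Qp{\exists\overline v\,\psi}^{\mathcal{N}}$ from $\Qp{(\exists\overline v\,\psi)_{\mathcal{C}}}^{\mathcal{N}}$ at an \emph{infinite} block $\overline v$ would need precisely that law. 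Routing through the Model Existence Theorem dissolves the difficulty, because in a model whose domain \emph{is} $\mathcal{C}$ the identity $\Qp{\phi}=\Qp{\phi_{\mathcal{C}}}$ holds for the trivial reason that the two index sets coincide after evaluation. The step that demands genuine care is the verification that the pared-down family $S$ — quantifier-free sentences together with $\bool{QE}_{\mathcal{C}}$ — is still closed under all of (Ind.1)--(Ind.5) and (Str.1)--(Str.3); this is where one must watch the interaction of $\bool{QE}_{\mathcal{C}}$ with the original constants of $\mathrm{L}$.
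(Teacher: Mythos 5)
Your proof follows essentially the same route as the paper's: the forward direction by expanding the given model with constants naming all of its domain (the paper simply takes $\mathcal{C}=M$ with each element naming itself, your surjection $\mathcal{C}\twoheadrightarrow M$ is the same idea), and the converse by feeding a consistency property extracted from the consistency of $T_{\mathcal{C}}$ into the Model Existence Theorem so as to land in a model with domain $\mathcal{C}$, where $\Qp{\phi}=\Qp{\phi_{\mathcal{C}}}$ holds for exactly the reason you isolate in your preliminary observation. The consistency properties differ in detail — the paper uses the ``semantic'' one (finite $s$ with $s\cup T_{\mathcal{C}}$ Boolean consistent), you use positivity of $\Qp{\bigwedge s}$ in a fixed model $\mathcal{N}_0$ restricted to a syntactic fragment — and your restriction making (Ind.5) vacuous is a careful touch that the paper's terser argument glosses over. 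The one point to patch: as written your $S$ need not be closed under (Str.2), because substituting a constant $c$ for a constant $d\in\mathcal{C}$ occurring in $\bool{QE}_{\mathcal{C}}$ produces a universally quantified sentence that is neither quantifier free nor literally $\bool{QE}_{\mathcal{C}}$, hence falls outside your declared fragment. The fix is routine: enlarge the fragment to admit all constant-substitution instances of $\bool{QE}_{\mathcal{C}}$; these are still single universal quantifications of disjunctions of atomic formulae, so (Ind.3) applied to them still yields quantifier-free sentences, (Ind.5) remains vacuous, and positivity of the extended states follows from the standard inequality $\Qp{c=d}\wedge\Qp{\phi(d)}\leq\Qp{\phi(c)}$.
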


\begin{proof}
If $\mathcal{M}$ is a model of $T$ with domain $M$, then $T_{M}$ is Boolean consistent, as it holds in $\mathcal{M}$, since the identities
\[
\Qp{\forall (x_i:i\in I)\,\phi(x_i:i\in I)}=\bigwedge_{f\in M^I}\Qp{\phi(f(i)^{\mathcal{M}}:i\in I)}=\bigwedge_{f\in M^I}\Qp{\phi(f(i):i\in I)}
\]
and  
\[
\Qp{\exists (x_i:i\in I)\,\phi(x_i:i\in I)}=\bigvee_{f\in M^I}\Qp{\phi(f(i)^{\mathcal{M}}:i\in I)}=\bigvee_{f\in M^I}\Qp{\phi(f(i):i\in I)}
\]
clearly
hold in $\mathcal{M}$. 

Conversely, if $T_{\mathcal{C}}$ is Boolean consistent,
\[
\bp{s: \,s \text{ is a finite set of $\mathrm{L}(\mathcal{C})$-sentences and }s\cup T_{\mathcal{C}}\text{ is Boolean consistent}}
\]
is a consistency property which (by the $\mathsf{QE}_{\mathcal{C}}$-axiom) satisfies Clause (Str.3) of Def. \ref{def:ConProInf} for the set of constants $\mathcal{C}$; hence, by Thm. \ref{ModExiThe}, it has a Boolean valued model with domain $\mathcal{C}$ which also validates $T_{\mathcal{C}}$.
In such a Boolean valued model the identities
\[
\Qp{\forall (x_i:i\in I)\,\phi(x_i:i\in I)}=\bigwedge_{f\in\mathcal{C}^I}\Qp{\phi(f(i)^{\mathcal{M}}:i\in I)}=\bigwedge_{f\in\mathcal{C}^I}\Qp{\phi(f(i):i\in I)}
\]
and  
\[
\Qp{\exists (x_i:i\in I)\,\phi(x_i:i\in I)}=\bigvee_{f\in\mathcal{C}^I}\Qp{\phi(f(i)^{\mathcal{M}}:i\in I)}=\bigvee_{f\in\mathcal{C}^I}\Qp{\phi(f(i):i\in I)}
\]
hold by definition. Hence, in this model, one has that $T$ holds as well.
\end{proof}
In particular, while in this paper we will not pay particular attention to the syntactic nature of the $\Linff$-theories under examination, when studying the problem of their Boolean consistency, one can appeal to the above facts to reduce it to the case of quantifier free $\Linff$-theories satisfying $\bool{QE}_{\mathcal{C}}$. This makes the task of showing their consistency somewhat simpler (at least notationally). It also explains why the notions of subsentence of an $\Linff$-sentence and of conservative strengthening to be introduced later in Section \ref{subsec:constrfincns} are natural.


\section{The Compactness Theorem for $\Linff$}\label{sec:compactness}

Suppose $\{\psi_i : i \in I\}$ is a set of first order sentences. Compactness for first order logic says that every finite subset of $\{\psi_i : i \in I\}$ has a Tarski model if and only if $\bigwedge_{i \in I} \psi_i$ has a Tarski model. With this formulation the result does not generalize to $\Linff$ (nor to $\Linf$), even if one replaces Tarski models with Boolean valued models (see Example \ref{faicom}).\medskip

We produce a generalization to $\Linff$ of the compactness theorem; toward this aim we introduce the key concept of \textbf{conservative strengthening} and the corresponding notion of being a \textbf{finitely conservative} set of $\Linff$-sentences. We will show in Section \ref{FinConBooConTarCon} that (with minor twists) for families of \emph{first order sentences} being finitely conservative  is a natural refinement of being finitely consistent (and in a precise sense an equivalent reformulation of this concept).
\medskip

Replacing \emph{finitely consistent} with \emph{finitely conservative} and \emph{Tarski} semantics with \emph{Booolean valued} semantics, compactness generalizes naturally to $\Linff$ (see Theorem \ref{conservative}).

\subsection{The failure of the simplistic notion of compactness for $\Linff$} 

The following example shows that the natural generalization of compactness fails already for the smallest non-trivial fragment of $\Linff$, i.e. for a countable $\mathrm{L}_{\omega_1\omega}$-theory.

\begin{example}[The failure of compactness] \label{faicom} Let $\mathcal{L}$ be a language containing constants $\{c_n : n \in \omega\} \cup \{c_{\omega}\}$ and the equality relation symbol. Denote by $T$ the theory
	\begin{gather*}
		\{ c_n \neq c_\omega : n \in \omega\} \ \cup \\
		\{\bigvee_{n \in \omega} c_\omega = c_n\}.
	\end{gather*}
	
	\ding{95} \textbf{Failure of compactness for Tarski semantics.} Let us argue that this theory has no Tarski model, yet it has models for every finite subset. $T$ has no Tarski model since any realization $c_m = c_\omega$ of the axiom 

	\[
		\bigvee_{n \in \omega} c_\omega = c_n
	\]

	\noindent in a Tarski model would contradict axiom $c_m \neq c_\omega$ from the first family of sentences. Nonetheless, if we consider any finite subset $t \subset T$, then (by interpreting in a Tarski structure with infinite domain, $c_\omega$ the same way as $c_n$ for $n$ bigger than the highest index appearing inside $t$) we can obtain a Tarski model of $t$. Hence, we have an inconsistent $\mathcal{L}_{\omega_1\omega}$-theory for Tarski semantics all whose finite subsets are Tarski consistent. That is, the simplistic notion of compactness fails for the logic $\mathcal{L}_{\omega_1\omega}$ with respect to Tarski semantics (and more generally for any other $\mathcal{L}_{\kappa \lambda}$).\medskip
	
	\ding{95} \textbf{Failure of compactness for Boolean valued semantics.} Let us argue that the same theory shows the failure of compactness for the semantics given by Boolean valued models. Assume $\mathcal{M}$ is a $\bool{B}$-valued model of $T$. Then 
	
	\[
		\Qp{c_n \neq c_\omega} = 1_\bool{B}
	\]

	\noindent for every $n \in \omega$ and 
	
	\[
		\Qp{\bigwedge_{n \in \omega} c_n \neq c_\omega}_\bool{B} = \bigwedge_{n \in \omega} \Qp{c_n \neq c_\omega}_\bool{B} = 1_\bool{B}.
	\]

	\noindent But at the same time
	
	\[
		\neg \Qp{\bigwedge_{n \in \omega} c_n \neq c_\omega}_\bool{B} = \Qp{\neg \bigwedge_{n \in \omega} c_n \neq c_\omega}_\bool{B} = \Qp{\bigvee_{n \in \omega} c_n = c_\omega}_\bool{B} = 1_\bool{B},
	\]

	\noindent a contradiction.\medskip

	\ding{95} Note that if we consider the theory $T'$ obtained from $T$ by adding to $T$ all the sentences given by the
conjunction of any of its finite subset, $T$ and $T'$ are logically equivalent and $T'$ is still Boolean inconsistent even if all its finite subsets are Tarski consistent. It is worth taking into account this trivial observation while parsing the definition of finite conservativity to follow. 
\end{example}

\subsection{Conservative strengthening and finite conservativity} \label{subsec:constrfincns}

We introduce here the key revision of the notion of finite consistency needed to generalize the compactness theorem to $\Linff$.

\subsubsection{What are the subsentences of an $\Linff$-sentence?}
\begin{notation}\label{not:subformulaeconprop}
Given an $\mathrm{L}_{\kappa\lambda}$-sentence $\psi$ its family of \emph{subsentences} is obtained by considering a fresh set of constants $\mathcal{C}$ of size $\lambda$ and all the subsentences obtained by a subformula $\phi(\vec{x})$ in displayed string of 
free variables $\vec{x}$ by substituting to $\vec{x}$ a string of constants $\vec{c}\in(\mathcal{D}\cup\mathcal{C})^{\vec{x}}$.
\end{notation}

There is an ambiguity in the choice of (the size of) $\mathcal{C}$, as an $\mathrm{L}_{\kappa\lambda}$-sentence $\psi$ is also an $\mathrm{L}_{\delta\theta}$-sentence for any $\delta\geq\kappa$ and $\theta\geq\lambda$, hence according to the choice of $\mathcal{C}$ the notion of subsentence of $\psi$ (and the size of the family of subsentence of $\psi$) changes. The ambiguity can be removed by fixing $\kappa,\lambda$ large enough according to the context. Typically, we shall consider $\Linff$-theories consisting of sentences $\psi$ which are all in some fixed $\mathrm{L}_{\kappa\lambda}$ and the notion of subsentence of each such $\psi$ will be computed relative to a fixed $\mathcal{C}$ of size $\lambda$. The context in which we use this notation will clarify how to solve the ambiguity.

The above convention on subsentences is somewhat divergent from the standard terminology, but is natural when taking into account that the Boolean consistency problem for $T$ is equivalent to the Boolean consistency problem for the quantifier free theory $T_{\mathcal{C}}$ (as outlined in Section \ref{subsec:qfversionofT}).

\begin{remark}
Note that a subsentence of $\psi$ relative to the fresh set of constant $\mathcal{C}$
according to Notation \ref{not:subformulaeconprop} is naturally in correspondence with a subformula of $\psi_{\mathcal{C}}$ (according to Definition \ref{def:QECaxiom}) and conversely. Hence, determining which finite sets of subsentences of $\psi$ are Boolean consistent with $\psi$ amounts to study which finite sets of subformulae of  $\psi_{\mathcal{C}}$ are Boolean consistent with $\psi$.
\end{remark}

The following remark (and its proof) are useful only for those readers interested in getting some sharper bounds on the logical/set theoretic complexity of the theories we shall consider in the sequel.
\begin{remark}\label{rem:boundonsubsent}
Let $\kappa$ be a regular uncountable cardinal and $\psi$ be an $\mathrm{L}_{\kappa\lambda}$-sentence for a language $\mathrm{L}$ which has a set of constants $\mathcal{D}$ of size $\delta$. Then the set of subsentences of $\psi$ has size stricly smaller than $\max\bp{\kappa,((\delta+\lambda)^{<\lambda})^+}$.

\end{remark}

\begin{proof}
Note that an $\mathrm{L}_{\kappa\lambda}$-formula on a language of size $\delta$ can be seen as a labelled well-founded tree with branching of size strictly less than $\kappa$ and where the leaves (i.e. its atomic subformulae) can be chosen on the set of atomic formulae of $\mathrm{L}$. Such a well founded tree is an object of size and rank strictly less than $\kappa$, since $\kappa$ is regular and uncountable. On the other hand the label of a node are uniquely determined when the node is associated to an infinitary boolean connective ($\bigwedge, \bigvee,\neg$) and the branching of this node is bounded below $\kappa$, while the label specifies the string of variables which are quantified when the node is associated to a infinitary quantifier ($\exists \vec{x},\forall \vec{x}$) and the node is not splitting in this case (also in case of $\neg$ the node is not splitting). 


Hence, for a given $\mathrm{L}_{\kappa\lambda}$-sentence $\psi$, its set of subformulae has size equal to that of the tree $T$ coding the sentence, with its associated labeling, i.e. some function with domain $T$ and range in  $\lambda^{<\lambda}$.
Note that $T$ has size less than $\kappa$ since it is well-founded and its branching nodes have always less than $\kappa$-many successors.

Each subformula $\phi(\vec{x})$ in displayed free variables of $\psi$ is associated to a unique node and gives the possible subsentences of the form 
$\phi(\vec{c})$ which can be obtained from it by substituting $\vec{x}$ by a string of constants $\vec{c}\in(\mathcal{D}\cup\mathcal{C})^{\vec{x}}$.

The subsentences attached to $\phi(\vec{x})$ form a set of size  bounded by
$(\delta+\lambda)^{\lambda}$. Putting all together, the possible subsentences of $\psi$ form a set whose size is bounded by $\gamma\cdot (\delta+\lambda)^{<\lambda}$, where $\gamma$ is the size of $\psi$. The conclusion follows, since $\gamma<\kappa$.
\end{proof}

\subsubsection{Conservative strenghtening}

\begin{definition}
\label{constre}
	Let $\psi_0$ and $\psi_1$ be $\Linff$-sentences. We say that $\psi_1$ is a \emph{conservative strengthening} of $\psi_0$ if:

	\begin{enumerate}
		\item $\psi_1 \vdash \psi_0$ and
		\item for any finite set $s$ of subsentences of $\psi_0$ (according to Notation \ref{not:subformulaeconprop}), $s \cup \{\psi_0\}$ is Boolean consistent if and only if $s \cup \{\psi_1\}$ is Boolean consistent.
	\end{enumerate}
\end{definition}

To argue for the naturalness and non-ambiguity of the notion of \emph{conservative strengthening}, 
we need to show that if $\psi_0$ is an $\mathrm{L}_{\kappa\lambda}$-sentence, and $\theta\geq\lambda$, the notion of conservative strengthening is independent of the decision whether to compute the notion of subsentence (according to Notation \ref{not:subformulaeconprop}) of $\psi_0$ using the fresh set of constants $\mathcal{C}$ of size $\lambda$ or instead choosing to do it with a larger set of fresh constants of size $\theta$. This gives that the above definition is well posed, even if ambiguity has been left on the size of the fresh set of constants $\mathcal{C}$ used to compute the subsentences of $\psi_0$.
It occurs due to the following:
\begin{fact}
Let $\psi_1\vdash\psi_0$ with $\psi_1$ Boolean consistent and $\psi_0$ an $\mathrm{L}_{\kappa\lambda}$-sentence.
Let also $\mathcal{C}_0$ be a fresh set of constants for $\mathrm{L}$ of size $\lambda$ and 
$\mathcal{C}_1$ be a fresh set of constants for $\mathrm{L}$ of size $\theta\geq\lambda$.
TFAE:
\begin{itemize}
\item
$\psi_1$ is a conservative strenghthening of $\psi_0$ relative to the subsentences of $\psi_0$ computed using $\mathcal{C}_0$ according to Notation \ref{not:subformulaeconprop};
\item
$\psi_1$ is a conservative strenghthening of $\psi_0$ relative to the subsentences of $\psi_0$ computed using $\mathcal{C}_1$ according to Notation \ref{not:subformulaeconprop}.
\end{itemize} 
\end{fact}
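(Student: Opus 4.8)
The plan is to show that the notion of conservative strengthening does not depend on the chosen size of the fresh constant set, by proving that a finite set $s$ of subsentences of $\psi_0$ built using the larger set $\mathcal{C}_1$ is Boolean consistent with $\psi_0$ (resp. with $\psi_1$) if and only if the finite set $s'$ obtained by uniformly renaming the constants of $s$ that lie outside $\mathrm{L}$ is Boolean consistent with $\psi_0$ (resp. $\psi_1$), where $s'$ uses only constants from $\mathcal{C}_0$. Since $s$ is finite and only finitely many fresh constants occur in $s$, and $\mathcal{C}_0$ is infinite (of size $\lambda\geq\omega$), there is always an injection of the fresh constants appearing in $s$ into $\mathcal{C}_0$; this is the elementary combinatorial core of the argument.

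First I would fix the setup: let $c_1,\dots,c_k\in\mathcal{C}_1$ be the finitely many fresh constants occurring in the sentences of $s$, pick distinct $d_1,\dots,d_k\in\mathcal{C}_0$, and let $\sigma$ be the bijection $c_j\mapsto d_j$ extended by the identity on $\mathrm{L}$. Then $\sigma$ induces a renaming on $\mathrm{L}(\mathcal{C}_1)$-sentences; since $\psi_0,\psi_1$ are $\mathrm{L}$-sentences they are fixed by $\sigma$, and $s^\sigma=\bp{\phi^\sigma:\phi\in s}$ is a finite set of subsentences of $\psi_0$ relative to $\mathcal{C}_0$. The key observation is that renaming constants is a harmless operation for Boolean consistency: if $\mathcal{M}$ is a $\bool{B}$-valued model of $\psi_0\wedge\bigwedge s$ in the signature $\mathrm{L}(\mathcal{C}_1)$, then reinterpreting by $c_j^{\mathcal{M}'}:=d_j$-is-not-quite-it; rather one takes the same domain $M$ and sets $d_j^{\mathcal{M}^\sigma}:=c_j^{\mathcal{M}}$ (and the old $d_j$, if they occurred, get thrown away since they don't appear in $s^\sigma$ and $\psi_0$), obtaining a $\bool{B}$-valued model of $\psi_0\wedge\bigwedge s^\sigma$ with the same truth values. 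The same map inverted handles the converse direction. This gives: $s\cup\bp{\psi_0}$ is Boolean consistent iff $s^\sigma\cup\bp{\psi_0}$ is, and likewise with $\psi_1$.

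Now I would assemble the equivalence. Suppose $\psi_1$ is a conservative strengthening of $\psi_0$ relative to $\mathcal{C}_0$; to check the condition relative to $\mathcal{C}_1$, take any finite set $s$ of $\mathcal{C}_1$-subsentences of $\psi_0$, form $s^\sigma$ as above, and chain the equivalences: $s\cup\bp{\psi_0}$ Boolean consistent $\Leftrightarrow$ $s^\sigma\cup\bp{\psi_0}$ Boolean consistent $\Leftrightarrow$ (by hypothesis) $s^\sigma\cup\bp{\psi_1}$ Boolean consistent $\Leftrightarrow$ $s\cup\bp{\psi_1}$ Boolean consistent. The clause $\psi_1\vdash\psi_0$ is part of the hypothesis and is independent of the constants. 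For the reverse implication one runs the identical argument starting from a finite set of $\mathcal{C}_0$-subsentences, mapping into $\mathcal{C}_1$ via any injection $\mathcal{C}_0\hookrightarrow\mathcal{C}_1$ (possible since $\lambda\leq\theta$); alternatively one simply notes that $\mathcal{C}_0$-subsentences are a special case of $\mathcal{C}_1$-subsentences up to renaming, so the $\mathcal{C}_1$-condition formally implies the $\mathcal{C}_0$-condition by the same renaming lemma.

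The main obstacle, such as it is, is purely bookkeeping: making precise that the fresh constants are genuinely interchangeable, i.e. that a $\bool{B}$-valued model in a signature with more constants restricts/expands freely along a bijection of the constant symbols without altering any Boolean truth value, and that ``subsentence of $\psi_0$ relative to $\mathcal{C}$'' is stable under such bijections. Both are immediate from Definition \ref{def:Bvalmod} (a $\bool{B}$-valued model is just a domain plus interpretations, and the interpretation of a constant is an arbitrary element of $M$) and from Notation \ref{not:subformulaeconprop} (a subsentence is a subformula with free variables filled by \emph{any} constants, so the set of subsentences is closed under renaming of the fresh constants). There is no real analytic content; the only thing to watch is that $s$ is \emph{finite}, which is what guarantees the required injection into the infinite set $\mathcal{C}_0$ exists. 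I would therefore present this as a short lemma on invariance of Boolean consistency under renaming of constants, followed by the two-line chaining of equivalences above.
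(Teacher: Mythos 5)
Your overall strategy---a renaming lemma showing that Boolean consistency is invariant under a bijective relabelling of the fresh constants, followed by chaining the resulting equivalences---is exactly the route the paper takes: it moves a model of $\psi_0\wedge\bigwedge s$ from $\mathrm{L}(\mathcal{C}_1)$ to $\mathrm{L}(\mathcal{C}_0)$ along an injection $f$ of constants, applies the $\mathcal{C}_0$-hypothesis, and moves the resulting model of $\psi_1\wedge\bigwedge s$ back along an extension of $f^{-1}$ to a surjection $\mathcal{C}_1\to\mathcal{C}_0$. However, there is one concrete error, and it sits precisely at the point you single out as ``the only thing to watch.'' You claim that because $s$ is finite, only finitely many fresh constants occur in $s$, and that this finiteness is what guarantees the required injection into $\mathcal{C}_0$. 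That is false once $\lambda>\omega$: by Notation \ref{not:subformulaeconprop}, a single subsentence of an $\mathrm{L}_{\kappa\lambda}$-sentence is $\phi(\vec{c})$ for a subformula $\phi(\vec{x})$ whose string of free variables may be infinite (any length $<\lambda$), so one subsentence alone can already mention infinitely many constants of $\mathcal{C}_1$. Since the Fact is only delicate when $\lambda>\omega$ (for $\lambda=\omega$ every subformula has finitely many free variables anyway), your argument as written misses the intended case.

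The gap is repairable by replacing ``finitely many'' with the correct bound: the set of fresh constants occurring in $s$ is a finite union of strings each of length $<\lambda$, hence has size $<\lambda$, while $\vp{\mathcal{C}_0}=\lambda$, so the injection of the occurring constants into $\mathcal{C}_0$ still exists. This is exactly what the hypothesis on $\vp{\mathcal{C}_0}$ is calibrated for, and it is the point the paper's closing remark flags as crucial: the free variables of a subformula of $\psi_0$ number at most the minimum of $\vp{\mathcal{C}_0}$ and $\vp{\mathcal{C}_1}$, and this is what grants the bijection between the two families of substituted fresh constants. With that substitution your renaming lemma and the chaining of equivalences go through, and your treatment of the reverse direction via an injection $\mathcal{C}_0\hookrightarrow\mathcal{C}_1$ is fine.
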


In particular  (sticking to the notation of the Fact) a set of fresh constants of size large enough to accomodate the substitutions of all the possible strings of free variables occurring in some finite set $s$ of subformulae of $\psi_0$ is enough to test whether $\psi_1$ is a conservative strengthening of $\psi_0$ according to any possible choice of sets of fresh constants of that or larger size. The proof does not have any special feature, but is long, mainly for notational reasons.

\begin{proof}
	
	\item[\ding{95}]We give in details the proof of the implication from top to bottom. 
	
	\begin{itemize}
		\item Assume $\psi_1$ is a conservative strenghtening of $\psi_0$ relative to the subsentences of $\psi_0$ computed using constants from $\mathcal{C}_0$. We have to prove that $\psi_1$ is a conservative strengthening of $\psi_0$ relative to the subsentences of $\psi_0$ computed using constants from $\mathcal{C}_1$. 
	
		\item Assume $s$ is a finite set of subsentences of $\psi_0$ relative to $\mathcal{C}_1$ such that 
	
	\[
		\psi_0 \wedge \bigwedge s
	\]

	\noindent is Boolean consistent. We need to check that
	
	\[
		\psi_1 \wedge \bigwedge s
	\]

	\noindent is also Boolean consistent. 
	
		\item Let $(c_{j1} : j \in J) \in \mathcal{C}_1^J$ denote the constants from $\mathcal{C}_1$ occuring in $\bigwedge s$. Since $\psi_0$ is an $\mathrm{L}_{\kappa\lambda}$-sentence, $J$ has size strictly less than $\lambda$. Denote $\theta(\vec{v}) = \bigwedge s (\vec{v})$ the $\mathrm{L}_{\kappa\lambda}$-formula such that 
	
	\[
		\bigwedge s = \theta \big( \vec{v} \mapsto (c_{j1} : j \in J) \big).
	\]

	\item By hypothesis there exists a $\bool{B}$-model  for $\mathrm{L}(\mathcal{C}_1)$ of $\psi_0 \wedge \bigwedge s$. Denote it as $\mathcal{M}^{\mathcal{C}_1}$. Let $\mathcal{M}$ be the restriction of $\mathcal{M}^{\mathcal{C}_1}$ to the language $\mathrm{L}$. Fix also $(c_{j0} : j \in J)$ an injective sequence in $\mathcal{C}_0$. We will expand $\mathcal{M}$ to an $\mathrm{L}(\mathcal{C}_0)$-structure $\mathcal{M}^{\mathcal{C}_0}$ such that 
	
	\[
		\Qp{\psi_0 \wedge \theta \big( \vec{v} \mapsto (c_{j0} : j \in J) \big)}^{\mathcal{M}^{\mathcal{C}_0}} = 1_{\bool{B}}.
	\]

	\item Fix an injection $f:\mathcal{C}_0 \to \mathcal{C}_1$ mapping $c_{j0}$ to $c_{j1}$ for all $j\in J$ (it does not matter at all what $f$ does outside this sequence, since no other constants from $\mathcal{C}_1$ appear in $\psi_0 \wedge \theta(c_{j1} : j \in J)$). This is possible since $J$ has size strictly smaller than $\theta$, $\lambda$.

	Define $\mathcal{M}^{\mathcal{C}_0}$ as the expansion of $\mathcal{M}$ to the language $\mathrm{L}(\mathcal{C}_0)$ where the interpretation of every $c \in \mathcal{C}_0$ is given by the interpretation of $f(c)$ in $\mathcal{M}^{\mathcal{C}_1}$. The key point is that the constant $c_{j0}$ is interpreted by the same element in $\mathcal{M}^{\mathcal{C}_0}$ as $c_{j1}$ is in $\mathcal{M}^{\mathcal{C}_1}$. Hence,
	
	\[
		\Qp{\theta(c_{j0} : j \in J)}^{\mathcal{M}^{\mathcal{C}_0}} = \Qp{\theta(c_{j1} : j \in J)}^{\mathcal{M}^{\mathcal{C}_1}}.
	\]

	\noindent Therefore, $\mathcal{M}^{\mathcal{C}_0}$ is a model of 
	
	\[
		\psi_0 \wedge \bigwedge s (c_{j0} : j \in J).
	\]

	\item By hypothesis $\psi_1$ is a conservative strengthening of $\psi_0$ relative to $\mathcal{C}_0$. Hence, 
	
	\[
		\psi_1 \wedge \bigwedge s (c_{j0} : j \in J)
	\]

	\noindent has a $\bool{C}$-model $\mathcal{N}^{\mathcal{C}_0}$. 
	
	\item Now we extend the partial function $f^{-1}$ to a surjection 
$g:\mathcal{C}_1 \to \mathcal{C}_0$.
The very same argument for replacing the constants $(c_{j0} : j \in J)$ with $(c_{j1} : j \in J)$ ensures that we can use $g$ to transform $\mathcal{N}^{\mathcal{C}_0}$ into an $\mathrm{L}(\mathcal{C}_1)$-model $\mathcal{N}^{\mathcal{C}_1}$ of 
	
	\[
		\psi_1 \wedge \bigwedge s
	\]

	\noindent (by setting $c^{\mathcal{N}^{\mathcal{C}_1}}=g(c)^{\mathcal{N}^{\mathcal{C}_0}}$ for all $c\in\mathcal{C}_1$), as was to be shown.
	
	\end{itemize}
	
	\item[\ding{95}] The implication from bottom to top follows the same pattern, by reversing the steps in which we use the functions $f,g$ (i.e. using $g$ to move from $\mathcal{M}^{\mathcal{C}_0}$ to $\mathcal{M}^{\mathcal{C}_1}$ and $f$ to move from $\mathcal{N}^{\mathcal{C}_1}$ to $\mathcal{N}^{\mathcal{C}_0}$).
\end{proof}

	Note that what is crucially used in the above proof is that the free variables occurring in a subformula of $\psi_0$ have size bounded below the minimum of the size of $\mathcal{C}_0,\mathcal{C}_1$; this grants the existence of a bijection between the fresh constants $\bp{c_{jl}:j\in J}$ for $l=0,1$ which substitute free variables of the subformula. This is essential for the above proof to work.

In general if $\psi_1$ is a conservative strengthening of $\psi_0$ and $s$ is a finite set of subsentences of $\psi_0$ consistent with it, the models of $\psi_1\wedge\bigwedge s$ may not have much to share with those of $\psi_0\wedge\bigwedge s$.
Below we outline a simpler case in which $\psi_1$ is a conservative strengthening of $\psi_0$:
\begin{notation}
	Let $\psi_0,\,\psi_1$ be $\Linff$-formulae with $\psi_1$ possibly in a richer signature than $\psi_0$. $\psi_1$ is \emph{strongly conservative} over $\psi_0$ if any model of $\psi_0$ can be expanded to a model of $\psi_1$.
\end{notation}

The above is a natural strengthening of the notion of conservative strengthening (the proof is left to the reader):

\begin{fact}
If $\psi$ is strongly conservative over $\phi$ and $\psi \vdash \phi$, then $\psi$ is a conservative strengthening of $\phi$.
\end{fact}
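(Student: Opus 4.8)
The plan is to check the two clauses of Definition \ref{constre} directly; clause~(1), namely $\psi\vdash\phi$, is part of the hypothesis, so all the work lies in clause~(2). Fix a finite set $s$ of subsentences of $\phi$ (formed, per Notation \ref{not:subformulaeconprop}, using a fresh set of constants $\mathcal{C}$, which we may harmlessly take disjoint from the richer signature of $\psi$ as well). We must show $s\cup\{\phi\}$ is Boolean consistent if and only if $s\cup\{\psi\}$ is Boolean consistent. Throughout, ``model'' in the notion of strong conservativity is read as \emph{Boolean valued} model, which is the reading consistent with the surrounding usage (and, via full/mixing issues, essentially the only reading under which the Fact holds at this level of generality).

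First I would dispatch the easy direction. Suppose $s\cup\{\psi\}$ is Boolean consistent, witnessed by a $\bool{B}$-valued model $\mathcal{M}$ with $\Qp{\psi}^{\mathcal{M}}=1_\bool{B}$ and $\Qp{\sigma}^{\mathcal{M}}=1_\bool{B}$ for all $\sigma\in s$. Since $\psi\vdash\phi$, soundness of the proof system for Boolean valued semantics (the direction $\vdash\,\Rightarrow\,\models_{\mathrm{BVM}}$ of Theorem \ref{them:boolcompl}) gives $\psi\models_{\mathrm{BVM}}\phi$, whence $\Qp{\phi}^{\mathcal{M}}=1_\bool{B}$ as well. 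Thus the very same $\mathcal{M}$ witnesses that $s\cup\{\phi\}$ is Boolean consistent.

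For the converse, suppose $s\cup\{\phi\}$ is Boolean consistent, witnessed by a $\bool{B}$-valued model $\mathcal{M}$ in the signature $\mathrm{L}(\mathcal{C})$, where $\mathrm{L}$ is the signature of $\phi$. Let $\mathcal{M}_0$ be the reduct of $\mathcal{M}$ to $\mathrm{L}$; as $\phi$ is an $\mathrm{L}$-sentence, $\Qp{\phi}^{\mathcal{M}_0}=1_\bool{B}$, so $\mathcal{M}_0$ is a $\bool{B}$-valued model of $\phi$. By strong conservativity of $\psi$ over $\phi$, expand $\mathcal{M}_0$ to a $\bool{B}$-valued model $\mathcal{N}$ of $\psi$ in the richer signature $\mathrm{L}'$ of $\psi$ --- with the same domain, the same Boolean algebra, and the interpretations of the symbols of $\mathrm{L}$ unchanged --- so $\Qp{\psi}^{\mathcal{N}}=1_\bool{B}$. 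Now re-expand $\mathcal{N}$ to the signature $\mathrm{L}'(\mathcal{C})$ by interpreting each constant of $\mathcal{C}$ exactly as in $\mathcal{M}$, obtaining $\mathcal{N}'$. Then $\mathcal{N}'$ and $\mathcal{M}$ agree on the whole of $\mathrm{L}(\mathcal{C})$ (same domain, same interpretations of every symbol of $\mathrm{L}$ and every constant of $\mathcal{C}$), and every element of $s$ is an $\mathrm{L}(\mathcal{C})$-sentence; hence $\Qp{\sigma}^{\mathcal{N}'}=\Qp{\sigma}^{\mathcal{M}}=1_\bool{B}$ for all $\sigma\in s$, while $\Qp{\psi}^{\mathcal{N}'}=\Qp{\psi}^{\mathcal{N}}=1_\bool{B}$. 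So $\mathcal{N}'$ witnesses that $s\cup\{\psi\}$ is Boolean consistent, completing clause~(2).

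There is no real obstacle here --- the argument is routine --- and the only points needing care are bookkeeping of signatures: that the fresh constants $\mathcal{C}$ used to build the subsentences of $\phi$ may be taken disjoint from the (possibly richer) signature of $\psi$, and that ``expand'' is used in the strict sense (add interpretations for new symbols; leave the domain, the Boolean algebra, and the old interpretations fixed), so that the truth value of any $\mathrm{L}(\mathcal{C})$-sentence is preserved when passing among $\mathcal{M}$, $\mathcal{M}_0$, $\mathcal{N}$, and $\mathcal{N}'$.
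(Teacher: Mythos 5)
Your proof is correct, and since the paper explicitly leaves this Fact's proof to the reader, your argument is precisely the intended routine verification: the forward direction by soundness of $\vdash$ for Boolean valued semantics, and the converse by reducting the witnessing model to the signature of $\phi$, invoking strong conservativity to expand it to a model of $\psi$, and then reinstating the fresh constants interpreting the subsentences in $s$. The two bookkeeping caveats you flag (reading ``model'' as Boolean valued model, and taking the fresh constants disjoint from the richer signature of $\psi$) are exactly the right ones.
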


\subsubsection{Finite conservativity}

\begin{definition}
\label{def:finconservativity}
A theory $\{\psi_i : i \in I\}$ is \textbf{finitely conservative} if:
	\begin{itemize}
		\item at least one $\psi_i$ for $i\in I$ is \emph{Boolean consistent}, and
		\item for every $J_0\subseteq J_1$
		finite and non-empty subset of $I$, $\bigwedge_{i\in J_1}\phi_i$ is a \emph{conservative strengthening} of $\bigwedge_{i\in J_0}\phi_i$.
	\end{itemize}	
\end{definition}
Finite conservativity is a natural strengthening of finite (Boolean) consistency:

\begin{fact}
	Any finitely conservative theory $\{\psi_i : i \in I\}$ is finitely Boolean consistent.
\end{fact}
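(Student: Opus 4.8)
The statement to prove is: any finitely conservative theory $\{\psi_i : i \in I\}$ is finitely Boolean consistent, i.e. every non-empty finite subset of $\{\psi_i : i \in I\}$ has a Boolean valued model. I would argue directly from the three clauses of Definition \ref{def:finconservativity}. Fix a non-empty finite $J \subseteq I$. Since the theory is closed under finite conjunctions, the sentence $\chi = \bigwedge_{i\in J}\psi_i$ is itself one of the $\psi_k$ in the family, so it suffices to show that each $\psi_k$ in the family is Boolean consistent; then $\chi$ is Boolean consistent and so is the finite set $\{\psi_i : i \in J\}$ (a Boolean valued model of $\chi$ assigns value $1$ to each conjunct $\psi_i$, $i \in J$). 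Thus the whole claim reduces to: \emph{every} member of a finitely conservative family is Boolean consistent, given that \emph{at least one} member is.

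**Key step.** Let $\psi_{i_0}$ be the member guaranteed Boolean consistent by the first clause, and let $\psi_k$ be an arbitrary member. Set $J = \{i_0, k\}$ and let $\chi = \psi_{i_0} \wedge \psi_k$, which lies in the family by closure under finite conjunctions. By the third clause, $\chi$ is a conservative strengthening of $\psi_{i_0}$. Apply clause (2) of Definition \ref{constre} with the empty finite set $s = \emptyset$ of subsentences of $\psi_{i_0}$: then $\{\psi_{i_0}\} \cup \emptyset$ is Boolean consistent (by hypothesis on $\psi_{i_0}$), hence $\{\chi\} \cup \emptyset$ is Boolean consistent, i.e. $\chi$ is Boolean consistent. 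Since $\chi \vdash \psi_k$ (the first requirement of being a conservative strengthening, or simply because $\chi$ is a conjunction with $\psi_k$ as a conjunct), any Boolean valued model witnessing $\Qp{\chi} = 1$ also gives $\Qp{\psi_k} = 1$ by the correctness theorem for the proof system relative to Boolean valued semantics (Theorem \ref{them:boolcompl}), so $\psi_k$ is Boolean consistent. As $\psi_k$ was arbitrary, every member of the family is Boolean consistent.

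**Conclusion.** Now for an arbitrary non-empty finite $J \subseteq I$, the sentence $\bigwedge_{i \in J}\psi_i$ belongs to the family, hence is Boolean consistent by the previous paragraph; a Boolean valued model $\mathcal{M}$ on a complete Boolean algebra $\bool{B}$ with $\Qp{\bigwedge_{i\in J}\psi_i}^{\mathcal{M}}_{\bool{B}} = 1_{\bool{B}}$ satisfies $\Qp{\psi_i}^{\mathcal{M}}_{\bool{B}} = 1_{\bool{B}}$ for every $i \in J$, so $\{\psi_i : i \in J\}$ is Boolean consistent. This is exactly finite Boolean consistency of $\{\psi_i : i \in I\}$.

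**Expected obstacle.** There is essentially no obstacle here; the proof is a direct unwinding of the definitions. The only point requiring a moment's care is the legitimacy of taking $s = \emptyset$ in the definition of conservative strengthening — one should note that the empty set is a (degenerate) finite set of subsentences of $\psi_0$, so that clause (2) of Definition \ref{constre} genuinely forces Boolean consistency of $\psi_0$ to transfer to $\chi$. Everything else is bookkeeping with finite conjunctions and the trivial fact that a Boolean valued model of a conjunction is a Boolean valued model of each conjunct.
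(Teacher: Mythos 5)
Your proof is correct and follows essentially the same route as the paper: both arguments reduce to the observation that the conjunction of a finite subfamily together with the consistent $\psi_{i_0}$ lies in the family, is a conservative strengthening of $\psi_{i_0}$ by the third clause, and hence inherits Boolean consistency via the $s=\emptyset$ instance of the definition. The paper just does this in one step (conjoining $\psi_{i_0}$ with all of $J$ at once) rather than via your intermediate claim that every member of the family is Boolean consistent; the difference is purely organizational.
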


\begin{proof}
	Let $J \subset I$ be finite. We need to argue $\{\psi_i : i \in J\}$ is Boolean consistent. By hypothesis we can fix $i_0 \in I$ such that $\psi_{i_0}$ is Boolean consistent. By assumption $\bigwedge(\{\psi_i : i \in J\} \cup \{\psi_{i_0}\})$ is a conservative strengthening of $\psi_{i_0}$, which is Boolean consistent. Therefore, so is $\bigwedge(\{\psi_i : i \in J\} \cup \{\psi_{i_0}\})$. In particular $\{\psi_i : i \in J\}$ is Boolean consistent. 
\end{proof}

It is often convenient to reformulate finite conservativity limiting our attention to sets of sentences which are closed under finite conjunctions.
The following Fact is almost self-evident and is left to the reader:

\begin{fact} \label{fac:equivformfincons}
A theory $\{\psi_i : i \in I\}$ is finitely conservative if and only if letting $T$ be the family
\[
\bp{\bigwedge_{i\in u}\psi_i: u \text{ is a finite subset of }I },
\] we have that
\begin{itemize}
		\item at least one $\phi\in T$ is \emph{Boolean consistent},
		\item for every $s$
		finite and non-empty subset of $T$, $\bigwedge s$ is a \emph{conservative strengthening} of $\phi$ for all $\phi\in s$.
	\end{itemize}	
\end{fact}

The following two remarks are worth of attention as they point out the subtle differences between finite conservativity and finite consistency.

\begin{remark}
	Consider the setting from Example \ref{faicom}. Let us argue that $T$ is not finitely conservative. Let $t \subseteq T$ be the finite subset given by the two sentences
	
	\[
		\bigvee_{n \in \omega} c_\omega = c_n \ \ \text{and} \ \ c_\omega \neq c_0.
	\]

	\noindent Let us argue that $\bigwedge t$ is not a conservative strengthening of $\bigvee_{n \in \omega} c_\omega = c_n$. In order to do so, we need to find a subsentence of $\bigvee_{n \in \omega} c_\omega = c_n$ that is consistent with $\bigvee_{n \in \omega} c_\omega = c_n$, but is not consistent with $\bigwedge t$.
	
	We have that $c_\omega = c_0$ is a consistent subsentence of $\bigvee_{n \in \omega} c_\omega = c_n$. Nonetheless, $c_\omega = c_0$ is not consistent with $\bigwedge t$, since $c_\omega \neq c_0 \in t$. Thus, $\bigwedge t$ is not a conservative strengthening of $\bigvee_{n \in \omega} c_\omega = c_n$, and $T$ is not finitely conservative.  
\end{remark}

	Also: 
\begin{remark}
	There is no reason a priori to expect the following to be true: 
	\begin{quote}
		\emph{
			Assume that a theory $T=\bp{\phi_i:i\in I}$ which is not closed under finite conjunctions is such that for all finite subsets $J$ of $I$, $\bigwedge_{i\in J} \psi_i$ is a conservative strengthening of each $\psi_i$ for $i \in J$. 
		Then $T$ is finitely conservative.
		}
	\end{quote}
	The problem is that there could be $J_0\subset J_1$ finite subsets of $I$ such that $\bigwedge_{i\in J_1}\psi_i$ is a conservative strengthening of each $\psi_i$ for $i\in J_1$, but not of $\bigwedge_{i\in J_0}\psi_i$. 
\end{remark}

Note in contrast that a theory is finitely consistent if and only if its closure under logical consequence is.

\subsection{The Compactness Theorem for $\Linff$}

\begin{theorem}[Boolean Compactness for $\Linff$] \label{conservative}
	Assume $T$ is an $\Linff$-theory such that at least one of its sentences is Boolean consistent. Then $T$ is finitely conservative if and only if $\bigwedge T$ is a conservative strenghtening of $\bigwedge s$ for any $s$ non-empty finite subset of $T$.
\end{theorem}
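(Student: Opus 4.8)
The plan is to prove both directions separately. The forward direction ($T$ finitely conservative $\Rightarrow$ $\bigwedge T$ is a conservative strengthening of each $\psi\in T$) is the substantial one and should follow from the Boolean Compactness Theorem as stated in Theorem~\ref{thm:boolcomp0} (the version for families, which gives that $\bigwedge_{i\in I}\psi_i$ is Boolean consistent and a conservative strengthening of each $\psi_i$). So for the forward direction I would simply invoke that theorem: since $T$ is finitely conservative, $\bigwedge T = \bigwedge_{i\in I}\psi_i$ is a conservative strengthening of each $\psi_i\in T$, which is exactly what is wanted. The real content is therefore deferred to the proof of Theorem~\ref{thm:boolcomp0}, and this theorem is essentially a convenient repackaging; I would present the forward direction as a one-line appeal to Theorem~\ref{thm:boolcomp0}.

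For the converse direction, assume $T$ is closed under finite conjunctions, at least one $\psi\in T$ is Boolean consistent, and $\bigwedge T$ is a conservative strengthening of every $\psi\in T$. I must verify the three clauses of Definition~\ref{def:finconservativity}. The first two clauses (some $\psi_i$ Boolean consistent; closure under finite conjunctions) are hypotheses, so nothing to do. For the third clause, fix a finite $J\subseteq I$; I need $\bigwedge_{i\in J}\psi_i$ to be a conservative strengthening of each $\psi_j$, $j\in J$. Set $\chi = \bigwedge_{i\in J}\psi_i$; since $T$ is closed under finite conjunctions, $\chi\in T$. Provability $\chi\vdash\psi_j$ is clear. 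For the consistency-preservation clause: let $s$ be a finite set of subsentences of $\psi_j$ with $\psi_j\wedge\bigwedge s$ Boolean consistent; I want $\chi\wedge\bigwedge s$ Boolean consistent. The key observation is that a subsentence of $\psi_j$ is also a subsentence of $\bigwedge T$ (since $\psi_j$ is a subformula of $\bigwedge T$, modulo the choice of a sufficiently large fresh constant set $\mathcal{C}$ — here I would cite the earlier Fact on independence of the notion of conservative strengthening from the size of $\mathcal{C}$). By hypothesis $\bigwedge T$ is a conservative strengthening of $\psi_j$, so $\bigwedge T\wedge\bigwedge s$ is Boolean consistent; since $\bigwedge T\vdash\chi$, we get $\chi\wedge\bigwedge s$ Boolean consistent by correctness (a weakening of a Boolean consistent theory's conjunction stays consistent). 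Conversely, if $\chi\wedge\bigwedge s$ is Boolean consistent, then since $\chi\vdash\psi_j$ we get $\psi_j\wedge\bigwedge s$ Boolean consistent. This gives the biconditional in clause~(2) of Definition~\ref{constre}, so $\chi$ is a conservative strengthening of $\psi_j$, completing the verification.

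The main obstacle I anticipate is the bookkeeping around the fresh constant sets used to define subsentences: a finite $s$ of subsentences of $\psi_j$ uses constants substituted for the free variables of a subformula of $\psi_j$, and I must ensure the same $s$ counts as a legitimate finite set of subsentences of $\bigwedge T$ for the same choice (or a compatible choice) of $\mathcal{C}$. Since $\psi_j$ is literally a subformula of $\bigwedge T$ (it sits one level below the top conjunction), its subformulae are subformulae of $\bigwedge T$, and as long as $\mathcal{C}$ is chosen large enough to accommodate the free variables — which it is, being of size $\lambda$ where $\bigwedge T\in\mathrm{L}_{\kappa\lambda}$ for appropriate $\kappa,\lambda$ — the subsentences match up. This is routine given the Fact already proved in Section~\ref{subsec:constrfincns} on the robustness of conservative strengthening under enlarging $\mathcal{C}$, but it deserves an explicit sentence rather than being swept under the rug. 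Apart from that, both directions are short.
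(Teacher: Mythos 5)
Your converse direction is correct and matches what the paper treats as immediate (its proof says only the forward direction ``requires a detailed argument''): given that $\bigwedge T$ conservatively strengthens $\psi_j$, a finite set $s$ of subsentences of $\psi_j$ consistent with $\psi_j$ is consistent with $\bigwedge T$, hence with the weaker $\chi=\bigwedge_{i\in J}\psi_i$, and the reverse implication of the biconditional follows from $\chi\vdash\psi_j$. The constant-bookkeeping you flag is in fact a non-issue there: Definition \ref{constre} quantifies over subsentences of the \emph{weaker} sentence $\psi_j$, so your $s$ is already of exactly the right form to apply the hypothesis that $\bigwedge T$ is a conservative strengthening of $\psi_j$.

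The forward direction, however, contains a genuine gap: your one-line appeal to Theorem \ref{thm:boolcomp0} is circular. Theorem \ref{thm:boolcomp0} is the Introduction's announcement of the main result, and its only proof in the paper \emph{is} the proof of Theorem \ref{conservative}; there is no independent argument for it elsewhere. Since the forward direction carries the entire mathematical content of the theorem, your proposal omits that content. What is actually needed is to set $\Psi=\bigwedge_{i\in I}\psi_i$ and construct the consistency property $S$ whose elements are the sets $\{\Psi\}\cup t$ with $t$ a finite Boolean consistent set of $\mathrm{L}(\mathcal{C})$-sentences containing some $\psi_{i_t}$ and consisting entirely of subsentences of that $\psi_{i_t}$. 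One then verifies the clauses of Definition \ref{def:ConProInf}; the only delicate case is decomposing the infinitary conjunction $\Psi$: given $\{\Psi\}\cup t\in S$ anchored at $\psi_p$ and a new conjunct $\psi_j$, closure under finite conjunctions gives $\psi_p\wedge\psi_j=\psi_k\in T$, and finite conservativity (applied to the anchor $\psi_p$ and the finite set of its subsentences in $t$) shows $\psi_k\wedge\bigwedge t$ is still Boolean consistent, so $\{\Psi\}\cup t\cup\{\psi_j,\psi_k\}\in S$ with new anchor $\psi_k$. The Model Existence Theorem \ref{ModExiThe} then produces, for each finite set $s$ of subsentences of $\psi_i$ consistent with $\psi_i$, a Boolean valued model of $\{\Psi\}\cup s\cup\{\psi_i\}$, which is precisely the conservativity of $\Psi$ over $\psi_i$. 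None of this can be recovered from the statement of Theorem \ref{thm:boolcomp0} without assuming what is to be proved.
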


\begin{proof}
	Only the direction 

	\begin{quote}
		\emph{ 
			$T$ is finitely conservative $\Rao$ $\bigwedge T$ is a conservative strengthening of $\bigwedge s$ for any $s$ finite non-empty subset of $T$
		} 
	\end{quote}
	requires a detailed argument.\medskip

	Assume $T = \{\psi_i : i \in I\}$ is finitely conservative (and thus also finitely consistent). W.l.o.g we may assume that $T$ contains as elements the conjunctions of its finite subsets (by Fact \ref{fac:equivformfincons}). 
Hence under this assumption on $T$ we need to show that 
\begin{quote}
		\emph{ 
			$T$ is finitely conservative $\Rao$ $\bigwedge T$ is a conservative strengthening of $\psi_i$ for any $i\in I$.
		} 
	\end{quote}

Let
		
	\[
		\Psi = \bigwedge_{i \in I} \psi_i.
	\]
	Let $\kappa$ be a regular cardinal such that $\Psi$ is an $\mathrm{L}_{\kappa\kappa}$-sentence and $\mathcal{D}$ be the set of constants in $\mathrm{L}$.
	\begin{enumerate} 
	\setlength\itemsep{1em}
		\item Consider the language $\mathrm{L}(\mathcal{C})$ with $\mathcal{C}$ a set of constants of size $\kappa$ which is fresh for the language $\mathrm{L}$.
The family $S$ is given by the sets $\{\Psi\} \cup t$ such that:
	
			\begin{itemize}
				\item $t$ is a finite set of sentences from $\mathrm{L}(\mathcal{C})$,
				\item $t$ is Boolean consistent,
				\item there exists $i_t \in I$ such that:
					\begin{itemize}
						\item $\psi_{i_t} \in t$,
						\item $\theta$ is a subsentence of $\psi_{i_t}$ for the language $\mathrm{L}$ for each $\theta \in t$ (according to Notation \ref{not:subformulaeconprop} and using -as in there- the constants in $\mathcal{C}\cup\mathcal{D}$ to determine the possible subsentences of $\psi_{i_t}$).
					\end{itemize}
			\end{itemize}

		\item Let us first show that if $S$ is a consistency property as in Def. \ref{def:ConProInf}
(with $\mathcal{C}$ the set of constants to which Clauses \ref{def:conspropInd5} (Ind.5) and \ref{def:conspropStr3} (Str.3) of that Definition apply), then $\Psi$ is a conservative strengthening of $\psi_i$ for any $i\in I$.
			
			\medskip
			\begin{itemize}
				\item Let $s$ be a finite set of subsentences of $\psi_i$. We need to show that if $s \cup \{\psi_i\}$ is Boolean consistent, then so is $s \cup \{\Psi\}$. Toward this aim, note that if $s \cup \{\psi_i\}$ is Boolean consistent, then $\{\Psi\} \cup s \cup \{\psi_i\} \in S$, as $i$ is exactly the $i_t$ for $t = s\cup\bp{\psi_i}$ witnessing that $\{\Psi\} \cup t \in S$. By the Boolean Model Existence Theorem \ref{ModExiThe}, this implies the existence of a Boolean valued model of $\{\Psi\} \cup t$, which in particular is a model of $\{\Psi\} \cup s$.
			\end{itemize}

		\item Now we show that $S$ is a consistency property. 
			
			\medskip
			\begin{itemize}
				\item Suppose $\{\Psi\} \cup t \in S$. We need to argue that for each clause in the definition of consistency property \ref{def:ConProInf}, the relevant subsentence under consideration in the clause belongs to an extension of $\{\Psi\} \cup t$ in $S$.
				\item If we deal with sentences in $t$, then $t$ being Boolean consistent allows to deal with all clauses in the definition of a consistency property (here we take advantage of our choice of $\mathcal{C}$ at the same time as the fresh set of constants used to compute the subsentences of some $\psi_i$ as in Notation \ref{not:subformulaeconprop} and as the fresh set of constants used to verify Clauses \ref{def:conspropInd5} (Ind.5) and \ref{def:conspropStr3} (Str.3) of Def. \ref{def:ConProInf} for $S$). 
				\item Therefore, we only need to deal with the case of $\Psi$, i.e. the only sentence that is not inside of $t$ (and which is an infinitary conjunction).
				\item Fix $j \in I$. We need to argue that there is $r \in S$ such that $\{\Psi\} \cup t \cup \{\psi_j\} \subset r$. 
				\item By definition of $S$, there is $p=i_t \in I$ such that $\psi_p \in t$ and all $\theta$ in $t$ are subsentences of $\psi_p$.
				\item Since the family $\{\psi_i : i \in I\}$ is finitely conservative, we have that $\psi_p\wedge\psi_j$ is a conservative strengthening of $\psi_p$.
				\item Since the family $\{\psi_i : i \in I\}$ is closed under finite conjunctions, we have that $\psi_p \wedge \psi_j = \psi_k$ for some $k\in I$. 
				\item Since $\psi_k$ is a conservative strengthening of $\psi_p$, for any $s$ finite set of $\psi_p$-subsentences, $\psi_k \wedge \bigwedge s = \psi_p\wedge \psi_j \wedge \bigwedge s$ is Boolean consistent if and only if so is $\psi_p \wedge \bigwedge s$. 
				\item Since $t$ is Boolean consistent, $\psi_p \in t$, and all sentences in $t$ are subsentences of $\psi_p$, we have that 
				
					\[
						\bigwedge t=\bigwedge(\bp{\psi_p}\cup t)=\psi_p\wedge\bigwedge t
					\]

					\noindent is Boolean consistent. Therefore, $s = \{\psi_j,\psi_k\} \cup t$ is Boolean consistent. 
				\item Finally, as all sentences in $s$ are subsentences of $\psi_k$, $\{\Psi\} \cup t \cup \bp{\psi_j,\psi_k}$ belongs to $S$.
			\end{itemize}
	\end{enumerate}
\end{proof}

Since the models provided by the Boolean Compactness Theorem are obtained as applications of the Model Existence Theorem, they still retain that every element in the model equals the interpretation of some constant from the language.

\subsection{A non-trivial application of the Boolean compactness theorem}\label{subsec:appcomp}

We show below that if an $\Linf$-sentence $\phi$ is Boolean consistent, then we can produce models of it which realize plenty of other non-trivial constraints. Actually we expect that the Boolean compactness theorem will turn out to be a very powerful tool to produce models which realize certain types and omit others. A much deeper use of the Boolean compactness Theorem is given in \cite[Ch. 4-5]{SantiagoPhd}, where it appears as an essential ingredient of a new proof of Asperò and Schindler's result that Woodin's axiom $(*)$ follows from $\mathsf{MM}^{++}$ \cite{ASPSCH(*)}.

The motivation for the example presented in this section comes from the set theoretic method of forcing, however the arguments do not involve any familiarity with forcing, just with the Boolean Compactness Theorem. The reader wishing to familiarize more with this example is invited to parse through \cite[Ch. 5]{SantiagoPhd}.

\begin{definition} \label{S_psi}
	Let $\phi$ be a Boolean consistent $\Linf$-sentence. Fix an infinite and countable set $\mathcal{C}$ of fresh  constants for $\phi$. The partial order (forcing notion)  $S_\phi$ has as domain the set of $s$ such that
	
		\begin{enumerate}					
			
			\setlength\itemsep{0.5em}

			\item \label{1} $|s| < \omega$,
			
			\item \label{2} all elements in $s$ are proper subsentences of $\phi$ or (negated) atomic $\mathrm{L}(\mathcal{C})$-sentences in which only finitely many constants from $\mathcal{C}$ appear, and
			
			\item \label{3} $\bigwedge (s\cup\bp{\phi})$ is Boolean consistent, 			
		\end{enumerate}		
		
	\noindent ordered by reverse inclusion, i.e.  $s \leq t \Lrao t \subseteq s$.
\end{definition}

Let $\phi$ be a Boolean consistent $\Linf$-sentence. Recall that a set $D \subseteq S_\phi$ is dense if for all $s \in S_\phi$ there exists $t \in D$ such that $s \subseteq t$. 
Since all elements in $S_\phi$ are made up of $\Linf$-sentences, for every subset $D \subset S_\phi$ the following is an $\Linf$-sentence:

	\[
		\bigvee_{s \in D} \bigwedge s.
	\]
	
Now note the following:

\begin{itemize} 
	\item A filter $G$ on $S_\psi$ is such that $\Sigma_G=\bigcup G$ is a finitely consistent theory.
	\item Given a \emph{maximal} filter $G$ on $S_\psi$, one can define a Tarski $\mathrm{L}(\mathcal{C})$-structure $\mathcal{M}_G$ whose elements are the classes $[c]_G=\bp{d: d=c\in \Sigma_G}$ for $c$ a constant symbol of $\mathrm{L}$, and where $R^{\mathcal{M}_G}([c_1]_G,\dots,[c_n]_G)$ holds if and only if $R(c_1,\dots,c_n)\in \Sigma_G$ for $R$ an $n$-ary relation symbol of $\mathrm{L}$.
	\item It is not hard to check that for a maximal filter $G$ on $S_\psi$,
$G\cap D$ is non-empty if and only if $\mathcal{M}_G\models\bigvee_{s \in D} \bigwedge s$.
\end{itemize}
 
We want to show that
	\begin{equation}\label{eqn:vgenfilt}
		\phi\wedge\bigwedge_{D \subseteq S_\phi \text{ dense}} \bigg( \bigvee_{s \in D}\bigwedge s \bigg)
	\end{equation}
is a Boolean consistent sentence whenever $\phi$ is.\medskip	
	
Note that a Tarski model $\mathcal{M}$ of (\ref{eqn:vgenfilt}) induces what in the set-theoretic terminology is called a $V$-generic filter for $S_\phi$ by looking at the $s\in S_\phi$ which hold in $\mathcal{M}$. Such Tarski models cannot exist in the ground universe $V$ whenever $S_\phi$ does not have minimal elements, but they can exist in an extension of the universe obtained by forcing with $S_\phi$. Nonetheless, we now argue that the existence of a (mixing) Boolean valued model of (\ref{eqn:vgenfilt}) can be obtained by an application of the Boolean compactness theorem.

\begin{lemma}
	Let $\phi$ be a Boolean consistent $\Linf$-sentence. Then 

	\[
		\{\phi\} \cup \{ \bigvee_{s \in D}\bigwedge s : D \subset S_\phi \text{ dense}\}
	\] 
	
	\noindent is Boolean consistent. Furthermore, its conjunction is a conservative strengthening of $\phi$.
\end{lemma}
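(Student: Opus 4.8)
The plan is to derive the lemma from the Boolean Compactness Theorem \ref{conservative}. For each dense $D \subseteq S_\phi$ put $\sigma_D := \bigvee_{s \in D} \bigwedge s$, and let $T$ be the closure under finite conjunctions of $\{\phi\} \cup \{\phi \wedge \sigma_D : D \subseteq S_\phi \text{ dense}\}$; thus every member of $T$ has the form $\phi \wedge \sigma_{D_1} \wedge \cdots \wedge \sigma_{D_n}$ with all $D_j$ dense. Then $T$ is closed under finite conjunctions, contains the Boolean consistent sentence $\phi$, and $\bigwedge T$ is logically equivalent to $\bigwedge(\{\phi\} \cup \{\sigma_D : D \subseteq S_\phi \text{ dense}\})$. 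Hence it suffices to prove that $T$ is finitely conservative: Theorem \ref{conservative} then yields that $\bigwedge T$ is a conservative strengthening of every member of $T$, in particular of $\phi$; taking the empty finite set of subsentences in clause (2) of Definition \ref{constre} this forces $\bigwedge T$ to be Boolean consistent; and both properties pass to the logically equivalent theory $\{\phi\} \cup \{\sigma_D : D \text{ dense}\}$.

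So I need to check that for every finite $J$ and every $i_0 \in J$ the sentence $\Theta := \bigwedge_{i \in J} \psi_i = \phi \wedge \sigma_{E_1} \wedge \cdots \wedge \sigma_{E_m}$ (all $E_l$ dense) is a conservative strengthening of $\psi_{i_0} = \phi \wedge \sigma_{D_1} \wedge \cdots \wedge \sigma_{D_k}$, a subconjunction of $\Theta$. The entailment $\Theta \vdash \psi_{i_0}$ and the implication ``$\Theta \wedge \bigwedge s$ Boolean consistent $\Rightarrow \psi_{i_0} \wedge \bigwedge s$ Boolean consistent'' are immediate. For the converse, fix a finite set $s^*$ of subsentences of $\psi_{i_0}$ with $\psi_{i_0} \wedge \bigwedge s^*$ Boolean consistent; I must exhibit a Boolean valued model of $\Theta \wedge \bigwedge s^*$.

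This is where density enters. By the shape of $\psi_{i_0}$ and of the $\sigma_D$'s, every element of $s^*$ is one of $\psi_{i_0}, \phi, \sigma_{D_j}$ (each entailed by $\Theta$), or a proper subsentence of $\phi$, or a (negated) atomic sentence, or $\bigwedge s$ for some $s$ in one of the $D_j$. Let $u$ be the union of the proper-subsentence-of-$\phi$ and (negated) atomic members of $s^*$ together with all the finite sets $s$ for which $\bigwedge s \in s^*$; then $\Theta \wedge \bigwedge s^* \equiv \Theta \wedge \bigwedge u$, and $u$ is a finite set of proper subsentences of $\phi$ and (negated) atomic sentences with $\bigwedge(u \cup \{\phi\})$ Boolean consistent. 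Renaming the finitely many auxiliary constants in $u$ that do not lie in $\mathcal C$ (the set of fresh constants fixed in Definition \ref{S_psi}) to fresh members of $\mathcal C$ — possible since $\mathcal C$ is infinite, and harmless since $\phi$, being an $\mathrm L$-sentence, mentions no constant of $\mathcal C$ — turns $u$ into an element $\bar u$ of $S_\phi$ with $\bigwedge(\bar u \cup \{\phi\})$ Boolean consistent. Applying density of $E_1, \dots, E_m$ one after the other, extend $\bar u$ to some $t \in S_\phi$ containing, for each $l \le m$, a condition of $E_l$. Then $\bigwedge(t \cup \{\phi\})$ is Boolean consistent (third clause of Definition \ref{S_psi}) and entails $\phi$, $\bigwedge \bar u$, and every $\sigma_{E_l}$; expanding a Boolean valued model of it by undoing the renaming at the level of constant interpretations produces a Boolean valued model of $\phi \wedge \bigwedge_{l \le m} \sigma_{E_l} \wedge \bigwedge u = \Theta \wedge \bigwedge u$, hence of $\Theta \wedge \bigwedge s^*$.

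The main difficulty I foresee is not the density argument, which is elementary, but the bookkeeping with auxiliary constants: the constants of $\mathcal C$ that populate the conditions of $S_\phi$ and serve as witnesses for existentials when the Model Existence Theorem \ref{ModExiThe} is invoked through Theorem \ref{conservative}, as opposed to the fresh constants used to form the subsentences of the $\mathrm L(\mathcal C)$-sentences of $T$ in the sense of Notation \ref{not:subformulaeconprop}. The renaming step reconciles the two; it works precisely because $\phi$ itself involves no constant of $\mathcal C$ and because the extension-by-density is carried out entirely inside $S_\phi$, so the renamed constants need only avoid the finitely many constants appearing in $u$, never the potentially unbounded-in-$\mathcal C$ collection of constants scattered across a dense set $D$.
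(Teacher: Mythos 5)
Your proof is correct and follows essentially the same route as the paper's: pass to the closure under finite conjunctions, verify finite conservativity by decomposing a finite set of subsentences into $S_\phi$-conditions plus sentences already entailed by the conjunction, extend the resulting condition by density through the relevant dense sets, and invoke Theorem \ref{conservative}. Your extra care with the renaming of the auxiliary constants of Notation \ref{not:subformulaeconprop} into $\mathcal{C}$ (and your meeting \emph{all} of $E_1,\dots,E_m$ rather than only the ones not already represented in $t$) makes explicit some bookkeeping the paper leaves implicit, but it is the same argument.
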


\begin{proof}
	We show that $T$, the closure under finite conjunctions of 
	
	\[
		\{\phi\} \cup \{ \bigvee_{s \in D}\bigwedge s : D \subset S_\phi \text{ dense}\},
	\] 
		
	\noindent is finitely conservative. Since $\phi$ is Boolean consistent and belongs to $T$, we only need to deal with the third condition in the definition.\medskip
	
	Let $D_1,\ldots,D_k,\ldots,D_m$ be dense sets. Consider the sentence 
	
	\[
		\phi(D_1,\ldots,D_m):= \phi \wedge \bigvee_{s \in D_1} \bigwedge s \wedge \ldots \wedge \bigvee_{s \in D_k} \bigwedge s \ldots \wedge \bigvee_{s \in D_m} \bigwedge s
	\]

	\noindent and $t$ a finite set of subsentences from
	
	\[
		\phi(D_1,\ldots,D_k):= \phi \wedge \bigvee_{s \in D_1} \bigwedge s \wedge \ldots \wedge \bigvee_{s \in D_k} \bigwedge s
	\]

	\noindent consistent with $\phi(D_1,\ldots,D_k)$. We need to argue that $t$ is also Boolean consistent with $\phi(D_1,\ldots,D_m)$. Each element in $t$ is either a subsentence of $\phi$, or 
$\bigvee_{s \in D_i} \bigwedge s$, or $\bigwedge s$ for some $s \in D_i$ for some $i \leq k$.

 	Hence, $t$ can be decomposed as 

	\[
		t' \cup \{\bigwedge s_i:i \in I\}\cup\bp{ \bigvee_{s \in D_j} \bigwedge s: j \in J}
	\] 
	
	\noindent for $I,J\subseteq \bp{1,\dots,k}$ with $t' \in S_\phi$ and $s_i \in D_i$ for $i \in I$. This gives that $\phi\wedge t'\wedge\bigwedge\{\bigwedge s_i:i \in I\}$ is Boolean consistent, hence

	\[
		t'\cup \bigcup\bp{s_i:i\in I} =t^*
	\]
	 
	\noindent is in $S_\phi$. 

Now, using the density of each $D_{k+1},\ldots,D_m$ and of the elements of $\bp{D_j:j\in J}$ in $S_\phi$, letting
$J\cup\bp{k+1,\dots, m}=\bp{i_0,\dots,i_l}$, and  $r_0=t^*$, we can build a decreasing chain $r_h$ for every $h=0,\dots,l$  with $r_h \in D_{i_h}$. Then $r_l \leq t^*$ is in $S_\phi$ and witnesses that $t$ is Boolean consistent with $\phi(D_1,\dots,D_m)$.

	Since $T$ is finitely conservative, the Boolean Compactness Theorem \ref{conservative} ensures that $\bigwedge T$ is Boolean consistent and is a conservative strengthening of $\phi$.	
\end{proof}

\section{Finite conservativity versus Boolean consistency and Tarski consistency}\label{FinConBooConTarCon}
 
This section compares the notion of finite conservativity with those of Boolean and Tarski consistency. For countable $\mathrm{L}_{\omega_1\omega}$-theories the three notions are essentially equivalent, while for uncountable $\mathrm{L}_{\infty\omega}$-theories finite conservativity is equivalent to Boolean consistency, and, hence, weaker than Tarski consistency. 
Note that there is no sensible notion of completeness for Boolean consistent theories which are not Tarski consistent: it is the essential feature of a Boolean valued model which is not a Tarski model that of having some $\phi$ whose Boolean truth value in the model is neither true nor false, note however that $\phi\vee\neg\phi$ has maximum truth value in the model. In such a set-up, the Boolean theory  validated by the model (meant as the family of sentences evaluated as the maximum of the Boolean algebra) cannot be complete.

We close the paper listing some natural questions on the ways to establish the actual equivalence of Boolean consistency with finite conservativity which we are not able to answer.

\subsection{Syntactically complete theories vs Boolean consistent theories}

The comparison between Boolean consistency and finite conservativity is more delicate than it might seem at first sight. Logically speaking they are equivalent, as the Boolean Compactness Theorem \ref{conservative} shows that finite conservativity implies Boolean consistency, while if $T$ is a Boolean consistent theory, then $\bp{\bigwedge T}$ is finitely conservative and logically equivalent to $T$. 

Nonetheless, if $T$ is an $\mathrm{L}_{\kappa\lambda}$-theory of size at least $\kappa$,
$\bigwedge T$ is not an $\mathrm{L}_{\kappa\lambda}$-sentence. Hence, the theory $\bp{\bigwedge T}$ can be seen to be more complex than $T$, raising the question: if $T$ is a Boolean consistent $\mathrm{L}_{\kappa\lambda}$-theory, is there an $\mathrm{L}_{\kappa\lambda}$-theory $T^*$ logically equivalent to $T$ and finitely conservative?

We provide a positive answer to this question for syntactically complete $\mathrm{L}_{\kappa\lambda}$-theories. Note that this does not solve the issue for Boolean consistent theories, as it might not be at all possible to extend a Boolean consistent theory to a syntactically complete one, contrary to the case of Tarski consistency.

\begin{definition}
	Let $\kappa$ be a regular infinite cardinal and $T$ be an $\mathrm{L}_{\kappa\lambda}$-theory. We say that $T$ is coherent (with respect to a proof system for $\mathrm{L}_{\infty\infty}$) if $T$ does not derive a contradiction. We say that $T$ is $(\kappa,\lambda)$-syntactically complete if for every $\mathrm{L}_{\kappa\lambda}$-sentence $\phi$, either $T \vdash \phi$ or $T \vdash \neg \phi$.
\end{definition}

When dealing with $(\kappa,\lambda)$-syntactically complete theories, we usually assume they are closed under logical deduction. Hence, for every $\mathrm{L}_{\kappa\lambda}$-sentence $\phi$, either $\phi \in T$ or $\neg \phi \in T$. Note that for $\kappa=\lambda=\omega$, the above notion is (an apparent strengthening of\footnote{It is the usual notion of syntactically complete and coherent first order theory, if one can show that provable sequents (according the proof system given in Section \ref{subsec:proofsystem}) consisting of first order formulae admit a proof where all formulae appearing in some of its sequents are first order. This is the case once one proves the cut-elimination theorem also for this proof system (a result we have ignored and we do not use in the present paper).}) the usual notion of coherent and syntactically complete first order theory.

\begin{theorem} \label{thm:comthe}
	Let $\kappa$ be a regular infinite cardinal and $T$ be a coherent and $(\kappa,\gamma)$-syntactically complete $\mathrm{L}_{\kappa\gamma}$-theory of size $\lambda$ for a language $\mathrm{L}$ whose set of constants has size $\delta$. Then there exists $T^*$ an $\mathrm{L}_{\theta\gamma}$-theory for 
$\theta=\max\bp{((\delta+\gamma)^{<\gamma})^+,\kappa}$ of size $\lambda$ such that 
	
	\begin{itemize}
		\item $T^*$ is logically equivalent to $T$, and
		\item $T^*$ is finitely conservative.
	\end{itemize}

Furthermore, if $\kappa>(\delta+\gamma)^{<\gamma}$ is regular and uncountable, then $T^*$ is an $\mathrm{L}_{\kappa\gamma}$-theory  which is a subset of $T$.
\end{theorem}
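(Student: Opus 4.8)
The plan is to use $(\kappa,\gamma)$-syntactic completeness to read off from $T$ exactly which finite configurations of subsentences are consistent, and to hard-wire this information into the axioms of $T^*$ so that it cannot be spoiled by conjoining further axioms. Two routine ingredients come first. Since $T$ is coherent, Boolean Completeness (Theorem \ref{them:boolcompl}) makes it Boolean consistent; we may also assume $T$ is deductively closed, so it is closed under finite conjunctions. Syntactic completeness enters like this: if $s$ is a finite set of subsentences of some $\psi\in T$ (formed with a fixed fresh set $\mathcal{C}$ of $\gamma$ constants) and $\bigwedge s(\vec{x})$ denotes the $\mathrm{L}_{\kappa\gamma}$-formula obtained by turning the occurring constants of $\mathcal{C}$ back into a block $\vec{x}$ of $<\gamma$ variables, then $\exists\vec{x}\,\bigwedge s(\vec{x})$ is an $\mathrm{L}_{\kappa\gamma}$-sentence, so $T$ either proves it or proves its negation $\forall\vec{x}\,\neg\bigwedge s(\vec{x})$; call $s$ \emph{$T$-refuted} in the latter case, \emph{$T$-good} otherwise. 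A direct consistency-property argument — introduce fresh constants witnessing $\exists\vec{x}\,\bigwedge s(\vec{x})$ and generalize over them, exactly in the spirit of Clauses (Ind.5), (Str.3) of Definition \ref{def:ConProInf} and of Theorem \ref{ModExiThe} — shows that $T\cup s$ is Boolean consistent whenever $s$ is $T$-good. As every sentence we will put into $T^*$ is a theorem of $T$, this delivers the ``positive'' half of conservativity for free: once an offending $s$ is known to be $T$-good, $T\cup s$, hence any finite conjunction of axioms of $T^*$ together with $s$, is Boolean consistent.

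The ``negative'' half is where the content lies. For a $T$-refuted finite $s$ put $K_s:=\forall\vec{x}\,\neg\bigwedge s(\vec{x})$, a theorem of $T$. Definition \ref{constre} forces the design constraint: whenever $e$ is an axiom of $T^*$ and $s$ is a $T$-refuted finite set of subsentences of $e$ that \emph{some} axiom of $T^*$ can still refute, then $e$ itself must already refute it, for otherwise conjoining that other axiom to $e$ would destroy a consistency $e$ enjoyed. The clean way to meet this is to make every axiom imply every relevant killer. Let $\mathcal{S}$ be the closure of $\bigcup_{\psi\in T}\mathrm{Sub}(\psi)$ under $r\mapsto\mathrm{Sub}(K_r)$, and $D:=\bigwedge\{K_r : r\text{ a }T\text{-refuted finite subset of }\mathcal{S}\}$. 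Using Remark \ref{rem:boundonsubsent} and the regularity of $\theta$, one gets $|\mathcal{S}|<\theta$ as soon as $|T|<\theta$, so in that case $D$ is an $\mathrm{L}_{\theta\gamma}$-sentence and a theorem of $T$; take $T^*$ to be a syntactically padded copy of size exactly $\lambda$ of $\{\psi\wedge D:\psi\in T\}$ (padding with logically redundant conjuncts such as $\bigwedge\varnothing$, which add no new non-tautological subsentences). Because $D$ is a single fixed conjunct, $(\psi_1\wedge D)\wedge(\psi_2\wedge D)\equiv(\psi_1\wedge\psi_2)\wedge D$, so $T^*$ is closed under finite conjunctions; it is logically equivalent to $T$; and every $T$-refuted finite set of subsentences of $\psi\wedge D$ (or of a finite conjunction of such axioms) is already refuted by $D$, so with the positive half we obtain finite conservativity. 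If moreover $\kappa>(\delta+\gamma)^{<\gamma}$ is regular and uncountable then $\theta=\kappa$, a second use of $\kappa>(\delta+\gamma)^{<\gamma}$ in bounding $|\mathcal{S}|$ across the $\omega$-step closure gives $D\in\mathrm{L}_{\kappa\gamma}$, and since $T\vdash D$ and $T$ is deductively closed, $\psi\wedge D\in T$; hence $T^*\subseteq T$.

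The main obstacle is the remaining range $|T|\ge\theta$, inseparable from the \emph{cross-conjunct entanglement} that defeats the naive ``augment each $\psi$ by its own killers and close under finite conjunctions'': a finite set $s$ of subsentences of $\psi_1\wedge\psi_2$ may interleave subsentences of $\psi_1$ and of $\psi_2$ through shared fresh constants, and such an $s$ can be $T$-refuted even though each of its restrictions is $T$-good — so it is refuted by $(\psi_1\wedge\psi_2)^*$ but by no sentence built from $\psi_1$ and $\psi_2$ separately, and no splitting trick removes it. When $|T|\ge\theta$ the fixed $D$ no longer fits in $\mathrm{L}_{\theta\gamma}$, so the saturation must be spread over the $\lambda$ axioms. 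I expect to handle this by approximating $T$ from below: for $\lambda=\theta$, by an increasing continuous chain $T=\bigcup_{\alpha<\theta}T_\alpha$ with $|T_\alpha|<\theta$, taking as axioms the sentences $(\bigwedge T_\alpha)\wedge D_\alpha$, where $D_\alpha$ saturates $\bigwedge T_\alpha$ via an $\omega$-iterated killer closure — crucially, $\mathrm{Sub}(\bigwedge T_\alpha)$ already contains the interleaved configurations across the conjuncts of $\bigwedge T_\alpha$, so the entanglement is absorbed, each axiom becomes self-saturated, and the chain structure makes finite conjunctions trivial — and for $\lambda>\theta$ by a suitably directed family, the recurring tool being the completeness-driven splitting principle ``$T\vdash\neg(A\wedge B)$ with $T$ deciding $A$ and $B$ forces $T\vdash\neg A$ or $T\vdash\neg B$,'' iterated to show that every $T$-refuted finite configuration over the pooled subsentences decomposes into pieces already refuted by axioms actually present in $T^*$. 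Reconciling the two competing demands — remaining inside $\mathrm{L}_{\theta\gamma}$ and of size exactly $\lambda$, versus having enough saturation uniformly present — is the delicate point; logical equivalence with $T$, the padding to cardinality $\lambda$, and the furthermore clause are then bookkeeping.
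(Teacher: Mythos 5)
Your construction is only carried out when $|T|<\theta$, and that restriction is not in the theorem, so there is a genuine gap; moreover the paper's proof shows the case split is unnecessary. Where you collect, into one global conjunct $D$, universal ``killers'' $\forall\vec x\,\neg\bigwedge s(\vec x)$ for all $T$-refuted finite patterns over a closed pool $\mathcal S$ built from \emph{all} of $T$ (which is exactly why $D$ outgrows $\mathrm{L}_{\theta\gamma}$ once $\lambda\ge\theta$), the paper saturates each axiom separately: $\phi$ is replaced by $\phi^*=\phi\wedge\bigwedge\{\neg\theta_i:\neg\theta_i\in T\}\wedge\bigwedge\{\theta_i:\theta_i\in T\}$, with $\theta_i$ ranging over the subsentences of $\phi$ alone --- a set of size $<\theta$ by Remark \ref{rem:boundonsubsent} regardless of $|T|$ --- and $T^*$ is the closure of $\{\phi^*:\phi\in T\}$ under finite conjunctions. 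The ``splitting principle'' you invoke is then applied at the level of \emph{single} subsentences rather than finite sets: if a larger starred conjunction refutes a finite set $t$ of subsentences of a smaller one, then $T\vdash\bigvee_{\eta\in t}\neg\eta$, and since $T$ decides each $\eta$ by $(\kappa,\gamma)$-syntactic completeness, some $\neg\eta$ lies in $T$ and is therefore already a conjunct of one of the $\phi_j^*$ occurring in the smaller conjunction, which consequently refutes $t$ itself. This per-axiom saturation needs no global object, is uniform in $\lambda$, and makes the Furthermore clause immediate ($\phi^*\in T$ when $T$ is deductively closed and $\theta=\kappa$).

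The deferred case $\lambda\ge\theta$ is not a marginal one (for deductively closed syntactically complete theories $\lambda$ is typically at least $\theta$; the paper's application to Tarski consistent theories already has $\lambda=\kappa=\theta$), and your fallback plan does not close it. The chain device is intrinsically limited to $\lambda\le\theta$: the union of a $\subseteq$-increasing family of sets each of size $<\theta$, linearly ordered by inclusion, has size at most $\theta$, so no chain of small fragments can cover $T$ when $\lambda>\theta$. Passing to a ``suitably directed family'' does not repair this, because Definition \ref{def:finconservativity} forces $T^*$ to contain the \emph{literal} finite conjunctions $e_{u_1}\wedge\dots\wedge e_{u_k}$ of incomparable axioms and to verify conservativity of larger conjunctions over them; this is exactly where the cross-conjunct entanglement you yourself identified reappears: a $T$-refuted finite $s$ interleaving subsentences of $e_{u_1}$ and $e_{u_2}$ through shared fresh constants need not be refuted by $D_{u_1}\wedge D_{u_2}$ but only by $D_{u_1\cup u_2}$, which is not available in the smaller conjunction, so that conjunction can be consistent with $s$ while a larger one is not. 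Thus, as written, your argument establishes the theorem only for $\lambda<\theta$ (plausibly for $\lambda=\theta$ once the chain sketch is completed), and the proposed method for the remaining range would fail; replacing finite-pattern killers by the $T$-decided truth values of each axiom's own subsentences, as the paper does, is the missing idea that removes the case distinction altogether.
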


The reader not interested in the cardinal bounds on the language can ignore the parts of the proof related to these computations.
\begin{proof}
	\emph{}
	\begin{enumerate}
		\item For each $\phi \in T$, let $\{\theta_i:i\in I_\phi\}$ be the family of subsentences of $\phi$ and consider $\phi^*$ to be
	
		\begin{gather*}
			\phi \ \wedge \ \bigwedge \{ \neg \theta_i: i \in I_\phi
			\text{ and } \neg \theta_i\in T\} \ \wedge \ \bigwedge 
			\{\theta_i: i \in I_\phi
			\text{ and } \theta_i\in T\}. 
		\end{gather*}

		\noindent Then $\phi^*$ is in $\mathrm{L}_{\theta\gamma}$ as there are less than $\theta$-many subsentences of $\phi$ (by Remark \ref{rem:boundonsubsent}, as $((\delta+\gamma)^{<\gamma})^++\kappa\leq\theta$). If $\theta=\kappa$, $\phi^*$ is still an $\mathrm{L}_{\kappa\gamma}$-sentence.
		
		Let us argue that $T^*$, the closure under finite conjunctions of 
		\(
			\{ \phi^* : \phi \in T \},
		\)
		\noindent is the theory we are searching for. 
		\item First, we check the equivalence between $T$ and $T^*$.
		
		\begin{itemize}
			\item For every $\phi \in T$, we have that $T$ proves all the conjuncts of $\phi^*$ by its very definition. Hence, $T$ proves $T^*$. Furthermore, if $T^*$ is an $\mathrm{L}_{\kappa\gamma}$-theory, it is a subset of $T$ as $T$ is syntactically complete for $\mathrm{L}_{\kappa\gamma}$.
			\item At the same time, $T^*$ proves every $\phi \in T$, hence they are equivalent. 
		\end{itemize} 

		\item We now prove the first two conditions for being finitely conservative.
		
		\begin{itemize}
			\item First, since $T$ is coherent and $T^*$ is logically equivalent to it, then $T^*$ is coherent and there is at least one sentence from $T^*$ that is consistent.
			\item Also, $T^*$ is closed under finite conjunctions by its very definition.
		\end{itemize}

		\item Now we establish the third crucial property of finite conservativity. That is, if 
		
		\[
			\{ \bigwedge \{\phi_k^* : k \in K_1\}, \ldots,\bigwedge \{\phi_k^* : k \in K_n\} \}
		\]

		\noindent is a finite subset of $T^*$, and $\bigwedge \{\phi_k^* : k \in K_i\}$ is one of its elements, and a finite set $t$ of subsentences of 
		
		\[
			\bigwedge \{\phi_k^* : k \in K_i\}
		\]

		\noindent is consistent with $\bigwedge \{\phi_k^* : k \in K_i\}$, then $t$ is also consistent with 
		
		\[
			\bigwedge \bigg( \bigwedge \{\phi_k^* : k \in K_1\} \wedge \ldots \wedge \bigwedge \{\phi_k^* : k \in K_n\} \bigg).
		\]

		\noindent Consider an enumeration 

		\[
			\bp{\phi^*_1,\dots,\phi^*_m} = \bigcup_{l=1}^n\bp{\phi^*_j: j \in K_l}
		\]

		\noindent such that 
		
		\[
			\bp{\phi^*_1,\dots,\phi^*_k}=\bp{\phi^*_j: j \in K_i}.
		\]

		\item First, notice that if a sentence in $t$ is one from the first three levels (blue) of the decomposition tree of $\phi_1^*\wedge\dots\wedge\phi_k^*$ below, then it is automatically a logical consequence of $\bp{\phi^*_1,\dots,\phi^*_k}$ and of $\bp{\phi^*_1,\dots,\phi^*_m}$, as the only connective labelling a blue node is the conjunction. Thus, we can assume $t$ only contains subsentences of $\phi_1^* \wedge \dots \wedge \phi_k^*$ from the fourth level or below (green). 
			
			\begin{figure}[H]
				\centering
				\begin{tikzpicture}
					[
						level distance=2cm,
						level 1/.style={sibling distance=3cm},
						level 2/.style={sibling distance=5cm},
						level 3/.style={sibling distance=3cm}, 
						every node/.style={text width=5cm, align=center}
					]
					\node[text=MidnightBlue] {$\phi_1^* \wedge \ldots \wedge \phi_{k}^*$}
						child[text=MidnightBlue] {node {$\phi_1^*$}}
						child {node[text=MidnightBlue] {$\ldots$} edge from parent[draw=none]}
						child {
							node[text=MidnightBlue] {$\phi_j^*$}
							child {node[text=MidnightBlue] {$\phi_j$}}
							child {
								node[text=MidnightBlue] {$\bigwedge \{\neg \theta_i: i\in I_{\phi_j},\,\neg \theta_i  \in T  \}\wedge\bigwedge \{\theta_i: i\in I_{\phi_j},\, \theta_i  \in T  \}$}
								child{ node[text=OliveGreen]{$\neg \theta_i \text{ s.t. } i\in I_{\phi_j} \text{ and } \neg\theta_i \in  T $} }
child{ node[text=OliveGreen]{$\ldots$}}
child{ node[text=OliveGreen]{$\theta_i \text{ s.t. }  i\in I_{\phi_j} \text{ and }  \theta_i \in T $} }
								child{ node[text=OliveGreen]{$\ldots$}}
							}
						}
						child {node {$\ldots$} edge from parent[draw=none]}
						child {node[text=MidnightBlue] {$\phi_{k}^*$}};
				\end{tikzpicture}
				\end{figure} 
				
	\item Now, every subsentence $\psi$ of $\phi_1^* \wedge \dots \wedge \phi^*_k$ labelled by a green node is:
		
		\begin{itemize} 
			\item either a subsentence $\theta_i$ of $\phi_j$ for some $j=1,\dots,k$ and $i\in I_{\phi_j}$, 
			\item or the negation of a subsentence $\theta_i$ of $\phi_j$ 
for some $j=1,\dots,k$ and $i\in I_{\phi_j}$.			
		\end{itemize}
	
	Furthermore, even if $\mathrm{L}$ has size greater or equal to $\kappa$, we have that $\neg \theta_i, \theta_i$ are $\mathrm{L}_{\kappa \gamma}$-sentences, since each $\theta_i$ is a subsentence of an $\mathrm{L}_{\kappa\gamma}$-sentence. Hence, since $T$ is $(\kappa,\gamma)$-syntactically complete, $T$ proves $\theta_i$ or $\neg \theta_i$.
	
	\item We have all the tools to finish the proof. Assume by contradiction that there are $\phi_1,\dots,\phi_m$ in $T$, $0<k<m$ and $s=\bp{\eta_1,\dots,\eta_l}$ a finite set of subsentences of  $\phi_1^*\wedge\dots\wedge\phi_k^*$ (for some $l\in\mathbb{N}$) whose conjunction is consistent with $\phi_1^*\wedge\dots\wedge\phi_k^*$, but not with 
$\phi_1^*\wedge\dots\wedge\phi_m^*$. 
Then 
		
		\[
			T \vdash \bigwedge_{i=1}^m \phi_i^* \vdash \bigvee_{i=1}^l \neg \eta_i.
		\]

		\noindent Since $T$ is complete for $\mathrm{L}_{\kappa \gamma}$, we must have that at least one of the $\neg \eta_i$ is in $T$. But then for some $j$  and $l \in I_{\phi_j}$ we have that $\eta_i=\theta_l$ is a subsentence of $\phi_j$ or $\eta_i=\neg \theta_l$ with $\theta_l$ a subsentence of $\phi_j$. In either cases we have that $\neg \eta_i$ (being in $T$) is (logically equivalent to) one of the conjuncts of $\phi^*_j$ for some $j=1,\dots,k$.

	This contradicts that $\bp{\eta_1,\dots,\eta_l}\cup\bp{\phi_1^*,\dots,\phi^*_k}$ is Boolean consistent.

	We reached a contradiction.
	\end{enumerate}
\end{proof}

\subsection{Tarski consistent theories versus finitely conservative ones}

Tarski consistency implies Boolean consistency, since the set of truth values $\{0,1\}$ can be seen as a Boolean algebra. We show that Tarski consistency is also stronger than finite conservativity on a level by level stratification of $\mathrm{L}_{\infty\lambda}$ as the union of its fragments $\mathrm{L}_{\kappa\lambda}$. 

\begin{proposition}
	Let $T$ be an $\mathrm{L}_{\kappa\lambda}$-theory in a language of size less than $\kappa=\kappa^{<\kappa}\geq\lambda$ which is Tarski consistent as witnessed by a model of size less than $\kappa$. Then there exists a $(\kappa,\lambda)$-syntactically complete $\mathrm{L}_{\kappa\lambda}$-theory $T^*$ stronger than $T$ and finitely conservative.
\end{proposition}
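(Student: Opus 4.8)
The plan is to reduce the statement to Theorem \ref{thm:comthe} by extending $T$ to a suitable syntactically complete theory using the Boolean completeness theorem. First I would build, by a transfinite recursion of length $\kappa$, an increasing chain of coherent $\mathrm{L}_{\kappa\lambda}$-theories $T=T_0\subseteq T_\alpha\subseteq\cdots$ extending $T$ and deciding, at stage $\alpha$, the $\alpha$-th $\mathrm{L}_{\kappa\lambda}$-sentence $\phi_\alpha$ in a fixed enumeration of all $\mathrm{L}_{\kappa\lambda}$-sentences (there are at most $\kappa^{<\kappa}=\kappa$ of them, by the cardinal arithmetic hypothesis and the bound on the language size, so the enumeration has length $\kappa$). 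At a successor stage one puts $T_{\alpha+1}=T_\alpha\cup\{\phi_\alpha\}$ if this is still coherent, and $T_{\alpha+1}=T_\alpha\cup\{\neg\phi_\alpha\}$ otherwise; coherence is preserved since if both $T_\alpha\cup\{\phi_\alpha\}$ and $T_\alpha\cup\{\neg\phi_\alpha\}$ were incoherent then $T_\alpha$ would prove both $\neg\phi_\alpha$ and $\phi_\alpha$ (using the Cut rule on $\phi_\alpha\vee\neg\phi_\alpha$), contradicting the coherence of $T_\alpha$. At limit stages one takes unions; coherence passes to the union because any proof is a well-founded tree using only set-many premises, which (since each individual sentence is $\mathrm{L}_{\kappa\lambda}$ and $\kappa$ is regular) must already appear in some $T_\alpha$. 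Setting $T^*=\bigcup_{\alpha<\kappa}T_\alpha$ yields a coherent, $(\kappa,\lambda)$-syntactically complete $\mathrm{L}_{\kappa\lambda}$-theory extending $T$, hence logically stronger than $T$.

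Next I would apply Theorem \ref{thm:comthe} to $T^*$. Here $\gamma=\lambda$, the size of $T^*$ is at most $\kappa$, the language has some size $\delta<\kappa$, and the hypothesis $\kappa=\kappa^{<\kappa}\geq\lambda$ together with $\delta<\kappa$ gives $(\delta+\lambda)^{<\lambda}\leq\kappa^{<\kappa}=\kappa$, so in fact $(\delta+\gamma)^{<\gamma}<\kappa^+$, and more to the point $\theta=\max\{((\delta+\gamma)^{<\gamma})^+,\kappa\}$ — but since $\kappa$ is regular and uncountable (being $\kappa^{<\kappa}$ and $\geq\lambda\geq\omega$, hence certainly $\kappa>\omega$ unless $\kappa=\omega$, which the case $\kappa=\omega=\kappa^{<\kappa}$ one handles separately or absorbs since then $(\delta+\lambda)^{<\lambda}$ with $\delta,\lambda<\omega$ is finite) we are in the "furthermore" clause: $\kappa>(\delta+\gamma)^{<\gamma}$ is regular, so Theorem \ref{thm:comthe} produces an $\mathrm{L}_{\kappa\lambda}$-theory $T^{**}\subseteq T^*$ which is logically equivalent to $T^*$ and finitely conservative. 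Renaming $T^{**}$ as the sought theory, it is an $\mathrm{L}_{\kappa\lambda}$-theory, logically equivalent to $T^*$ (hence stronger than $T$), $(\kappa,\lambda)$-syntactically complete (being logically equivalent to the syntactically complete $T^*$), and finitely conservative.

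I should double-check that Boolean consistency is not needed as an input: $T$ is Tarski consistent, hence Boolean consistent, hence $T^*$ (being an extension only by sentences preserving coherence, and coherent theories are Boolean consistent by the completeness theorem \ref{them:boolcompl}) is Boolean consistent, so the "at least one sentence is Boolean consistent" clause in finite conservativity is automatic. The main obstacle I anticipate is bookkeeping around the cardinal arithmetic: verifying that we genuinely land in the regular-uncountable "furthermore" clause of Theorem \ref{thm:comthe} so that $T^{**}$ can be taken inside $\mathrm{L}_{\kappa\lambda}$ (rather than merely $\mathrm{L}_{\theta\lambda}$ for a possibly larger $\theta$), and dealing cleanly with the degenerate case $\kappa=\omega$ — though in that case $\lambda=\omega$ too and the claim is essentially the classical fact that a consistent first order theory extends to a complete one, which is finitely conservative by the results of this section. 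A secondary subtlety is confirming that the recursion's coherence-preservation argument is valid for the infinitary sequent calculus of Section \ref{subsec:proofsystem}, which follows from the well-foundedness of proof trees and the regularity of $\kappa$ as indicated above.
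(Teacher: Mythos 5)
Your second paragraph (the application of Theorem \ref{thm:comthe}) is essentially the paper's argument, but your first paragraph takes a genuinely different route that contains a fatal gap. The paper does not build the syntactically complete extension by a Lindenbaum recursion: it uses the hypothesis you never touch, namely that $T$ has a Tarski model $\mathcal{M}$ of size less than $\kappa$. One adds the elements of $M$ as fresh constants (keeping the language of size $<\kappa$) and takes $T'$ to be the full $(\mathrm{L}\cup M)_{\kappa\lambda}$-theory of the expanded structure; this is coherent and $(\kappa,\lambda)$-syntactically complete \emph{by fiat}, since every sentence is either true or false in $\mathcal{M}$, and then Theorem \ref{thm:comthe} finishes the proof. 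In your argument the model hypothesis is demoted to a way of inferring Boolean consistency, which should have been a warning sign.

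The gap is at the limit stages of your recursion. The claim that a proof of a contradiction from the union is a well-founded tree whose premises "must already appear in some $T_\alpha$" is false: proof trees in the infinitary calculus carry no bound on the number of theory premises they use, so the set of premises occurring in a proof can be cofinal in the chain; regularity of $\kappa$ bounds the size of individual sentences, not the number of sentences a proof consumes. In fact the union of an increasing chain of coherent infinitary theories need not be coherent — this is precisely the failure of compactness. Concretely, run your recursion from $T_0=\{\bigvee_{n<\omega}c_\omega=c_n\}$ (Tarski consistent with a countable model, so within the hypotheses of the proposition for, say, $\kappa=\omega_1=\omega_1^{<\omega_1}$, $\lambda=\omega$) with an enumeration listing the sentences $c_\omega\neq c_n$ first: at each successor stage $T_n\cup\{c_\omega\neq c_n\}$ is coherent (it even has a Tarski model), so your rule adds $c_\omega\neq c_n$, and already $T_\omega$ contains the theory of Example \ref{faicom}, which derives a contradiction by Theorem \ref{them:boolcompl}. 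This is exactly why the paper remarks that a Boolean consistent theory need not extend to a syntactically complete one; the only reason completion is available here is the presence of an actual Tarski model whose theory one can take.
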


\begin{proof}
	Denote by $\mathcal{M}$ a Tarski model of $T$ with domain $M$ of size less than $\kappa$, and by $T'$ the $(\mathrm{L} \cup M)_{\kappa\lambda}$-theory of $\mathcal{M}$, where $M$ is added as a fresh set of constants to $\mathrm{L}$ and each $m\in M$ is intepreted by itself in the natural expansion of $\mathcal{M}$ to an $\mathrm{L}\cup\bp{M}$-structure. $T'$ is a coherent and $(\kappa,\lambda)$-syntactically complete theory. Hence, Theorem \ref{thm:comthe} (and the cardinal assumptions on $\kappa,\lambda$, and $\mathrm{L}$) ensure the existence of an $\mathrm{L}_{\kappa\lambda}$-theory $T^*$, logically equivalent to $T'$ and finitely conservative. $T^*$ is stronger than $T$ and finitely conservative.	
\end{proof}

\subsection{Boolean Compactness for $\Linff$ generalizes first order compactness}

We can now show that the Boolean Compactness Theorem \ref{conservative} is truly a generalization to $\Linff$ of the compactness theorem for first order logic.

\begin{corollary} \label{thm:concompfirord}
	Let $T$ be a finitely Tarski consistent first order $\mathrm{L}$-theory. Then there is a finitely conservative $\Linf$-theory $T^*$ which is stronger than $T$; hence $T$ is Tarski consistent.
\end{corollary}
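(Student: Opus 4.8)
The plan is to obtain $T^*$ in two moves that both invoke machinery already at hand: first enlarge $T$ to a syntactically complete first order theory, then hand that theory to Theorem~\ref{thm:comthe}, which converts syntactic completeness into finite conservativity. It is essential that the first move use only Lindenbaum's lemma — which needs nothing beyond syntactic consistency of $T$ — so that the argument does not quietly reuse first order compactness, which is exactly what the ``hence'' clause is meant to reprove. Concretely, I would first observe that $T$ is \emph{coherent}: every finite $s\subseteq T$ has a Tarski model, hence by soundness of the proof system of Section~\ref{subsec:proofsystem} is coherent, and since proofs are finite, $T$ is coherent. By Lindenbaum's lemma (Zorn's lemma on the poset of coherent first order $\mathrm{L}$-theories extending $T$, ordered by inclusion, whose chains have coherent unions because proofs are finite) fix a maximal such $\widetilde{T}\supseteq T$; maximality forces $\widetilde{T}$ to be $(\omega,\omega)$-syntactically complete, since for each first order $\mathrm{L}$-sentence $\phi$ at least one of $\widetilde{T}\cup\{\phi\}$, $\widetilde{T}\cup\{\neg\phi\}$ is coherent and hence, by maximality, already lies in $\widetilde{T}$. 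Thus $\widetilde{T}$ is a coherent, $(\omega,\omega)$-syntactically complete $\mathrm{L}_{\omega\omega}$-theory.

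Next I would apply Theorem~\ref{thm:comthe} to $\widetilde{T}$ with $\kappa=\gamma=\omega$ and $\delta$ the number of constant symbols of $\mathrm{L}$: since $(\delta+\omega)^{<\omega}=\delta+\omega$, the theorem produces a theory $T^*$ which is finitely conservative, logically equivalent to $\widetilde{T}$, and lies in $\mathrm{L}_{(\delta+\omega)^+\,\omega}$, hence is an $\Linf$-theory. Because $T\subseteq\widetilde{T}$ and $T^*$ is logically equivalent to $\widetilde{T}$, we get $T^*\vdash\phi$ for every $\phi\in T$, i.e.\ $T^*$ is stronger than $T$; this is the first assertion of the corollary.

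For the ``hence $T$ is Tarski consistent'' clause: $T^*$ is a finitely conservative family of $\Linf$-sentences, so the Boolean Compactness Theorem~\ref{conservative} — in the refined form that produces a \emph{mixing} model for $\Linf$-theories (Theorem~\ref{thm:boolcomp0}, obtained through the $\Linf$ clause of the Model Existence Theorem~\ref{ModExiThe}) — yields a complete Boolean algebra $\mathsf{B}$ and a mixing $\mathsf{B}$-valued model $\mathcal{M}$ with $\Qp{\bigwedge T^*}^{\mathcal{M}}_{\mathsf{B}}=1_{\mathsf{B}}$. Since $T^*$ proves every $\phi\in T$, correctness for Boolean valued semantics (Theorem~\ref{them:boolcompl}) gives $\Qp{\phi}^{\mathcal{M}}_{\mathsf{B}}=1_{\mathsf{B}}$, so $\mathcal{M}$ is a mixing $\mathsf{B}$-valued model of the \emph{first order} theory $T$; by Observation~\ref{observation2}, $\mathcal{M}/_F$ is then a Tarski model of $T$ for any ultrafilter $F$ on $\mathsf{B}$, whence $T$ is Tarski consistent.

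I do not expect a genuine obstacle: the weight is borne by Theorem~\ref{thm:comthe} (producing the finitely conservative equivalent of a syntactically complete theory) and by Observation~\ref{observation2} (extracting a Tarski model from a mixing one). The only points needing care are disciplinary — performing the completion of $T$ via Lindenbaum's lemma rather than via compactness, and tracking the cardinal parameter $\theta=(\delta+\omega)^+$ supplied by Theorem~\ref{thm:comthe} to confirm that $T^*$ indeed remains inside $\mathrm{L}_{\infty\omega}=\Linf$.
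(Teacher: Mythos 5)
Your proof follows the paper's own argument essentially step for step: a Lindenbaum completion of $T$ to a coherent, $(\omega,\omega)$-syntactically complete theory, then Theorem~\ref{thm:comthe} to convert it into a logically equivalent finitely conservative $\mathrm{L}_{\infty\omega}$-theory, then the Boolean Compactness Theorem~\ref{conservative} plus extraction of a Tarski model from the resulting Boolean valued model (the paper cites the equivalence of Tarski and Boolean consistency for first order theories from the companion paper, which is the same mechanism as your mixing-model-plus-Observation~\ref{observation2} route). The one step the paper performs that you omit is adding a countable set of fresh constants to $\mathrm{L}$ \emph{before} forming the Lindenbaum completion: this Henkin-style move is what ensures the completed theory also decides the subsentences of its members (which, per Notation~\ref{not:subformulaeconprop}, are formed by substituting fresh constants), and that is precisely what the proof of Theorem~\ref{thm:comthe} leans on when it asserts that the syntactically complete theory proves $\theta_i$ or $\neg\theta_i$ for each subsentence. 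As a black-box application of Theorem~\ref{thm:comthe} as literally stated your argument goes through, but you should insert that step so that the hypotheses the theorem's proof actually uses are met.
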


\begin{proof}
	Expand $T$ to an $(\omega,\omega)$-syntactically complete and coherent first order theory with a countable set of constants which we denote $T_0$. For this, first add a countable set of constants to the language and note that the theory $T$ is still finitely consistent for the expanded language; then we apply the correctness theorem to get that $T$ is coherent relative to the proof system of Section \ref{subsec:proofsystem} and thus can be extended to a maximal coherent first order (i.e. $\mathrm{L}_{\omega\omega}$) theory $T_0$ for that proof system (by a standard argument rooted in the axiom of choice or its variants). By Theorem \ref{thm:comthe} (applied to the coherent and $(\omega,\omega)$-syntactically complete theory $T_0$) there exist an $\mathrm{L}_{\infty\omega}$-theory $T_1$ (which is not necessarily first order) finitely conservative and logically equivalent to $T_0$. $T_1$, and hence $T_0$, admits a Boolean valued model with the mixing property, by the Boolean Compactness Theorem \ref{conservative}. Since $T_0$ is a first order theory, the equivalence between Tarski consistency and Boolean  consistency for first order logic (see \cite[Fact 9.4]{MatteoJuanBoolean}) produces a Tarski model of $T_0$, which in particular is a model of $T$.
\end{proof}

\subsection{Open questions} \label{sec:opeque}

We have shown that if $T$ is an $\Linff$-theory, then the following sequence of implications holds:

\begin{center}
	$T$ is Tarski consistent \\
	$\Downarrow$ \\
	there exists $T^*$ stronger than $T$ and finitely conservative \\
	$\Updownarrow$ \\
	$T$ is Boolean consistent.
\end{center}

\begin{question}
Assume $\kappa,\lambda \geq \omega$ are regular cardinals, and $T$ is an $\mathrm{L}_{\kappa\lambda}$-theory.
	Can the implication 
	
	\begin{center}
		there exists $T^*$ stronger than $T$ and finitely conservative \\
		$\Uparrow$ \\
		$T$ is Boolean consistent
	\end{center}
	
	\noindent be proved by providing $T^*$ which is also an $\mathrm{L}_{\kappa\lambda}$-theory?

\end{question}
Thm. \ref{thm:comthe} gives a positive answer only when $T$ is $(\kappa,\lambda)$-syntactically complete for some $\kappa=\kappa^{<\kappa}\geq\lambda$ of size greater than $\mathrm{L}$.
Otherwise, we only have the trivial solution given by $T^*=\bp{\bigwedge T}$ which is not an $\mathrm{L}_{\kappa\lambda}$-theory when $T$ has size at least $\kappa$.

	There is also a natural connection between Boolean compactness and Barwise compactness.
Recall that a set is admissible if it is transitive and it is a model of Kripke-Platek set theory\footnote{Kripke Platek set theory is obtained by the usual axioms of $\bool{ZF}$ removing powerset, choice, changing replacement with the Collection Principle for $\Sigma_1$-definable relations, and restricting the separation axiom to hold just for $\Delta_0$-formulae.}.

\begin{definition}
	Let $T$ be a countable $\mathcal{L}_{\omega_1\omega}$-theory. $T$ is of \textbf{Barwise-type} if there exists an admissible set $\mathcal{A}$ such that:
	
	\begin{itemize}
		\item $T$ is a $\Sigma_1$-definable class in $\mathcal{A}$, and
		\item every $t \subseteq T$ such that $t \in \mathcal{A}$ has a Tarski model. 
	\end{itemize}
\end{definition}

\begin{theorem}[Barwise] \label{thm:Barwise}
	Let $T$ be an $\mathcal{L}_{\omega_1\omega}$-theory of Barwise-type. Then $T$ is Tarski consistent.
\end{theorem}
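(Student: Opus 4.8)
The plan is to reduce Theorem \ref{thm:Barwise} to the results of Section \ref{sec:compactness}, isolating the single point where admissibility of $\mathcal{A}$ is really needed. Since $T$ is a \emph{countable} $\mathcal{L}_{\omega_1\omega}$-theory it lies inside $\Linf$, and $\bigwedge T$ is itself an $\mathcal{L}_{\omega_1\omega}$-sentence; hence $\bp{\bigwedge T}$ is finitely conservative exactly when $T$ is Boolean consistent (the trivial observation made right after Definition \ref{def:finitecons0}), and this is precisely the bridge to the Boolean Compactness Theorem \ref{conservative}. If $T$ is Boolean consistent, then, being a Boolean consistent $\Linf$-theory, it has a mixing model by the Boolean Completeness Theorem \ref{them:boolcompl} (equivalently by the Model Existence Theorem \ref{ModExiThe}); and then Makkai's model existence theorem (Observation \ref{observation1}), which applies because $T$ is countable, upgrades this to a Tarski model. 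So it suffices to prove that $T$ is Boolean consistent, equivalently --- again by Theorem \ref{them:boolcompl} --- that $T$ is coherent for the sequent calculus of Section \ref{subsec:proofsystem}.

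To prove $T$ coherent, assume toward a contradiction that $T$ derives the empty sequent. The Barwise-type hypothesis now enters through a reflection argument. Recall that ``$P$ is a proof of $\Gamma\vdash\Delta$'' is a $\Delta_1$ property (Section \ref{sec:boolvalsemvstarskisem}), that $T$ is $\Sigma_1$-definable over the admissible set $\mathcal{A}$, and that $\mathcal{A}\models\KP$. Running the consistency-property construction underlying the Model Existence Theorem \ref{ModExiThe} as a $\Sigma_1$-recursion along $\Ord\cap\mathcal{A}$ --- enumerating the axioms of $T$ via its $\Sigma_1$-definition and treating one infinitary disjunction, one existential quantifier, or one new axiom of $T$ at each stage --- one maintains at every stage an $\mathcal{A}$-finite approximating set of sentences; were $T$ coherent, the recursion would run through all of $\Ord\cap\mathcal{A}$ and assemble a (mixing, Boolean) model of $T$. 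Incoherence therefore forces the recursion to break at some $\mathcal{A}$-finite stage, which pins down a set $t\in\mathcal{A}$ with $t\subseteq T$ that is itself incoherent. By the correctness theorem for Tarski semantics (Section \ref{sec:boolvalsemvstarskisem}) such a $t$ has no Tarski model, contradicting the defining property of a theory of Barwise-type. Hence $T$ is coherent, and the reduction of the first paragraph completes the proof.

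The genuine obstacle is exactly the reflection step just sketched: one must pass from an \emph{arbitrary} $\Linff$-derivation of a contradiction from $T$ --- which, because of the Cut rule, may mention formulas lying far outside $\mathcal{A}$ --- to one whose data live in $\mathcal{A}$. The proof-theoretic way around this is to establish a cut-elimination (or at least a subformula-property) theorem for the calculus of Section \ref{subsec:proofsystem} restricted to $\mathcal{L}_{\omega_1\omega}$, so that a derivation of the empty sequent from $T\subseteq\mathcal{L}_{\omega_1\omega}\cap\mathcal{A}$ can be taken to mention only formulas in $\mathcal{A}$; then the $\Delta_1$-ness of ``$P$ is a proof'' together with $\Sigma_1$-collection in $\mathcal{A}$ exhibits such a proof as an element of $\mathcal{A}$, whose $\mathcal{A}$-finitely many axioms from $T$ form the required $t$. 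This is in essence Barwise's original argument and is where the real work lies. The model-theoretic way, which would fit the present paper better, is to re-examine the Model Existence Theorem \ref{ModExiThe} so as to run its consistency-property construction as a $\Sigma_1$-recursion inside $\mathcal{A}$; the delicate point there is the same reflection in disguise --- one must arrange that the $\mathcal{A}$-finitely many consistency demands arising along the construction are all met by the hypothesized Tarski models of $\mathcal{A}$-finite subtheories of $T$, which is precisely what makes the recursion close off. In either presentation, the only ingredient genuinely beyond the elementary machinery of this paper is this admissibility/$\Sigma_1$-recursion input grafted onto the consistency-property technology.
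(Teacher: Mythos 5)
First, a point of comparison: the paper does not prove this statement at all --- it is quoted as Barwise's classical theorem and used as a black box in Section \ref{sec:opeque} --- so there is no internal proof to measure yours against. Your first paragraph is a correct and well-aligned reduction in the spirit of the paper: by the completeness theorem \ref{them:boolcompl} and Makkai's model existence theorem (Observation \ref{observation1}, which applies because $T$ is countable), it suffices to show that $T$ is coherent for the calculus of Section \ref{subsec:proofsystem}.

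The gap is that the remaining step --- passing from ``$T$ is incoherent'' to ``some $t\subseteq T$ with $t\in\mathcal{A}$ is incoherent'' --- is asserted rather than proved, and this step is the entire mathematical content of the theorem. You flag it yourself, but neither of the two routes you sketch is actually carried out, and neither is routine. The claim that ``incoherence forces the recursion to break at some $\mathcal{A}$-finite stage'' does not follow from anything established: to make the model-theoretic route work one must exhibit a concrete consistency property (say, the $\mathcal{A}$-finite sets $s$ such that $s\cup t$ is Tarski consistent for every $t\subseteq T$ with $t\in\mathcal{A}$), verify clauses (Ind.4) and (Ind.5) of Definition \ref{def:ConProInf} and closure under adjoining further axioms of $T$ --- each of which uses $\Sigma$-reflection/collection in $\mathcal{A}$ in an essential way --- and then check that the model produced by Theorem \ref{ModExiThe} satisfies all of $T$ rather than only its $\mathcal{A}$-finite fragments. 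On the proof-theoretic route, the Cut rule is a genuine obstruction exactly as you say: a derivation of the empty sequent from $T$ may pass through formulae far outside $\mathcal{A}$, and the paper explicitly declines to prove cut elimination for its calculus (see the footnote to the definition of syntactic completeness), so the subformula property you would need is not available from anything in the paper. As written, your argument establishes the theorem only modulo Barwise completeness (or an equivalent $\Sigma_1$-reflection principle for proofs), which is essentially the statement to be proved; the honest conclusion is that the reduction in your first paragraph is sound, but the theorem still has to be imported from the literature.
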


Hence, if $T$ is of Barwise-type, it is Tarski consistent and there exists an $\mathrm{L}_{\omega_1\omega}$-theory $T^*$ stronger than $T$ and finitely conservative.

\begin{question}
	Given a Barwise-type theory $T$, is there a proof of the existence of an $\mathrm{L}_{\omega_1\omega}$-theory $T^*$ which is finitely conservative and stronger than $T$ that does not rely on Barwise's theorem and the observation that Tarski complete $\mathrm{L}_{\omega_1\omega}$-theories are logically equivalent to a finitely conservative $\mathrm{L}_{\omega_1\omega}$-theory?
\end{question}

\bibliographystyle{plain}
	\bibliography{Biblio}

@incollection {PIEVIA-SHBOOVALMOD25,
    AUTHOR = {Pierobon, Moreno and Viale, Matteo},
     TITLE = {Boolean valued models, sheafifications, and boolean
              ultrapowers of {T}ychonoff spaces},
 BOOKTITLE = {The mathematical and philosophical legacy of {A}lexander
              {G}rothendieck},
    SERIES = {Chapman Math. Notes},
     PAGES = {355--390},
 PUBLISHER = {Birkh\"auser/Springer, Cham},
      YEAR = {[2025] \copyright 2025},
      ISBN = {978-3-031-68933-8; 978-3-031-68934-5},
   MRCLASS = {03C90 (18F10 18F20 18F60 18F70)},
  MRNUMBER = {4866878},
       DOI = {10.1007/978-3-031-68934-5\_14},
       URL = {https://doi.org/10.1007/978-3-031-68934-5_14},
}

@article {SHE-INFLOG2012,
    AUTHOR = {Shelah, Saharon},
     TITLE = {Nice infinitary logics},
   JOURNAL = {J. Amer. Math. Soc.},
  FJOURNAL = {Journal of the American Mathematical Society},
    VOLUME = {25},
      YEAR = {2012},
    NUMBER = {2},
     PAGES = {395--427},
      ISSN = {0894-0347,1088-6834},
   MRCLASS = {03C95 (03C55 03C80)},
  MRNUMBER = {2869022},
MRREVIEWER = {Alfred\ Dolich},
       DOI = {10.1090/S0894-0347-2011-00712-1},
       URL = {https://doi.org/10.1090/S0894-0347-2011-00712-1},
}

@misc{PIEVIA19,
      title={Boolean valued models, presheaves, and \'etal\'e spaces}, 
      author={Moreno Pierobon and Matteo Viale},
      year={2023},
      eprint={2006.14852},
      archivePrefix={arXiv},
      primaryClass={math.LO},
      PAGES = {1--46},
      url={https://arxiv.org/abs/2006.14852}, 
}

@book{JECHST,
	Address = {Ber\-lin},
	Author = {Jech, T.},
	Date-Added = {2013-11-25 10:46:24 +0000},
	Date-Modified = {2013-11-25 10:46:24 +0000},
	Isbn = {3-540-44085-2},
	Mrclass = {03Exx (03-01 03-02)},
	Mrnumber = {1940513},
	Mrreviewer = {Eva Coplakova},
	Note = {The third millennium edition, revised and expanded},
	Pages = {xiv+769},
	Publisher = {Springer},
	Series = {Springer Monographs in Mathematics},
	Title = {Set theory},
	Year = {2003},
	Zmnumber = {1007.03002}}

@book{kanamori:higher_infinite,
	Author = {Kanamori, Akihiro},
	Date-Added = {2015-06-03 12:25:07 +0000},
	Date-Modified = {2015-06-03 12:25:07 +0000},
	Edition = {Second},
	Isbn = {3-540-00384-3},
	Mrclass = {03E55 (01A55 01A60 03-02 03-03)},
	Mrnumber = {1994835 (2004f:03092)},
	Note = {Large cardinals in set theory from their beginnings},
	Pages = {xxii+536},
	Publisher = {Springer-Verlag, Berlin},
	Series = {Springer Monographs in Mathematics},
	Title = {The higher infinite},
	Year = {2003}}

@book{MalitzThesis,
    AUTHOR = {Malitz, Jerome Irving},
     TITLE = {Problems in the model theory of infinite languages},
      NOTE = {Thesis (Ph.D.)--University of California, Berkeley},
 PUBLISHER = {ProQuest LLC, Ann Arbor, MI},
      YEAR = {1966},
     PAGES = {87},
   MRCLASS = {Thesis},
  MRNUMBER = {2615760},
       URL =
              {http://gateway.proquest.com/openurl?url_ver=Z39.88-2004&rft_val_fmt=info:ofi/fmt:kev:mtx:dissertation&res_dat=xri:pqdiss&rft_dat=xri:pqdiss:6615439},
}

@book {KeislerInfLog,
    AUTHOR = {Keisler, H. Jerome},
     TITLE = {Model theory for infinitary logic. {L}ogic with countable
              conjunctions and finite quantifiers},
      NOTE = {Studies in Logic and the Foundations of Mathematics, Vol. 62},
 PUBLISHER = {North-Holland Publishing Co., Amsterdam-London},
      YEAR = {1971},
     PAGES = {x+208 pp. (loose erratum)},
   MRCLASS = {02H10 (02B25 02K15)},
  MRNUMBER = {0344115},
MRREVIEWER = {Andreas Blass},
}

@book{MarkerInfLog,
  title={Lectures on Infinitary Model Theory},
  author={Marker, D. and Association for Symbolic Logic},
  isbn={9781107181939},
  lccn={2016034421},
  series={Lecture Notes in Logic},
  url={https://books.google.es/books?id=78nxDAAAQBAJ},
  year={2016},
  publisher={Cambridge University Press}
}

@article {MansfieldConPro,
    AUTHOR = {Mansfield, Richard},
     TITLE = {The completeness theorem for infinitary logic},
   JOURNAL = {J. Symbolic Logic},
  FJOURNAL = {The Journal of Symbolic Logic},
    VOLUME = {37},
      YEAR = {1972},
     PAGES = {31--34},
      ISSN = {0022-4812},
   MRCLASS = {02B25 (02H13)},
  MRNUMBER = {398773},
MRREVIEWER = {M. E. Nadel},
       DOI = {10.2307/2272542},
       URL = {https://doi.org/10.2307/2272542},
}

@book {Karp,
    AUTHOR = {Karp, Carol R.},
     TITLE = {Languages with expressions of infinite length},
 PUBLISHER = {North--Holland Publishing Co., Amsterdam},
      YEAR = {1964},
     PAGES = {xix+183},
   MRCLASS = {02.35},
  MRNUMBER = {0176910},
MRREVIEWER = {E. Engeler},
}

@book {ModelsGames,
    AUTHOR = {V\"{a}\"{a}n\"{a}nen, Jouko},
     TITLE = {Models and games},
    SERIES = {Cambridge Studies in Advanced Mathematics},
    VOLUME = {132},
 PUBLISHER = {Cambridge University Press, Cambridge},
      YEAR = {2011},
     PAGES = {xii+367},
      ISBN = {978-0-521-51812-3},
   MRCLASS = {03-02 (03C68 03C75 03C80 03E20 91A44)},
  MRNUMBER = {2768176},
MRREVIEWER = {Debora Di Caprio},
       DOI = {10.1017/CBO9780511974885},
       URL = {https://doi.org/10.1017/CBO9780511974885},
}

@article{MatteoJuanBoolean,
  title={Boolean valued semantics for infinitary logics},
  author={Su{\'a}rez, Juan M Santiago and Viale, Matteo},
  journal={Annals of Pure and Applied Logic},
  volume={175},
  number={1},
  pages={103333},
  year={2024},
  publisher={Elsevier}
}

@phdthesis{SantiagoPhd,
	Author = {Santiago Suárez, Juan Manuel},
	School = {Université Paris Cité, Politecnico di Torino},
	Title = {Infinitary logics and forcing},
	Year = {2024}}

@book{VialeForcing,
    AUTHOR = {Viale, Matteo},
     TITLE = {The forcing method in set theory---an introduction via
              {B}oolean valued logic},
    SERIES = {Unitext},
    VOLUME = {168},
      NOTE = {La Matematica per il 3+2},
 PUBLISHER = {Springer, Cham},
      YEAR = {[2024] \copyright 2024},
     PAGES = {xiii+242},
      ISBN = {978-3-031-71659-1; 978-3-031-71660-7},
   MRCLASS = {03-01 (03C90 03E35 03E40)},
  MRNUMBER = {4864214},
       DOI = {10.1007/978-3-031-71660-7},
       URL = {https://doi.org/10.1007/978-3-031-71660-7},
}

@book{MacLMoer,
  title={Sheaves in geometry and logic: A first introduction to topos theory},
  author={MacLane, Saunders and Moerdijk, Ieke},
  year={2012},
  publisher={Springer Science \& Business Media}
}

@article{KeislerKnight,
  title={Barwise: infinitary logic and admissible sets},
  author={Keisler, H Jerome and Knight, Julia F},
  journal={Bulletin of Symbolic Logic},
  volume={10},
  number={1},
  pages={4--36},
  year={2004},
  publisher={Cambridge University Press}
}

@article{HanfIncompactness,
  title={Incompactness in languages with infinitely long expressions},
  author={Hanf, W},
  journal={Fundamenta mathematicae},
  volume={53},
  number={3},
  pages={309--324},
  year={1964}
}

@article{ShelahCompact,
  title={Generalized quantifiers and compact logic},
  author={Shelah, Saharon},
  journal={Transactions of the American Mathematical Society},
  volume={204},
  pages={342--364},
  year={1975}
}

@article{BoneyOmit,
  title={Model theoretic characterizations of large cardinals},
  author={Boney, Will},
  journal={Israel Journal of Mathematics},
  volume={236},
  number={1},
  pages={133--181},
  year={2020},
  publisher={Springer}
}

@article{BoneyOmit2,
  title={Model theoretic characterizations of large cardinals revisited},
  author={Boney, Will and Dimopoulos, Stamatis and Gitman, Victoria and Magidor, Menachem},
  journal={Transactions of the American Mathematical Society},
  volume={377},
  number={10},
  pages={6827--6861},
  year={2024}
}

@article{BendaCompactness,
  title={Compactness for omitting of types},
  author={Benda, Miroslav},
  journal={Annals of Mathematical Logic},
  volume={14},
  number={1},
  pages={39--56},
  year={1978}
}

@article{Barwise1973back,
  title={Back and forth through infinitary logic},
  author={Barwise, Jon and Morley, MD},
  journal={Studies in model theory},
  volume={8},
  pages={5--34},
  year={1973},
  publisher={MAA Studies in Mathematics}
}

@article{Ellentuck1976categoricity,
  title={Categoricity regained},
  author={Ellentuck, Erik},
  journal={The Journal of Symbolic Logic},
  volume={41},
  number={3},
  pages={639--643},
  year={1976},
  publisher={Cambridge University Press}
}

\end{document}